\documentclass[11pt]{amsart}

\usepackage{mystyle}

\title{Regularity for fully non linear equations with non local drift}
\author[H. A. Chang Lara]{H\'ector A. Chang Lara}

\begin{document}

\begin{abstract}
We show H\"older regularity of solutions of elliptic integro-differential equations appearing in stochastic optimal control. The operators are assumed elliptic with respect to a family of linear operators obtained as a convolution with kernels which are not non necessarily symmetric. The non local drift is provided by the odd part of the kernel which is assumed to have an order of scaling smaller than or equal to the even part and larger than or equal to one. In particular we are able to handle the equation $\D^{1/2}u + |Du| = 0$ were the diffusion and drift term have the same order. 
\end{abstract}

\maketitle


\section{Introduction}\label{Sec:Introduction}

In this work we study the regularity of viscosity solutions of non divergence integro-differential equations with no symmetry assumptions to be explained later on this introduction.

A fairly general model to have in mind are equations given by operators of the form,
\begin{align*}
Iu(x) = \inf_{\b\in B}\sup_{\a\in A}L_{K_{\a,\b}}u(x),
\end{align*}
where $\{L_{K_{\a,\b}}\}_{\a\in A,\b\in B}$ is a family of linear integro-differential operators computed by,
\begin{align}\label{eq:lineal}
L_{K_{\a,\b}}u &= \int \d(u,x;y)K_{\a,\b}(y)dy,\\
\d(u,x;y) &= u(x+y) - u(x) - Du(x)\cdot y\chi_{B_1}(y),
\end{align}
with some uniform hypothesis over the kernels $\{K_{\a,\b}\}_{\a\in A,\b\in B}$. The Dirichlet boundary problem,
\begin{align*}
Iu &= 0 \text{ in $\W\ss\R^n$},\\
u &= g \text{ in $\R^n\sm\W$},
\end{align*}
arise in optimal control models driven by purely jump stochastic processes where $K_{\a,\b}(y)dy$ is a family of Levy measures. The description of this and many other types of control problems can be found in the book by M. Soner and W. Fleming, \cite{Soner06}. 

Our techniques follow very closely the classical proof of the Krylov-Safonov regularity for fully non linear second order differential equations, see \cite{Krylov79} and the book \cite{Caffarelli95}. They also recover the classical H\"older regularity when the order of the equation goes to two. We describe next a brief historical account about this type of problems in which our results are also based.

Integro-differential equations have been studied since long time ago with probabilistic and analytic techniques. Many models have been posed that involve integro-differential equations, we already mentioned a broad family of them coming from optimal control. Recently many works have been published where the authors approach the regularity using analytic techniques, our results belong to this class. The paper by G. Barles and C. Imbert \cite{Barles08} revisit the viscosity theory of integro-differential equations and establish a fairly general comparison principle for them. The work of L. Silvestre in \cite{Silvestre06} shows us how to obtain a point estimate at every scale that allows us to get a diminish of oscillation and then the H\"older regularity of the solution. Even thought the proof in \cite{Silvestre06} is simple an elegant it was not powerful enough to recover the second order theory as the order of the equation went to two. In the series of papers \cite{Caffarelli09,Caffarelli10,Caffarelli11} L. Caffarelli and L. Silvestre developed the equivalent Krylov-Safanov Harnack inequality, Cordes-Nirenberg estimates and the Evans-Krylov estimates which allowed them to show that viscosity solutions were classical in a board range of equations, involving concave (or convex) operators. Moreover their estimates remain uniform as the order of the equation goes to two showing that the regularity theory of second order equation can be extended to integro-differential problems.

In all the previous works by L. Caffarelli and L. Silvestre there is always a symmetry assumption in the linear operators. In \cite{Silvestre06} it is mentioned that this assumption was made only to make the exposition cleaner but in \cite{Caffarelli09,Caffarelli10,Caffarelli11} it becomes more relevant as the scalings of such operators may begin to bring additional terms that also have to be controlled. Specifically, if we rescale $u$ by $\tilde u = u(rx)$ and $u$ satisfies $L_Ku = f$, with $L_K$ coming from \eqref{eq:lineal}, then the change of variables formula says that $\tilde u$ satisfies,
\begin{align*}
\int \1\tilde u(x+y) - \tilde u(x) - D\tilde u(x)\cdot y\chi_{B_{r^{-1}}}(y)\2r^nK(ry)dy = f(rx).
\end{align*}
Notice that the characteristic function has changed its support from $B_1$ to $B_{r^{-1}}$. For $r<1$ this brings an additional gradient term if we split $B_{r^{-1}}$ as the disjoint union of $B_1$ and $B_{r^{-1}}\sm B_1$.

A possible way to deal with this problematic term is by assuming that $K$ is even which is the case in \cite{Silvestre06,Caffarelli09,Caffarelli10,Caffarelli11}. Another possibility is by assuming that the odd part of $K$ has enough integrability such that the operator can be splited as,
\begin{align*}
L_Ku &= L_{K_e}u + L_{K_o}u,\\
L_{K_e}u &= \frac{1}{2}\int (u(x+y)+u(x-y)-2u(x))K_e(y)dy,\\
L_{K_o}u &= \frac{1}{2}\int (u(x+y)-u(x-y))K_o(y)dy,
\end{align*}
where $K_e$ and $K_o$ are the even and odd parts of $K$. This is the case of \cite{Davila12} where it is also assumed that the order of the odd part is strictly smaller than the order of the even part. In such work the argument to obtain the H\"older regularity is based on considering the non local drift as a perturbation term to the equation.

In this work we do not assume that such decomposition is possible. Instead we see how to control the new gradient term by considering a larger class of linear operators which already included a gradient term. The regularity in this case is expected because at small scales the total kernel always remain positive which implies in particular that the even part controls the odd part. Our main contribution relies on a modification of the Aleksandrov-Bakelman-Pucci (ABP) estimate from \cite{Caffarelli09}. We consider a barrier (Lemma \ref{lemma:Barrier2}) that allows us to localize the estimate, disregarding the influence of the gradient term, assuming that the right hand side was already localized in a small ball (this is the role of the special function provided in Lemma \ref{lemma:Barrier3}). As a consequence we obtain a point estimate stated in Theorem \ref{thm:Point_estimate} and the H\"older regularity estimates stated in Theorems \ref{thm:holder} and \ref{thm:holder_gradient}.

On the preliminary Section \ref{Sec:Preliminaries} we discuss the definitions of ellipticity and viscosity solutions. We take some time in explaining how to set our hypothesis in order to have enough control at small scales. Also in that section we discuss the comparison principle and the existence of solutions of the Dirichlet problem by Perron's method. Section \ref{Sec:ABP_estimate} covers the ABP estimate which follows the same ideas as in \cite{Caffarelli09} but have also to be adapted in order to consider the gradient term. After having an ABP estimate it is fairly well known how to get Harnack estimates and H\"older regularity, in Section \ref{Sec:Holder_regularity} we go directly to prove a point estimate (or $L^\epsilon$ Lemma, or weak Harnack) and then we just state the regularity Theorems we are allow to get from there.


\subsection{Notation}\label{Subsec:Notation}

\begin{align*}
B_r(x) &= \{y\in\R^n:|y-x|<r\},\\
B_r &= B_r(0),\\
Q_d(x) &= (x_1-d/(2\sqrt{n}),x_1+d/(2\sqrt{n}))\times\ldots\times(x_n-d/(2\sqrt{n}),x_n+d/(2\sqrt{n})),\\
aQ_d(x) &= Q_{ad}(x). 
\end{align*}

For a set $A \ss \R^n$ we denote by $\chi_A$ is characteristic function and $\diam(A)$ its diameter.

For a function $u$ we denote:

\begin{enumerate} 
\item $Du$ its gradient.
\item $u^+ = \max(u,0)$ and $u^- = -\min(u,0)$.
\end{enumerate}

The set of upper (lower) semicontinuous functions in $\W \ss \R^n$ will be denoted by $USC(\W)$ ($LSC(\W)$) respectively.

For a measure $\w$ defined in $\R^n$, the set of functions integrable with respect to $\w$ will be denoted by $L^1(\w)$. In particular we will use $\w_\s(dy) = \min(|y|^{-(n+\s)},1)dy$.

Given a family of linear operators $\cL$ we denote by $\cM^\pm$ the extremal Pucci operators computed at a given function $u$ at the point $x$ by,
\begin{align*}
\cM^+_\cL u(x) = \sup_{L\in\cL}Lu(x),\\
\cM^-_\cL u(x) = \inf_{L\in\cL}Lu(x).
\end{align*}

The letter $C$ will usually denote universal constants that may vary from line to line.


\section{Preliminaries}\label{Sec:Preliminaries}

We start this Section by motivating the hypothesis we will impose on our operator $I$. We start with the ellipticity which will allow to set those conditions not directly on $I$ but on a family of linear operators $\cL$ controlling $I$. The particular observations that we need to keep in mind are a positivity condition which would imply that there is enough diffusion and and scaling condition which will allow to get H\"older regularity from a diminish of oscillation result obtained at scale one.

After discussing the hypothesis on $\cL$ we introduce the viscosity solutions associated with integro-differential equations. These type of solutions are constructed such that they satisfy the maximum principle. Fundamental properties as the stability, existence and uniqueness are discussed in this section too and in most of the cases refer the proof to the already well know works in \cite{Barles08,Barles08-2,Caffarelli09}.

In a future project we also plan to answer some of the basic questions related with viscosity solutions which remain open up to this day. To have such a reference available would have made this section substantially shorter. At the end of this section we discuss what we have in mind related to this issue.

\subsection{Elliptic operators}\label{Subsec:Elliptic_operators}

The first concept we introduce is the one of ellipticity with respect to a family of linear operators. The definition given here is essential the same one as the one in \cite{Caffarelli09}. However the ellipticity notion classically refers to some positivity condition here we use it as a way to control the non linearity by linear operators with no sign condition. Later, when we define the particular family of linear operators we will see a positivity condition which finally justifies the use of the name.

Extremal operators $\cM^-_\cL$ and $\cM^-_\cL$ with respect to a family of linear operators $\cL$, defined over a domain $\W\ss\R^n$, are constructed by,
\begin{align*}
\cM^+_\cL u(x) = \sup_{L\in\cL}Lu(x),\\
\cM^-_\cL u(x) = \inf_{L\in\cL}Lu(x).
\end{align*}

\begin{definition}[Ellipticity]\label{def:Ellipticity}
An operator $I$, defined over a domain $\W\ss\R^n$, is elliptic with respect to a family of linear operators $\cL$ if for every $x\in\W$ and any pair of functions $u$ and $v$ where $Iu(x)$ and $Iv(x)$ can be evaluated then also $Lu(x)$ and $Lv(x)$ are well defined and
\begin{align*}
\cM^-_\cL(u-v)(x) \leq Iu(x) - Iv(x) \leq \cM^+_\cL(u-v)(x).
\end{align*}
\end{definition}

To be concrete we need to fix what are going to be the linear operators we consider. Initially we are interested in non local operators $L_K$, defined in terms of a kernel $K:\R^n\to\R$ in the following way
\begin{align}\label{non_local_linear}
L_K u = \int \1u(x+y) - u(x) - Du(x)\cdot y\chi_{B_1} \2K(y)dy.
\end{align}
For $u$ sufficiently regular at $x$ and bounded this makes sense if,
\begin{align}\label{integrability_condition}
\int \min(1,|y|^2)|K(y)| dy < \8.
\end{align}


\subsubsection{Diffusion versus drift}

The (preservation of the) sign of $K$ around the origin plays an important role. For instance, if $K$ is positive (or negative) $L_Ku(x)$ measures some sort of deviation of $u$ from an average of itself. Specifically,
\begin{align*}
L_Ku(x) = \lim_{\e\to0}\int_{\R^n\sm B_\e} (u(x+y) - u(x))K(y)dy.
\end{align*}

Knowing that $L_Ku = 0$ is a singular version of the mean value property which allowed to show H\"older regularity in \cite{Silvestre06}. Still the same techniques as in \cite{Silvestre06} hold if $K$ is only assumed positive around the origin as the influence of the tail of $K$ can always be pass as a right hand side and dilations looking at smaller scales will spread the positivity of the kernel around the origin, see also Section 14 in \cite{Caffarelli09}.

If $K$ takes positive and negative values around the origin then we can not always expect the solvability of the Dirichlet problem. An example of this phenomena can be taken from a slight modifications of the counterexample in Section 5 in \cite{Barles08-2}.

The previous observations can be also made in terms of the even/odd decomposition of $K = K_e + K_o$ respectively. If $K_e$ preserves a sign, $L_{K_e}u(x)$ measures a deviation of $u(x)$ from an average of itself centered at $x$, this is a diffusive term. On the other hand, $L_{K_o}u(x)$ measures an average of the slopes $u$ centered at $x$, this is a drift term.


\subsubsection{Scaling}

An important ingredient of the regularity theory is scale invariance. A diminish of oscillation estimate works to prove regularity of the solution because they also hold also at smaller scales. We say that the equation $L u = f$ has scale $\s$ if the same type of equation gets preserved by a rescaling of $u$ of the form $\tilde u(x) = r^{-\a}u(rx)$ with $r\in(0,1)$ and $\a \in [0,\s]$. This is the case of the Poisson's equation with $\s=2$, therefore the same estimates one obtains for $u$ at scale one can be applied, with the same or better constants, to any scaling of the form $\tilde u(x) = r^{-\a}u(rx)$, with $r \in (0,1)$ and $\a \in (0,2]$.

By the change of variables formula we can also give an explicit form of how $L_K$ gets rescaled. Consider $L_K u = f$ in $\W$, then the rescaled function $\tilde u(x) = r^{-\a} u(rx)$ satisfies,
\begin{align}\label{scaling}
L_{r^{n+\a}K(r\cdot)} \tilde u + \1r^{\a-1}\int_{B_1\sm B_r} y K(y)dy\2\cdot D\tilde u = f(r\cdot) \text{ in $r^{-1}\W$}.
\end{align}

It comes immediately to our attention the gradient term which depends actually on the odd part of $K$. The particular interest of our work comes when this term persists even at smaller scales. This suggest that from the beginning we should had considered $Lu = L_Ku + b\cdot Du$. Moreover, if we expect to prove that there are classical solutions then we should expect the equation to have order at least one.

The second observation is how the kernel gets rescaled, $\tilde K = r^{n+\a}K(r\cdot)$. For $r$ small this might break any uniform positivity assumption around the origin unless $K$ have a singularity of order at least $-(n+\a)$. On one hand, the integrability condition \eqref{integrability_condition} already imposes an upper bound to the growth of $K$ at the origin so that the previous $\a$ have to be less than two. On the other hand, and as we said before, the order has to be at least one if we expect classical solutions, so that by plugging $\a=1$ above we notice that the singularity of $K$ around the origin has to be of order at least $-(n+1)$, in order to the diffusive term to compete against the drift.

In the case when $\a=\s$, the diffusive term in \eqref{scaling}, which is contained in $L_{r^{n+\s}K(r\cdot)} \tilde u$, might be just bounded as $r\to0$. By the same reasoning as in the previous paragraph we also should impose an upper bound on the drift term in order to not degenerate the equation. This is given if for some $\b>0$,
\begin{align*}
r^{\s-1}\left|\int_{B_1\sm B_r} y K(x,y)dy\right| < \b \text{ uniformly in $r$}.
\end{align*}

A particular scaling invariant operator of order $\s\in(0,2)$ is given by the fractional powers of the laplacian. They can be computed by the following expression (modulus a positive universal constant),
\begin{align*}
\D^\s u(x) = (2-\s)\lim_{\e\to0}\int_{\R^n\sm B_e} \frac{u(x+y)-u(x)}{|y|^{n+\s}}dy.
\end{align*}
The factor $(2-\s)$ becomes relevant in order to extend the definition to $\s=2$. When $\s\to2^-$ the integral becomes divergent however the factor $(2-\s)$ tames the singular behavior recovering the actual laplacian of $u$ modulus a positive universal constant.

In the hypothesis we will introduce in the next part we will see that we are actually bounding our kernels by multiples of the kernel of the fractional laplacian.


\subsubsection{Hypothesis}\label{Subsubsec:Hypothesis}

We resume our hypothesis on the family $\cL$ depending on a family of kernels $\cK$ and some additional parameters $\s,\l,\L$ and $\b$, in the following way:

\begin{enumerate}
\item Every $L \in \cL$ is of the form $L = L_K + b\cdot D$ for $K \in \cK$
\item\label{hypothesis_even} For some fixed $1\leq\s<2$, $\l\leq\L$ and the extremal kernels,
\begin{align*}
K^-(y) &= \l(2-\s)|y|^{-(n+\s)}\chi_{B_1}(y),\\
K^+(y) &= \L(2-\s)|y|^{-(n+\s)},
\end{align*}
we have that for every $K \in \cK$, $K^- \leq K \leq K^+$.
\item\label{hypothesis_odd} For some $\b>0$,
\begin{align*}
r^{\s-1}\left|b + \int_{B_1\sm B_r} y K(y)dy\right| \leq \b \text{ for every $r\in(0,1)$}.
\end{align*}
\end{enumerate}

In this case we can also write,
\begin{align*}
\cM^\pm_\cL u(x) = \cM^\pm_\cK u(x) \pm \b|Du(x)|,
\end{align*}
where,
\begin{align*}
\cM^+_\cK u(x) &= \sup_{K\in\cK}L_Ku(x) = (2-\s) \int \frac{\d^+(u,x_0;y)\L - \d^-(u,x_0;y)\l\chi_{B_1}(y)}{|y|^{n+\s}}dy,\\
\cM^-_\cK u(x) &= \inf_{K\in\cK}L_Ku(x) = (2-\s) \int \frac{\d^+(u,x_0;y)\l\chi_{B_1}(y) - \d^-(u,x_0;y)\L}{|y|^{n+\s}}dy.
\end{align*}
We fix this notation for future references.
 
Notice that (\ref{hypothesis_odd}) gets reduced to $b$ being uniformly bounded in $\cL$ if $\s>1$. An interesting case arises when $\s=1$. A particular family of operator that satisfies all the previous hypothesis are $\cL = \{L_b = \D^{1/2} + b\cdot D: |b|\leq\b\}$. This is one of the simplest cases where our regularity results apply to equations where the drift and the diffusion scale with the same order.

The upper bound $K^+$ is necessary in order to construct barriers and control the solution. The factor $(2-\s)$ allows us to build uniform estimates as $\s$ goes to 2 which is the case of second order equations. We include a Lemma at the end of this part that discusses this aspect.

Finally notice that the family $\cL$ is scaling invariant of order $\s$. Given $\a\in(0,\s]$, $r\in(0,1)$, $L = L_K + b\cdot D \in \cL$ and $u$ such that $Lu = f$ in $\W$ then the rescaling $\tilde u(x) = r^{-\a}u(rx)$ satisfies $\tilde L \tilde u = \tilde f$ in $r^{-1}\W$ where,
\begin{align*}
\tilde f &= r^{\s-\a}f(r\cdot) \text{ (notice that $r^{\s-\a} \leq 1$)},\\
\tilde L &= L_{\tilde K} + \tilde b\cdot D,\\
\tilde K &= r^{n+\s}K(r\cdot),\\
\tilde b &= r^{\s-1}\1 b + \int_{B_1\sm B_r} y K(y)dy\2.
\end{align*}
For $\r \in (0,1)$,
\begin{align*}
&\r^{\s-1}\left|\tilde b  + \r^{\s-1}\int_{B_1\sm B_\r}y\tilde K(y)dy\right|,\\
&= (r\r)^{\s-1}\left|b + \int_{B_1\sm B_r} y K(y)dy + \int_{B_r\sm B_{r\r}}yK(y)dy\right|,\\
&\leq \b
\end{align*}
Then $\tilde L \in \cL$.


As the order $\s$ goes to two, the extremal operators go to a different type of extremal operators that are still comparable with the classical Pucci extremal operators.

\begin{lemma}[Limit as $\s$ goes to 2]\label{lemma:limit_to_the_classic}
Let $u \in C^2(x_0) \cap L^\8$ then,
\begin{align*}
\lim_{\s\to2} \cM^-_\cK u (x_0) &= |\p B_1|\tilde \cM^- D^2u(x_0),\\
&= \int_{\p B_1} \1\1\theta^t D^2 u(x_0)\theta\2^+ \l - \1\theta^t D^2 u(x_0)\theta\2^- \L \2d\theta.
\end{align*}
In particular, for the classical Pucci operators $\cM^-$ defined as in \cite{Caffarelli95},
\begin{align*}
\tilde \cM^- D^2u(x_0) \leq \cM^- D^2u(x_0) \leq C\tilde \cM^- D^2u(x_0).
\end{align*}
for some universal $C$ depending only on $n$.
\end{lemma}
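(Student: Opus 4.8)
The plan is to show that as $\s\to 2$ all the mass of $\cM^-_\cK u(x_0)$ concentrates at the origin, where the behaviour of $u$ is governed by $D^2u(x_0)$; this is forced by the weight $(2-\s)|y|^{-(n+\s)}$. Fix $\r\in(0,1)$ and split $\R^n=B_\r\cup(B_1\sm B_\r)\cup(\R^n\sm B_1)$. On $\R^n\sm B_1$ one has $\d(u,x_0;y)=u(x_0+y)-u(x_0)$, hence $|\d(u,x_0;y)|\leq 2\|u\|_{L^\8}$ and the corresponding part of the integral is bounded by $(2-\s)\,2\|u\|_{L^\8}\L\int_{\R^n\sm B_1}|y|^{-(n+\s)}\,dy=(2-\s)\,C$, which vanishes as $\s\to 2$. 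On $B_1\sm B_\r$ one has $|\d(u,x_0;y)|\leq 2\|u\|_{L^\8}+|Du(x_0)|$, so that part is bounded by $(2-\s)\,C(\l,\L,\|u\|_{L^\8},|Du(x_0)|)\,\r^{-\s}$, which for fixed $\r$ also vanishes as $\s\to 2$. Only $\|u\|_{L^\8}$ and the pointwise vector $Du(x_0)$ are used here, consistently with the fact that $u$ is merely twice differentiable at the single point $x_0$.

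On $B_\r$ I would insert the second order Taylor expansion $\d(u,x_0;y)=\tfrac{1}{2}\,y^tD^2u(x_0)y+R(y)$, where $|R(y)|\leq\e(\r)|y|^2$ on $B_\r$ with $\e(\r)\to0$ as $\r\to0$ (this is exactly the meaning of $u\in C^2(x_0)$). Using $|a^+-b^+|\leq|a-b|$, $|a^--b^-|\leq|a-b|$, $(ta)^\pm=t\,a^\pm$ for $t>0$, writing $M=D^2u(x_0)$ and passing to polar coordinates $y=r\theta$ (so $\int_0^\r r^{1-\s}\,dr=\r^{2-\s}/(2-\s)$), the $B_\r$-part equals
\begin{align*}
\tfrac{1}{2}\,\r^{2-\s}\int_{\p B_1}\1(\theta^tM\theta)^+\l-(\theta^tM\theta)^-\L\2\,d\theta
\end{align*}
up to an error bounded by $(2-\s)\,2\L\,\e(\r)\int_{B_\r}|y|^{2-n-\s}\,dy=2\L|\p B_1|\,\e(\r)\,\r^{2-\s}$. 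Letting first $\s\to 2$ (so $\r^{2-\s}\to1$) and then $\r\to0$ (so $\e(\r)\to0$) kills every auxiliary term and produces the claimed identity, up to the fixed constant used to normalize $\D^\s$; the companion formula for $\cM^+_\cK$ follows verbatim.

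For the comparison with the classical Pucci operator I would use the identity $\l a^+-\L a^-=\inf_{\l\leq c\leq\L}c\,a$ to write
\begin{align*}
\tilde\cM^-(M)=\frac{1}{|\p B_1|}\int_{\p B_1}\inf_{\l\leq c\leq\L}c\,\theta^tM\theta\,d\theta=\inf_{c:\p B_1\to[\l,\L]}\mathrm{tr}(A_cM),\qquad A_c:=\frac{1}{|\p B_1|}\int_{\p B_1}c(\theta)\,\theta\theta^t\,d\theta,
\end{align*}
so $\tilde\cM^-$ is an infimum of linear functionals $M\mapsto\mathrm{tr}(A_cM)$. Since $\int_{\p B_1}\theta\theta^t\,d\theta=\tfrac{|\p B_1|}{n}I$ and $\l\leq c\leq\L$, each $A_c$ satisfies $\tfrac{\l}{n}I\leq A_c\leq\tfrac{\L}{n}I$; hence $\{A_c\}\ss\{A:\tfrac{\l}{n}I\leq A\leq\tfrac{\L}{n}I\}$ and $\tilde\cM^-(M)\geq\cM^-_{\l/n,\L/n}(M)=\tfrac{1}{n}\cM^-_{\l,\L}(M)$, which is the inequality $\cM^-D^2u(x_0)\leq C\,\tilde\cM^-D^2u(x_0)$ with $C=n$. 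For the other inequality one rewrites both operators via $a^\pm=\tfrac{1}{2}(|a|\pm a)$ as a multiple of $\mathrm{tr}(M)$ minus a multiple of a norm term, namely $\tfrac{1}{|\p B_1|}\int_{\p B_1}|\theta^tM\theta|\,d\theta$ for $\tilde\cM^-$ and $\sum_i|e_i(M)|$ for $\cM^-$, and compares these: the estimate $n\,\tfrac{1}{|\p B_1|}\int_{\p B_1}|\theta^tM\theta|\,d\theta\leq\sum_i|e_i(M)|$ is immediate from $|\theta^tM\theta|\leq\sum_i|e_i(M)|\theta_i^2$ and $\int_{\p B_1}\theta_i^2\,d\theta=|\p B_1|/n$, while the reverse comparison follows by $1$-homogeneity, rotation invariance and compactness of the unit sphere of symmetric matrices.

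The only part requiring genuine care is the limit: one must make the annulus and tail contributions disappear using nothing beyond $u\in L^\8$ and the first order data of $u$ at $x_0$, and keep the order of limits ($\s\to2$ first, then $\r\to0$) so that $\r^{2-\s}\to1$. The linear-algebra comparison afterwards is routine.
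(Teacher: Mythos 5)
Your treatment of the limit is essentially the paper's argument (discard the far field, insert the second--order expansion near the origin, use $(2-\s)\int_0^\r r^{1-\s}dr=\r^{2-\s}$), carried out more carefully: the fixed-$\r$ splitting with the modulus $\e(\r)$ and the order of limits ($\s\to2$ first, then $\r\to0$) is exactly what is needed to make rigorous the step where the $o(|y|^2)$ term is dismissed, and your Taylor factor $\tfrac12$ is the correct one --- the limit is really $\tfrac12\int_{\p B_1}\left[(\theta^tD^2u(x_0)\theta)^+\l-(\theta^tD^2u(x_0)\theta)^-\L\right]d\theta$, so the constant in the displayed statement is off by a harmless universal factor $2$ that the paper does not track. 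Your derivation of $\cM^-D^2u(x_0)\le n\,\tilde\cM^-D^2u(x_0)$ from $\l a^+-\L a^-=\inf_{\l\le c\le\L}ca$ and the matrices $A_c$ with $\tfrac{\l}{n}I\le A_c\le\tfrac{\L}{n}I$ is correct, produces the explicit constant $C=n$, and is cleaner than the paper's spherical change of variables; it is also the direction that is actually used afterwards (positivity of the classical Pucci computation in the barrier lemmas transfers to $\tilde\cM^-$ through $\tilde\cM^-\ge\tfrac1n\cM^-$).

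The gap is in ``the other inequality'', $\tilde\cM^-D^2u(x_0)\le\cM^-D^2u(x_0)$. The norm comparison you invoke, $n\,N_1(M)\le\sum_i|e_i(M)|\le C(n)\,N_1(M)$ with $N_1(M)=\frac{1}{|\p B_1|}\int_{\p B_1}|\theta^tM\theta|\,d\theta$, does not yield a comparison of the operators: after the rewriting one has $\tilde\cM^-(M)=\tfrac{\l+\L}{2n}\trace M-\tfrac{\L-\l}{2}N_1(M)$ but $\cM^-(M)=\tfrac{\l+\L}{2}\trace M-\tfrac{\L-\l}{2}\sum_i|e_i(M)|$, so the coefficients of $\trace M$ differ by the factor $n$ and $\trace M$ has no sign; homogeneity/compactness arguments compare norms, not these sign-changing functionals. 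In fact no argument can close this step with the normalization of the statement: $\tilde\cM^-(-I)=-\L$ while $\cM^-(-I)=-n\L$, so $\tilde\cM^-\le\cM^-$ already fails for $n\ge2$, and no constant multiple helps either ($M=I$ forces $c\ge 1/n$ in $\tilde\cM^-\le c\,\cM^-$, while $M=\diag(1,-1)$ in $n=2$, $\l=1$, $\L=2$ gives $\tilde\cM^-=-1/\pi>-c=c\,\cM^-$ for $c=1/2$). The paper's own proof of that inequality trips on the same point (its identity $\trace(MD^2u)=\frac{1}{|\p B_1|}\int_{\p B_1}\theta^tMD^2u\,\theta\,d\theta$ is off by the factor $n$). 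So you should either state and prove a corrected version --- e.g.\ that $\tilde\cM^-$ is sandwiched between $\tfrac1n\cM^-_{\l,\L}$ and $\tfrac1n\cM^+_{\l,\L}$, which follows from your representation $\tilde\cM^-(M)=\inf_c\trace(A_cM)$ --- or note explicitly that only the inequality you did prove, $\cM^-\le n\tilde\cM^-$, is needed in the sequel.
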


\begin{remark}
A similar result also holds for $\tilde \cM^+ D^2u(x_0) = -\tilde \cM^- (-D^2u(x_0))$, namely
\begin{align*}
\lim_{\s\to2} \cM^+_\cK u (x_0) &= |\p B_1|\tilde \cM^+ D^2u(x_0),\\
&= \int_{\p B_1} \1\1\theta^t D^2 u(x_0)\theta\2^+ \L - \1\theta^t D^2 u(x_0)\theta\2^- \l \2d\theta.
\end{align*}
and
\begin{align*}
C\tilde \cM^+ D^2u(x_0) \leq \cM^+ D^2u(x_0) \leq \tilde \cM^+ D^2u(x_0).
\end{align*}
\end{remark}

\begin{proof}
We split the integral as,
\begin{align*}
\cM^-_\cK u (x_0) &= (2-\s) \int \frac{\d^+(u,x_0;y)\l\chi_{B_1}(y) - \d^-(u,x_0;y)\L}{|y|^{n+\s}}dy,\\
&= (2-\s) \int_{B_1} + (2-\s)\int_{\R^n \sm B_1}.
\end{align*}
The last term goes to zero as $\s$ goes to 2 because the integral is uniformly bounded from the assumption that $u \in L^\8$. We use now that for $y$ small, $\d^\pm(u,x_0;y) = (y^tD^2u(x_0)y)^\pm + o(|y|^2)$ in the first term. As $\s$ goes to 2 the lower order term vanishes so that we only have to consider,
\begin{align*}
&(2-\s)\int_{B_1} \frac{(y^tD^2u(x_0)y)^+\l - (y^tD^2u(x_0)y)^-\L}{|y|^{n+\s}}dy,\\
&= (2-\s) \int_0^1 r^{n-1} dr \int_{\p B_1} \1(\theta^tD^2u(x_0)\theta)^+\l - (\theta^tD^2u(x_0)\theta)^-\L\2 r^{-(n+\s)+2}d\theta,\\
&= |\p B_1|\tilde\cM^- D^2u(x_0).
\end{align*}

For the last part of the Lemma we check first that $\tilde \cM^- D^2u(x_0) \leq \cM^- D^2u(x_0)$. Notice that for any positive definitive symmetric matrix $M$ such that $\l Id \leq M \leq \L Id$,
\begin{align*}
\trace(MD^2u(x_0)) &= \frac{1}{|\p B_1|}\int_{\p B_1} \theta^t MD^2u(x_0) \theta d\theta,\\
&\geq \frac{1}{|\p B_1|} \int_{\p B_1} \1(\theta^tD^2u(x_0)\theta)^+\l - (\theta^tD^2u(x_0)\theta)^-\L\2 d\theta,\\
&= \tilde\cM^- D^2u(x_0).
\end{align*}
We conclude then by taking the infimum on the left hand side above.

For the other inequality denote $M = D^2u(x_0)$ and decompose $\R^n$ in an appropriated system of coordinates $\R^n = \R^{n_1}\times\R^{n_2}$ such that $M$ is positive over $\R^{n_1}$ and negative over $\R^{n_2}$. If $n_1$ or $n_2$ is zero then we get that $\tilde\cM^\pm M = \cM^\pm M$, therefore we can assume that $n_1$ and $n_2$ are at least one.

Consider then the change of variables $\zeta:\p B_1^{n_1}\times \p B_1^{n_2}\times [0,\pi/2]\to \p B_1^n$ (where $\p B_1^m$ is the unit sphere in $\R^m$) given by,
\begin{align*}
\zeta(\theta_1,\theta_2,\theta) = (\theta_1\cos\theta, \theta_2\sin\theta).
\end{align*}
Its determinant depends only on $\theta$ and is given by
\begin{align*}
\phi_{n_1,n_2}(\theta) = \cos^{n_1-1}\theta\sin^{n_1-1}\theta\sqrt{1+\cos^2\theta\sin^2\theta}.
\end{align*}
Then,
\begin{align*}
\tilde\cM^- M &= \frac{1}{|\p B_1^n|}\int_{\p B_1^n} \1\1\theta^tM\theta\2^+ \l - \1\theta^t M\theta\2^- \L \2d\theta,\\
&= \frac{1}{|\p B_1^n|}\int_0^{\pi/2} \left(\l\cos^2\theta\int_{\p B_1^{n_1}}\theta_1^tM\theta_1d\theta_1\right.,\\
&\left.{} + \L\sin^2\theta\int_{\p B_1^{n_2}}\theta_2^tM\theta_2d\theta_2\right)\phi_{n_1,n_2}(\theta)d\theta,\\
&= \frac{|\p B_1^{n_1}||\p B_1^{n_2}|}{|\p B_1^n|}\int_0^{\pi/2} \1\l \trace M^- \cos^2\theta - \L \trace M^+ \sin^2\theta \2\phi_{n_1,n_2}(\theta)d\theta,\\
&= C_{n_1,n_2}(\l \trace M^- - \L \trace M^+).
\end{align*}
With $C_{n_1,n_2}$ positive and bounded. We conclude by considering all possible partitions of $n = n_1+n_2$ with $n_1,n_2 \in \N$, each one of them at least one.
\end{proof}


\subsection{Formal definition of our operators}\label{Subsec:Formal_definition_of_our_operators}

For every $L \in \cL$ (with the hypothesis from the previous part) $Lu(x)$ can be defined for $u \in C^{1,1}(x) \cap L^1(\w_\s)$ where $\w_\s(dy) = \min(|y|^{-(n+\s)},1)dy$. Moreover $Lu$ is continuous in $B_r(x_0)$ if $u \in C^2(B_r(x_0)) \cap L^1(\w_\s)$. Going back to the non linearity $I$, recall that if $Iu(x)$ is well defined then $Lu(x)$ needs to be well defined too for every $L \in \cL$, given that $I$ is elliptic with respect to $\cL$. It is then reasonable to ask at least $C^{1,1}(x)$ regularity and $L^1(\w_\s)$ integrability in order to evaluate $Iu(x)$. Stability properties of $I$ depend on $Iu$ being continuous when $u$ is sufficiently regular, in this case $C^2(B_r(x_0)) \cap L^1(\w_\s)$ seems to be a reasonable minimum requirement. The following definition comes from \cite{Caffarelli09}.

\begin{definition}[Elliptic continuous operators]\label{def:operators}
$I$ is a continuous operator, elliptic with respect to $\cL = \cL(\cK)$ in $\W$ if,
\begin{enumerate}
 \item $I$ is an elliptic operator with respect to $\cL$ in $\W$,
 \item $Iu(x)$ is well defined for any $u \in C^{1,1}(x) \cap L^1(\w_\s)$ and $x\in\W$,
 \item $Iu$ is continuous in $B_r(x_0)$ for any $u \in C^2(B_r(x_0)) \cap L^1(\w_\s)$ and $B_r(x_0)\ss\W$.
\end{enumerate}
\end{definition}

Translation invariant operators should be such that $Iu$ evaluated at $x$ computes the same number as $I$ evaluated at the translated function $u_z = u(\cdot-z)$ at the point $(x+z)$.


\subsubsection{Examples}

The extremal operators $\cM^\pm_\cL$ satisfy the first two requirements in the Definition \ref{def:operators} and are also invariant by translations. More generally, the same can be said for any inf-sup (or sup-inf) combination of operators in $\cL$, i.e.
\begin{align}\label{inf_sub_operators}
I = \inf_{\a\in A}\sup_{\b\in B} L_{K_{\a,\b}} + b_{\a,\b}\cdot D
\end{align}
An adaptation of the proof of Lemma 4.2 in \cite{Caffarelli09} shows also that all the previous examples 
give also continuous operators.

We also see from here that translation invariant operators $I$ which are elliptic with respect to $\cL$, such that $Iu(x)$ is well defined for any $u \in C^{1,1}(x) \cap L^1(\w_\t)$ is automatically continuous. Indeed, the ellipticity assumption can be evaluated for $u \in C^2(B_r(x_0)) \cap L^1(\w_\t)$ and the translation $u_z = u(z+\cdot) \in C^2(B_r(x_0-z)) \cap L^1(\w_\t)$,
\begin{align*}
Iu(x+z) - Iu(x) &= (Iu_z - Iu)(x),\\
&\in [\cM^-_\cL(u_z-u)(x), \cM^+_\cL(u_z-u)(x)].
\end{align*}
which goes to zero as $z$ approaches $x$. This can be justified by the explicit computations of $\cM^\pm_\cL$.


\subsection{Viscosity solutions}

\begin{definition}[Test functions]\label{def:Test_functions}
Given $\s\in[1,2)$, a test function at $x_0 \in \R^n$ is defined as a pair $(\varphi, B_r(x_0))$, for $r>0$, such that $\varphi \in C^2(B_r(x_0)) \cap L^1(\w_\t)$.
\end{definition}

Usually we omit the domain $B_r(x_0)$ and denote the test function just by $\varphi$.

A test function $u_{(\varphi, B_r(x_0))} \in C^2(B_r(x_0)) \cap L^1(\w_\s)$ can always be constructed from $\varphi \in C^2(B_r(x_0))$ and $u \in L^1(\w_\t)$ in the following way,
\begin{align*}
u_{(\varphi, B_r(x_0))} = \begin{cases}
\varphi &\text{ in $B_r(x_0)$},\\
u &\text{ in $\R^n\sm B_r(x_0)$}.
\end{cases}
\end{align*}
We fix this definition for future references.

\begin{definition}[Viscosity solutions]\label{def:Viscosity_solutions}
Given a non local operator $I$ and a function $f:\W\to\R$ we say that $u \in LSC(\W) \cap L^1(\w_\s)$ is a supersolution (subsolution) to
\begin{align*}
Iu \leq (\geq) f \text{ in the viscosity sense in $\W$},
\end{align*}
if for every test function $(\varphi, B_r(x_0))$ that touches $u$ from below (above), i.e.
\begin{enumerate}
\item $x_0 \in \W$,
\item $\varphi(x_0) = u(x_0)$,
\item $\varphi \leq (\geq) u$ in $\R^n$,
\end{enumerate}
then $I\varphi(x_0) \leq (\geq) f(x_0)$.

Additionally, $u \in C(\W) \cap L^1(\w_\t)$ is a viscosity solution to $Iu = f$ in $\W$ if it is simultaneously a subsolution and a supersolution.
\end{definition}

Notice that for $u \in LSC(x_0) \cap L^1(\w_\s)$ and any $\varphi \in C^2(B_r(x_0))$ such that,
\begin{enumerate}
\item $x_0 \in \W$,
\item $\varphi(x_0) = u(x_0)$,
\item $\varphi \leq u$ in $B_r(x_0)$,
\end{enumerate}
we can always construct a test function touching $u$ from below by considering $u_{(\varphi, B_r(x_0))} \in C^2(B_r(x_0)) \cap L^1(\w_\s)$. In this sense we also call $\varphi \in C^2(B_r(x_0))$ a test function even thought we actually plug $u_{(\varphi, B_r(x_0))}$ into the equation.


\subsection{Qualitative properties}\label{Subsec:Qualitative_properties}

In this section we discuss some of the fundamental properties of the viscosity solution theory. Lemma \ref{lemma:Principal_value_lemma} says that for viscosity solutions of equations involving operators of inf-sup type \eqref{inf_sub_operators}, having contact with a test functions from one side, forces enough regularity to evaluate the same operator in a principal value sense. We get as a corollary that regular enough viscosity solutions are also classical. The results after that are the stability and the comparison principle necessary in order to establish existence and uniqueness of the Dirichlet problem by Perron's method.


\subsubsection{A principal value lemma}\label{Subsubsec:A_principal_value_lemma}

It was already noticed in \cite{Caffarelli09} that for viscosity solutions $u$ of operators $I$, obtained as an inf-sup combination of non local, linear operators, one gets that having contact by a test function from one side forces some regularity on the other side too. See also Section 1.3 in \cite{Barles08}. This turns out to be a very useful tool as we can (almost) evaluate $I$ on $u$ having to recur to the test functions in a minimal way.

\begin{lemma}\label{lemma:Principal_value_lemma}
Let $I$ of the form \eqref{inf_sub_operators} such that $\{L_{\a,\b}\}_{\a\in A, \b\in B} \ss \cL$. Given a viscosity super solution $u \in LSC(\W) \cap L^1(\w_\s)$ of $Iu \leq f$ in $\W$ and $\varphi \in C^2(B_r(x_0))$ such that touches $u$ from below at $x_0 \in \W$, then for each $K\in\cK$ the following limit is well defined,
\begin{align*}
L_K(u,D\varphi(x_0))(x_0) := \lim_{\e\to0} \int_{\R^n \sm B_\e} \d(u,D\varphi(x_0),x_0;y)K(y)dy,
\end{align*}
where
\begin{align*}
\d(u,p,x_0;y) = u(x_0+y) - u(x_0) - p\cdot y\chi_{B_1}(y).
\end{align*}
Moreover,
\begin{align*}
\inf_{\a\in A} \sup_{\b\in B} L_{\a,\b}(u,D\varphi(x_0))(x_0) \leq f(x_0),  
\end{align*}
where,
\begin{align*}
L_{\a,\b}(u,D\varphi(x_0))(x_0) = L_{K_{\a,\b}}(u,D\varphi(x_0))(x_0) + b_{\a,\b}\cdot D\varphi(x_0).  
\end{align*}
\end{lemma}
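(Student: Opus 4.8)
The plan is to feed the localized test functions $v_\r:=u_{(\varphi,B_\r(x_0))}$, for small $\r>0$, into the supersolution inequality and then let $\r\to0$. Since $v_\r\in C^2(B_\r(x_0))\cap L^1(\w_\s)$, $v_\r\le u$ in $\R^n$ and $v_\r(x_0)=u(x_0)$, each $v_\r$ is an admissible test function touching $u$ from below at $x_0$, so Definition \ref{def:Viscosity_solutions} gives $Iv_\r(x_0)\le f(x_0)$, that is, using $Dv_\r(x_0)=D\varphi(x_0)$,
\begin{align*}
\inf_\a\sup_\b\1L_{K_{\a,\b}}v_\r(x_0)+b_{\a,\b}\cdot D\varphi(x_0)\2\le f(x_0).
\end{align*}
Splitting the integral for $L_{K_{\a,\b}}v_\r(x_0)$ at $\p B_\r$, its part over $B_\r$ is $\int_{B_\r}\d(\varphi,x_0;y)K_{\a,\b}(y)dy$, which is $O(\r^{2-\s})$ uniformly in $\a,\b$ because $\varphi\in C^2$ and $K_{\a,\b}\le K^+$, while its part over $\R^n\sm B_\r$ is $\int_{\R^n\sm B_\r}\d(u,D\varphi(x_0),x_0;y)K_{\a,\b}(y)dy$.

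Next I would extract two a priori bounds on $\d(u,D\varphi(x_0),x_0;\cdot)$ near the origin. From touching below and $\varphi\in C^2$ we get $\d(u,D\varphi(x_0),x_0;y)\ge\d(\varphi,x_0;y)\ge-C|y|^2$ for $|y|<r$, hence $\d^-(u,D\varphi(x_0),x_0;y)\le C|y|^2$ there; together with $u\in L^1(\w_\s)$ and $K\le K^+$ this shows $\int\d^-(u,D\varphi(x_0),x_0;y)K^+(y)dy<\8$, with the tail over $B_\r$ tending to $0$ uniformly. The crux is the positive part, and this is where hypothesis (\ref{hypothesis_even}) is used: since $K_{\a,\b}\ge K^-$ for all $\a,\b$ (and $|b_{\a,\b}|$ is bounded by (\ref{hypothesis_odd})), every term in the inf--sup, hence $Iv_\r(x_0)$, is bounded below---uniformly in $\a,\b$---by $\int_{B_1\sm B_\r}\d^+(u,D\varphi(x_0),x_0;y)K^-(y)dy$ minus a constant independent of $\r$. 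Comparing this with $Iv_\r(x_0)\le f(x_0)$ and letting $\r\to0$ by monotone convergence yields the key estimate
\begin{align*}
\int_{B_1}\d^+(u,D\varphi(x_0),x_0;y)|y|^{-(n+\s)}dy<\8.
\end{align*}

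With this estimate, for every $K\in\cK$ the function $y\mapsto\d(u,D\varphi(x_0),x_0;y)K(y)$ is genuinely in $L^1(\R^n)$ (using $u\in L^1(\w_\s)$ for the tail), so $L_K(u,D\varphi(x_0))(x_0)$ exists as an absolutely convergent integral and agrees with the stated limit over $\R^n\sm B_\e$; this is the first assertion. For the inequality, observe that, since $v_\r$ and $u$ differ only inside $B_\r$ (where they share the value and gradient at $x_0$),
\begin{align*}
0\le L_{K_{\a,\b}}(u,D\varphi(x_0))(x_0)-L_{K_{\a,\b}}v_\r(x_0)=\int_{B_\r}(u-\varphi)(x_0+y)K_{\a,\b}(y)dy\le\w(\r),
\end{align*}
where $\w(\r):=\int_{B_\r}\1\d^+(u,D\varphi(x_0),x_0;y)+C|y|^2\2K^+(y)dy\to0$ as $\r\to0$, uniformly in $\a,\b$. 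Therefore $\inf_\a\sup_\b L_{\a,\b}(u,D\varphi(x_0))(x_0)\le Iv_\r(x_0)+\w(\r)\le f(x_0)+\w(\r)$, and $\r\to0$ concludes.

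The one genuinely non-routine step is the extraction of $\int_{B_1}\d^+(u,D\varphi(x_0),x_0;y)|y|^{-(n+\s)}dy<\8$ from the supersolution inequality: precisely there the uniform ellipticity from below ($K_{\a,\b}\ge K^-$, the ``enough diffusion at small scales'' of Section \ref{Sec:Preliminaries}) is what controls the upward oscillation of the merely lower semicontinuous $u$; without it the limit defining $L_K(u,D\varphi(x_0))(x_0)$ could be $+\8$. Everything else---admissibility of the $v_\r$, the uniformity in $(\a,\b)$ of the error terms so that the inf--sup passes to the limit, and the elementary integrability estimates against $K^\pm$ and $\w_\s$---is bookkeeping.
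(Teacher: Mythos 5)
Your argument is correct and follows essentially the same route as the paper: you plug the truncated test functions $u_{(\varphi,B_\r(x_0))}$ into the supersolution inequality, use the extremal bounds $K^-\leq K\leq K^+$ (and $|b_{\a,\b}|\leq\b$) to extract integrability of $\d^-(u,D\varphi(x_0),x_0)K^+$ and of $\d^+(u,D\varphi(x_0),x_0)K^-$, and then pass the inf--sup to the limit via an error $\int_{B_\r}(u-\varphi)(x_0+\cdot)K^+$ that is uniform in $(\a,\b)$. The only deviations are cosmetic --- you obtain the key bound by a uniform-in-$\r$ estimate plus monotone convergence where the paper invokes Fatou's Lemma, and you get the integrability of $\d^-K^+$ directly from the $C^2$ touching rather than through the approximations $u_\r$.
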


\begin{remark}
In the particular case that $D\varphi(x_0) = 0$ we can also write,
\begin{align*}
L_K(u,0)(x_0) := \lim_{\e\to0} \int_{\R^n \sm B_\e} (u(x_0+y)-u(x_0))K(y)dy.
\end{align*}
\end{remark}

\begin{proof}
For $\r \in (0,r]$, Consider the test function $u_\r = u_{(\varphi, B_\r(x_0))}$ touching $u$ from below. Notice that $\d(u_\r,x_0)$ increases to $\d(u,D\varphi(x_0),x_0)$ as $\r$ goes to zero. We have from the definition of viscosity solutions that,
\begin{align*}
 f(x_0) &\geq Iu_\r(x_0),\\
 &\geq \cM^-_{\cK}u_\r (x_0) - \b|D\varphi(x_0)|,\\
 &= \int \d^+(u_\r,x_0;y)K^-(y) - \d^-(u_\r,x_0;y)K^+(y)dy - \b|D\varphi(x_0)|.
\end{align*}
From $\d(u,D\varphi(x_0),x_0) \geq \d(u_\r,x_0)$ and the non negativity of $K^+$ we have
\begin{align*}
\d^-(u_\r,x_0)K^+ \geq \d^-(u,D\varphi(x_0),x_0)K^+ \geq 0,
\end{align*}
therefore $\d^-(u,D\varphi(x_0),x_0)K^+$ is integrable. Moreover, by monotone convergence
\begin{align*}
 \lim_{\r\to 0} \int \d^-(u_\r,x_0;y)K^+(y)dy = \int \d^-(u,D\varphi(x_0),x_0;y)K^+(y)dy.
\end{align*}
Now we use Fatou's Lemma, sending $\r$ to zero, on the inequality,
\begin{align*}
 f(x_0) + \b|D\varphi(x_0)| + \int \d^-(u_\r,x_0;y)K^+(y)dy &\geq \int \d^+(u_\r,x_0;y)K^-(y),
\end{align*}
to get
\begin{align*}
 &f(x_0) + \b|D\varphi(x_0)| + \int \d^-(u,D\varphi(x_0),x_0;y)K^+(y)dy,\\
 &=\lim_{\r\to0}\1f(x_0) + \b|D\varphi(x_0)| + \int \d^-(u_\r,x_0;y)K^+(y)dy\2,\\
 &\geq \liminf_{\r\to0} \int \d^+(u_\r,x_0;y)K^-(y)dy,\\
 &\geq \int \d^+(u,D\varphi(x_0),x_0;y)K^-(y)dy.
\end{align*}
(Notice that $\d^+(u_\r,x_0)$ doesn't necessarily converge to $\d^+(u,D\varphi(x_0),x_0)$ in $L^1(K^-)$ as $\r\to0$).

This shows that for every $K^- \leq K \leq K^+$ the function $\d(u,D\varphi(x_0),x_0)K$ is integrable, $L_K(u,D\varphi(x_0))(x_0)$ is well defined and also any inf-sup combination of $L_{K_{\a,\b}}(u,D\varphi)(x_0)$ with $K_{\a,\b}\in\cK$.

Moreover $(u-\varphi)(x_0+\cdot)K^+ = (\d(u,D\varphi(x_0),x_0)-\d(u_r,x_0))K^+$ is integrable and then for every $K \in\cK$ and $\r \in (0,r]$
\begin{align*}
&\int_{B_\r(x_0)} |(u-\varphi)(x_0+y)|K^+(y)dy,\\
&= \int_{B_\r(x_0)} |\d(u,D\varphi(x_0),x_0;y)-\d(u_\r,x_0;y)|K^+(y)dy,\\
&\geq |L_K(u,D\varphi(x_0))(x_0) - L_K u_\r(x_0)|,
\end{align*}
which, by absolute continuity, can be made arbitrarily small by taking $\r$ sufficiently small, independently of $K\in\cK$. Given $\e>0$ there exists some sufficiently small $\r>0$ such that,
\begin{align*}
&L_{K_{\a,\b}}(u,D\varphi(x_0))(x_0) + b_{\a,\b}\cdot D\varphi(x_0) - f(x_0),\\
&\leq L_{K_{\a,\b}}u_\r(x_0) + \e + b_{\a,\b}\cdot D\varphi(x_0) - f(x_0),\\
&\leq \e.
\end{align*}
Taking the supremum and the infimum and finally sending $\e$ to zero we conclude the result.
\end{proof}

This implies in particular that viscosity solutions that are in $C^{1,\s-1+\e}(\W)\cap L^1(\w_\s)$ (for some $\e>0$ arbitrarily small) are actually classical solution.

\begin{corollary}
Let $f$ be a continuos function and $I$ an operator of the form \eqref{inf_sub_operators} such that $\{L_{\a,\b}\}_{\a\in A, \b\in B} \ss \cL$. Given $\e\in(0,2-\s)$ and a viscosity super solution $u \in C^{1,\s-1+\e}(\W) \cap L^1(\w_\s)$ of $Iu \leq f$ in $\W$ then $u$ also satisfies the inequality classically.
\end{corollary}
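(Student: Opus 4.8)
The plan is to reduce the statement, by means of Lemma~\ref{lemma:Principal_value_lemma}, to a statement about the classical (principal value) evaluation of $I$. The only genuine difficulty is that a $C^{1,\s-1+\e}$ function with $\s-1+\e<1$ (recall $\e<2-\s$) need not be touched from below at a prescribed point by a $C^2$ function, so the lemma cannot be applied directly at the point of interest. I would get around this with a regularized and localized barrier that manufactures genuine interior contact points accumulating at $x_0$, and then conclude by continuity.

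First I would check that $Iu$ is classically well defined and continuous in $\W$. Fix $x_0\in\W$ and $\r>0$ with $\overline{B_\r(x_0)}\ss\W$, and put $C_0:=[Du]_{C^{\s-1+\e}(\overline{B_\r(x_0)})}$. For $x,x+y\in B_\r(x_0)$ the pointwise Taylor estimate gives $|u(x+y)-u(x)-Du(x)\cdot y|\le C_0|y|^{\s+\e}$, and since $\int_{B_\r}|y|^{\s+\e}|y|^{-(n+\s)}\,dy=|\p B_1|\,\r^\e/\e<\8$ --- this is exactly where $\s-1+\e>\s-1$ enters --- while the tail is controlled by $u\in L^1(\w_\s)$, with $|K_{\a,\b}|$ dominated by a fixed multiple of $|y|^{-(n+\s)}$ and $|b_{\a,\b}|\le\b$ (hypotheses \ref{hypothesis_even}--\ref{hypothesis_odd}), each $L_{K_{\a,\b}}u(x)$ is an absolutely convergent integral and $Iu(x)=\inf_{\a\in A}\sup_{\b\in B}(L_{K_{\a,\b}}u(x)+b_{\a,\b}\cdot Du(x))$ is finite. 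Continuity of $Iu$ near $x_0$ then follows as in the discussion after Definition~\ref{def:operators}: $I$ is translation invariant, so $Iu(x_0+z)-Iu(x_0)=(Iu_z-Iu)(x_0)$ lies between $\cM^\mp_\cL(u_z-u)(x_0)$, and these tend to $0$ as $z\to0$ since $u_z-u\to0$ both in $C^{1,\s-1+\e}$ on a fixed ball and in $L^1(\w_\s)$, which bounds $\cM^\pm_\cL(u_z-u)(x_0)$ through the explicit formulas for $\cM^\pm_\cK$ and the term $\pm\b|D(u_z-u)(x_0)|$.

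Next I would set up the barrier. With $\ell(x)=u(x_0)+Du(x_0)\cdot(x-x_0)$ and $h\in(0,1)$, let
\begin{align*}
\Phi_h(x)=\ell(x)-C_0\left(|x-x_0|^2+h^2\right)^{\frac{\s+\e}{2}}-|x-x_0|^2,
\end{align*}
which is smooth. Because $(|x-x_0|^2+h^2)^{(\s+\e)/2}\ge|x-x_0|^{\s+\e}$ and $|u(x)-\ell(x)|\le C_0|x-x_0|^{\s+\e}$, we get $u-\Phi_h\ge|x-x_0|^2\ge0$ on $\overline{B_\r(x_0)}$, with $(u-\Phi_h)(x_0)=C_0h^{\s+\e}$ and $u-\Phi_h\ge\r^2$ on $\p B_\r(x_0)$. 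Hence, as soon as $C_0h^{\s+\e}<\r^2$, the minimum of $u-\Phi_h$ over $\overline{B_\r(x_0)}$ is attained at an interior point $x_h$, and $|x_h-x_0|^2\le(u-\Phi_h)(x_h)\le C_0h^{\s+\e}$, so $x_h\to x_0$ as $h\to0$. Then $\varphi_h:=\Phi_h+(u-\Phi_h)(x_h)$ is smooth, lies below $u$ on $\overline{B_\r(x_0)}$, and equals $u$ at $x_h$; glued with $u$ outside a small ball around $x_h$ (the construction of $u_{(\varphi,B)}$) it is an admissible test function touching $u$ from below at $x_h$. Since $u\in C^1$ and the contact is interior, $D\varphi_h(x_h)=Du(x_h)$, so Lemma~\ref{lemma:Principal_value_lemma} applied at $x_h$ gives
\begin{align*}
\inf_{\a\in A}\sup_{\b\in B}\left(L_{K_{\a,\b}}(u,Du(x_h))(x_h)+b_{\a,\b}\cdot Du(x_h)\right)\le f(x_h).
\end{align*}
But $L_{K_{\a,\b}}(u,Du(x_h))(x_h)$ is the principal value of the integrand defining the absolutely convergent $L_{K_{\a,\b}}u(x_h)$, hence the left side equals $Iu(x_h)$. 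Letting $h\to0$ and using $x_h\to x_0$ with the continuity of $Iu$ and of $f$ yields $Iu(x_0)\le f(x_0)$; as $x_0\in\W$ is arbitrary, $u$ is a classical supersolution.

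I expect the main obstacle to be precisely the point the barrier addresses: unlike the $C^{1,1}$ case, where a paraboloid touches $u$ from below, no $C^2$ function touches a $C^{1,\s-1+\e}$ function of exponent $<1$ from below at a prescribed point, so the lemma cannot be invoked at $x_0$ itself; the regularization $h$ together with the confining term $-|x-x_0|^2$ are what produce a genuine interior contact point $x_h\to x_0$, and the compatibility of this scheme with the non-local equation hinges on $\int_{B_1}|y|^{\s+\e}|y|^{-(n+\s)}\,dy<\8$, i.e. on the gain $\e>0$ over the scaling exponent $\s-1$ in the regularity hypothesis.
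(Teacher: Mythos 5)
Your proposal is correct and takes essentially the same route as the paper: show that $Iu$ is classically defined and continuous, produce interior contact points where Lemma \ref{lemma:Principal_value_lemma} applies (so the principal values coincide with the absolutely convergent classical evaluation of $Iu$ thanks to the $C^{1,\s-1+\e}$ regularity), and conclude by continuity of $Iu$ and $f$. The only difference is cosmetic: the paper gets a dense set of contact points by sliding a simple paraboloid and invokes the continuity of $Iu$ from Lemma 4.2 in \cite{Caffarelli09}, whereas you build a regularized barrier whose contact points $x_h$ converge to the prescribed $x_0$ and spell out the translation-invariance continuity argument.
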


\begin{proof}
By the proof of Lemma 4.2 in \cite{Caffarelli09}, $\tilde f = Iu$ is a continuous function so we need to show that $\tilde f \leq f$. By the previous Lemma $\tilde f \leq f$ at every point where $u$ can be touched from below. We will conclude by showing that such set of contact points is dense in $\W$.

Let $B_r(x_0) \ss \W$, by the modulus of continuity of $u$ there exists some $\d$ such that $u \geq u(x_0) + \d$ at $\p B_r(x_0)$. Consider then the following test function,
\begin{align*}
\varphi = u(x_0) - (\d/r)|x-x_0|^2 - \sup_{B_r(x_0)}(\varphi-u),
\end{align*}
in order to conclude that there exists a point in $B_r(x_0)$ where $u$ can be touched from below by a $C^2$ function.
\end{proof}


\subsubsection{Stability}\label{Subsubsec:Stability}

The following stability result can be proven as in Section 4 \cite{Caffarelli09}. See also Section 3 in \cite{Barles08} and Section 4 in \cite{Caffarelli11} for more general results where the operator $I$ is also approximated by a sequence of operators $\{I_k\}$ in a suitable sense.

\begin{lemma}[Stability]\label{lemma:Stability}
Let $\{f_k\}$ be sequence of continuos functions and $\{I_k\}$ a sequence of elliptic operators with respect to $\cL$. Let $u_k \in LSC(\W) \cap L^1(\w_\s)$ be a sequence of functions such that
\begin{enumerate}
\item $Iu_k \leq f_k$ in the viscosity sense in $\W$.
\item $u_k \to u$ in the $\G$ sense in $\W$.
\item $u_k \to u$ in $L^1(\w_\s)$.
\item $f_k \to f$ locally uniformly in $\W$.
\item $|u_k(x)|\leq C$ for every $x\in\W$.
\end{enumerate}
Then $Iu \leq f$ in the viscosity sense in $\W$.
\end{lemma}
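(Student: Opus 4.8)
The plan is to prove the stability Lemma~\ref{lemma:Stability} by the standard viscosity argument: take a smooth test function $\varphi$ touching the limit $u$ strictly from below at a point $x_0\in\W$, produce from it perturbed test functions that touch $u_k$ from below at nearby points $x_k\to x_0$, feed them into the hypotheses $I_ku_k\le f_k$, and pass to the limit. The main subtlety, compared to the local case, is that the nonlocal operator sees the whole function, not just the $C^2$-jet at the contact point, so one has to be careful that the tails converge and that the test function for $u_k$ can be taken to agree with $u_k$ itself away from a small ball.

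First I would set up the contact points. By subtracting a quadratic I may assume $\varphi$ touches $u$ strictly from below at $x_0$ on a ball $\overline{B_r(x_0)}\ss\W$, i.e. $u-\varphi\ge\eta>0$ on $\partial B_\rho(x_0)$ for some $\rho\le r$. Using the $\Gamma$-convergence hypothesis (lower semicontinuous convergence / sup-convolution-type convergence as used in Section~4 of \cite{Caffarelli09}), the functions $u_k-\varphi$ attain a minimum over $\overline{B_\rho(x_0)}$ at some $x_k$, with $x_k\to x_0$ and $u_k(x_k)\to u(x_0)=\varphi(x_0)$; in particular, for $k$ large $x_k$ is interior and the minimum value $m_k:=\min(u_k-\varphi)\to 0$. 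Set $\varphi_k:=\varphi+m_k$, a $C^2$ function on $B_\rho(x_0)$ touching $u_k$ from below at $x_k$ in $B_\rho(x_0)$, and let $w_k:=(u_k)_{(\varphi_k,B_\rho(x_k'))}$ be the associated test function from the construction preceding Definition~\ref{def:Viscosity_solutions} (using a slightly smaller ball centered at $x_k$ so that $\varphi_k\le u_k$ there). Then $I_ku_k\le f_k$ in the viscosity sense gives $I_kw_k(x_k)\le f_k(x_k)$.

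Next I would pass to the limit in $I_kw_k(x_k)\le f_k(x_k)$. Write $w_k=\varphi+m_k+(u_k-\varphi-m_k)\chi_{\R^n\sm B_\rho(x_k')}$; one checks that $w_k\to w:=(u)_{(\varphi,B_\rho(x_0))}$ both pointwise near $x_0$ (from the $C^2$ piece and $x_k\to x_0$, $m_k\to0$) and in $L^1(\w_\s)$ (from hypothesis~(3), since outside $B_\rho$ the functions $w_k$ equal $u_k$ up to the small correction, and near $x_0$ the $C^2$ jets converge uniformly). Using the explicit formula for $\cM^\pm_\cL$ together with the ellipticity of $I_k$ with respect to $\cL$, one has $|I_kw_k(x_k)-I_kw(x_k)|\le \cM^+_\cL((w_k-w)^+)(x_k)+\cM^-_\cL(-(w_k-w)^-)(x_k)$-type bounds, so this difference $\to0$; a separate equicontinuity estimate (again via the $\cM^\pm_\cL$ formulas, as in the ``examples'' discussion after Definition~\ref{def:operators}) gives $I_kw(x_k)-I_kw(x_0)\to0$. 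One then needs that $I_kw(x_0)$ converges to $Iw(x_0)$ — this is exactly where the convergence $I_k\to I$ in the appropriate sense is used (in the case $I_k\equiv I$ this step is trivial; in general it is the extra content of the more refined results in \cite{Caffarelli11}). Combining, $Iw(x_0)\le\liminf f_k(x_k)=f(x_0)$ by hypothesis~(4), and since $w$ is a legitimate test function touching $u$ from below at $x_0$ (it agrees with $\varphi$ in a neighborhood and with $u$ outside), this is precisely $I\varphi(x_0)\le f(x_0)$; the uniform bound~(5) is used to guarantee all the tail integrals are finite and the limits legitimate.

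The main obstacle I anticipate is not any single estimate but the bookkeeping needed to justify $I_kw_k(x_k)\to Iw(x_0)$ uniformly: one is simultaneously moving the evaluation point ($x_k\to x_0$), perturbing the function in the $L^1(\w_\s)$-tail ($w_k\to w$), and possibly varying the operator ($I_k\to I$), and these three limits must be decoupled cleanly. The clean way is to insert the intermediate quantities $I_kw_k(x_k)$, $I_kw(x_k)$, $I_kw(x_0)$, $Iw(x_0)$ and control each gap by (a) the nonlocal Lipschitz-in-data bound coming from $\cM^\pm_\cL$ and hypothesis~(3)+(5), (b) the equicontinuity of $I_kw$ on $B_{\rho/2}(x_0)$ which is uniform in $k$ because it depends only on bounds for $\|w\|_{C^2(B_\rho)}$ and $\|w\|_{L^1(\w_\s)}$ together with the structural constants $\s,\lambda,\Lambda,\b$, and (c) the operator-convergence hypothesis. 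Since all of these are routine adaptations of the arguments in Section~4 of \cite{Caffarelli09}, I would state them as such and only spell out the construction of the contact points $x_k$ and the verification that $w$ is an admissible test function, referring to \cite{Caffarelli09,Barles08,Caffarelli11} for the remaining details.
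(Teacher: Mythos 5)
Your proposal is correct and is essentially the argument the paper has in mind: the paper gives no proof of Lemma \ref{lemma:Stability}, deferring it to Section 4 of \cite{Caffarelli09} (and \cite{Barles08,Caffarelli11}), and your outline reconstructs exactly that standard scheme — contact points from $\G$-convergence, the modified test function agreeing with $u_k$ outside a small ball, the nonlocal Lipschitz-in-data bound via $\cM^\pm_\cL$ together with the $L^1(\w_\s)$ convergence of the tails, and continuity of the operator to move the evaluation point. Your remark that the case of varying operators $I_k\to I$ is the extra content of \cite{Caffarelli11} matches the paper's own comment.
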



\subsubsection{Comparison principle}\label{Subsubsec:Comparison_principle}

We state in this section three results. The first one is important as it allows to say that the same elliptic relation that holds in the classical sense, namely,
\begin{align*}
\cM_\cL^- (u-v) \leq Iu -Iv \leq \cM^+_\cL (u-v)
\end{align*}
also holds for viscosity solutions. The second result is the maximum principle for viscosity solutions of the extremal operators. As a consequence of both result we also obtain the comparison principle between sub and super solutions of the same equation. The proof of these results can also be obtained as in \cite{Caffarelli09} using int and sup convolutions of the solutions and the previously stated stability.

\begin{theorem}[Equation for the difference of solutions]\label{Eqdiff}
Let $I$ be a continuous elliptic operator with respect to $\cL$ and $f,g$ continuous functions. Given $u \in LSC(\W) \cap L^1(\w_\s)$ and $v \in USC(\W) \cap L^1(\w_\s)$ such that $Iu \leq f$ and $Iv \geq g$ hold in $\W$ in the viscosity sense, then $\cM^-_\cL(u - v) \leq f-g$ also holds in $\W$ in the viscosity sense.
\end{theorem}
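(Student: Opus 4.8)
The plan is to follow the standard Jensen--Ishii doubling-of-variables / sup-inf convolution strategy, exactly as indicated in the excerpt, and to reduce the statement to two ingredients that are already available: the stability Lemma \ref{lemma:Stability} and the principal value Lemma \ref{lemma:Principal_value_lemma}. First I would recall that it suffices to prove the claim when $I$ is itself of the inf-sup form \eqref{inf_sub_operators}; indeed, since $I$ is elliptic with respect to $\cL$, we have $Iu - Iv \leq \cM^+_\cL(u-v)$, and $\cM^+_\cL$ is of the form \eqref{inf_sub_operators} (it is even a plain sup), so it is enough to show that $\cM^+_\cL$ applied to $u-v$ is a subsolution of the corresponding equation. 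More precisely, I would prove the following: if $I$ is of type \eqref{inf_sub_operators}, $Iu \leq f$ and $Iv \geq g$, then $\cM^-_\cL(u-v) \leq f - g$; this is what the theorem asserts, and the reduction just described lets us assume $I$ has the structured form so that Lemma \ref{lemma:Principal_value_lemma} applies.

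The core of the argument is the sup/inf convolution regularization. For small $\eps>0$ set $u^\eps(x) = \sup_y\bigl(u(y) - |x-y|^2/\eps\bigr)$ and $v_\eps(x) = \inf_y\bigl(v(y) + |x-y|^2/\eps\bigr)$ on slightly shrunken subdomains; these are semiconvex/semiconcave, hence twice differentiable a.e., they remain sub/supersolutions of the same equations up to an error in the right-hand side that vanishes as $\eps\to 0$ (with $f,g$ replaced by their sup/inf over a shrinking neighborhood, which converges locally uniformly by continuity of $f,g$), and $u^\eps \downarrow u$, $v_\eps \uparrow v$ pointwise and in $L^1(\w_\s)$ on compact subsets since $u,v$ are bounded on the relevant sets once we localize. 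Then at any point $x_0$ where a $C^2$ test function $\psi$ touches $u^\eps - v_\eps$ from below, one uses the classical Jensen lemma to find nearby points $x_1,x_2$ with $x_1 - x_2 = $ (something small), where $u^\eps$ is punctually $C^{1,1}$ and touched from below by $\psi(\cdot + x_2 - x_1) + $ linear terms and $v_\eps$ is punctually $C^{1,1}$ and touched from above by the companion function, with matching gradients. Applying Lemma \ref{lemma:Principal_value_lemma} to each of $u^\eps$ and $v_\eps$ at $x_1$, $x_2$ respectively and subtracting the resulting principal-value inequalities — using that for every fixed $K\in\cK$ the operator $L_K$ is linear in $\delta$, and that $\inf\sup - \inf\sup \geq \inf(\cdot)$ of the difference of the $L_{\alpha,\beta}$'s, while the gradient terms $b_{\alpha,\beta}\cdot D$ cancel because the gradients match — gives $\cM^-_\cL$ of a suitable second-difference quantity $\leq (f-g)$ up to the regularization error. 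Passing $\eps \to 0$ and invoking the stability Lemma \ref{lemma:Stability} (with $I_k = \cM^-_\cL$, $u_k = u^{\eps_k} - v_{\eps_k}$, and right-hand sides converging locally uniformly) yields $\cM^-_\cL(u-v)\leq f-g$ in the viscosity sense in $\W$.

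The step I expect to be the main obstacle is the careful handling of the nonlocal terms in the doubling argument. In the second-order case one simply subtracts Hessian inequalities, but here one must control, for each $K\in\cK$, the quantity $\delta(u^\eps, Dp; x_1; y) - \delta(v_\eps, Dp; x_2; y)$ uniformly over the family $\cK$, and argue that its principal value is dominated by $\cM^+_\cK$ of the difference $w = u^\eps - v_\eps$ plus the $\b|Dw|$ drift contribution — this requires keeping track of the fact that the two nonlocal operators are evaluated at \emph{different} base points $x_1 \neq x_2$ and that the truncation in the gradient correction (the $\chi_{B_1}$) behaves consistently. The usual resolution is that $|x_1 - x_2| \to 0$ as $\eps\to 0$ and $w$ itself is semiconvex, so one can first prove the inequality for $\cM^-_\cL w$ touched by test functions at points where $w$ has enough punctual regularity (a dense set, by the barrier argument used in the Corollary above) and then globalize; alternatively, one transfers everything to the single point $x_0$ as in \cite{Caffarelli09}. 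I would follow \cite{Caffarelli09} closely here and only flag where the drift term $b\cdot D$ enters, noting that it causes no new difficulty precisely because the matched gradients make it drop out in the subtraction. Finally I would remark that the non-symmetry of the kernels plays no role in this particular proof: the argument only uses the structural bounds $K^-\leq K\leq K^+$ and hypothesis (\ref{hypothesis_odd}) through the already-packaged extremal operators $\cM^\pm_\cL = \cM^\pm_\cK \pm \b|D\cdot|$.
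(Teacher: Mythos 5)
Your overall skeleton --- regularize by inf/sup convolutions, evaluate classically at the contact point, apply the pointwise ellipticity, and pass to the limit with Lemma \ref{lemma:Stability} --- is the route the paper intends (it simply refers the details to \cite{Caffarelli09}). But two steps of your proposal are genuinely flawed. The opening reduction is circular: the inequality you invoke, $Iu - Iv \leq \cM^+_\cL(u-v)$ (and the one actually relevant here, $\cM^-_\cL(u-v)\leq Iu - Iv$), is the pointwise ellipticity of Definition \ref{def:Ellipticity}, valid only where $Iu$ and $Iv$ can be evaluated; for viscosity sub/supersolutions you cannot subtract the two inequalities, and extending exactly this relation to viscosity solutions is the content of the theorem, so it cannot be used to set up its own proof. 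Moreover, even granting it, it would not reduce matters to an operator of the form \eqref{inf_sub_operators}: the hypotheses $Iu\leq f$ and $Iv\geq g$ concern the given $I$, and they are not inherited by $\cM^\pm_\cL$. What is needed for a general elliptic $I$ is a classical-evaluation statement at contact points for $I$ itself (the analogue of Lemma \ref{lemma:Principal_value_lemma}, which holds for general elliptic continuous operators by essentially the same argument, using Definition \ref{def:operators}); that is what should replace your reduction. A smaller slip: since $u$ is the supersolution and $v$ the subsolution, you must take the \emph{inf}-convolution of $u$ and the \emph{sup}-convolution of $v$; your definitions have them interchanged, and with that choice the approximations do not remain super/subsolutions.

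The second, more substantive issue is the doubling-of-variables/Jensen step with two distinct base points $x_1\neq x_2$. The difficulty you yourself flag --- comparing the nonlocal terms $\d(\cdot,x_1;y)$ and $\d(\cdot,x_2;y)$ uniformly over $\cK$ --- is precisely what the convolution argument in \cite{Caffarelli09} is designed to avoid, and your proposal resolves it only by saying you would ``follow \cite{Caffarelli09}'', i.e.\ it defers exactly the step where the work lies. The correct mechanism is a single-point one: if a test function $\varphi$ touches the difference of the regularized functions from below at $x_1$, then the semiconcavity of the inf-convolution of $u$ and the semiconvexity of the sup-convolution of $v$, combined with this touching, squeeze each of the two regularized functions between a paraboloid and a $C^{1,1}$ obstacle at the \emph{same} point $x_1$, so both are punctually $C^{1,1}$ there. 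Hence $I$ can be evaluated classically on both at $x_1$, the viscosity inequalities hold classically there, and the pointwise ellipticity gives $\cM^-_\cL$ of the difference $\leq f-g$ at $x_1$ up to the regularization error; no Jensen lemma, no second base point, and no transfer of nonlocal terms between base points is ever required. With that replacement, and the limit taken via Lemma \ref{lemma:Stability} as you indicate, the argument closes; as written, however, both the reduction and the two-point contact analysis are gaps.
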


\begin{theorem}[Maximum principle]
Let $u \in LSC(\bar\W) \cap L^1(\w_\s)$ be a viscosity super solution of $\cM^-_\cL u \leq 0$ in $\W$. Then $\inf_{\bar \W}u = \inf_{\R^n \sm \W}u$.
\end{theorem}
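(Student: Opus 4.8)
The plan is to prove the maximum principle for viscosity supersolutions of $\cM^-_\cL u \le 0$ by a fairly standard argument: we want to show $\inf_{\bar\W} u = \inf_{\R^n\sm\W} u$. One inequality, $\inf_{\bar\W} u \le \inf_{\R^n\sm\W} u$, needs a tiny bit of care since $\R^n\sm\W$ and $\bar\W$ only meet on $\p\W$, but it follows from lower semicontinuity: for $x\in\p\W$ we can approach by points of $\R^n\sm\W$ and use $u\in LSC$. The substantive content is the reverse inequality, which amounts to: if $m := \inf_{\R^n\sm\W} u$, then $u\ge m$ on $\W$. Replacing $u$ by $u-m$ (which changes nothing in the equation since $\cM^-_\cL$ kills constants and $\cM^-_\cL(u-m) = \cM^-_\cL u \le 0$), we may assume $u\ge 0$ on $\R^n\sm\W$ and must show $u\ge 0$ on $\W$.

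The core step is the following: suppose for contradiction that $\inf_{\W} u =: -\eta < 0$. I would first reduce to the case where the infimum is attained at an interior point. If $\W$ is bounded this is immediate from $u\in LSC(\bar\W)$ together with $u\ge 0$ on $\R^n\sm\W\supseteq\p\W$, so the minimum over $\bar\W$ is attained at some $x_0\in\W$ with $u(x_0) = -\eta$. (For unbounded $\W$ one should either assume boundedness — consistent with how these maximum principles are usually stated here — or add a hypothesis that the infimum is attained; I would remark on this.) At such an interior minimum point $x_0$, the constant function $\varphi \equiv -\eta$ is a legitimate test function touching $u$ from below at $x_0$ in the sense of Definition~\ref{def:Viscosity_solutions}: it is $C^2$, equals $u(x_0)$ at $x_0$, and lies below $u$ on all of $\R^n$ since $-\eta = \inf_\W u$ and $u\ge 0 > -\eta$ outside $\W$. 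The definition of viscosity supersolution then gives $\cM^-_\cL\varphi(x_0) \le 0$. But I can compute $\cM^-_\cL\varphi(x_0)$ directly: using the explicit formula
\begin{align*}
\cM^-_\cL\varphi(x_0) = \cM^-_\cK\varphi(x_0) - \b|D\varphi(x_0)| = (2-\s)\int \frac{\d^+(\varphi,x_0;y)\l\chi_{B_1}(y) - \d^-(\varphi,x_0;y)\L}{|y|^{n+\s}}\,dy,
\end{align*}
where $\d(\varphi,x_0;y) = \varphi(x_0+y) - \varphi(x_0) - D\varphi(x_0)\cdot y\chi_{B_1}(y) = u(x_0+y) + \eta \ge 0$ for all $y$ (since $\varphi$ touches $u$ from below), and $D\varphi(x_0) = 0$. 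Hence $\d^- \equiv 0$, $\d^+ = u(x_0+\cdot)+\eta \ge 0$, and moreover $\d^+$ is not identically zero — otherwise $u\equiv -\eta$ on a neighborhood and, extending the argument, $u$ would be forced to equal $-\eta$ everywhere, contradicting $u\ge 0$ off $\W$ (here one uses that the set $\{u = -\eta\}$ is both closed, by LSC, and, by repeating the touching argument at any of its points, open). Therefore $\cM^-_\cL\varphi(x_0) = (2-\s)\int \d^+(\varphi,x_0;y)\l\chi_{B_1}(y)|y|^{-(n+\s)}\,dy > 0$, contradicting $\cM^-_\cL\varphi(x_0)\le 0$.

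The main obstacle is the strict positivity claim $\d^+(\varphi,x_0;\cdot)\not\equiv 0$ and the attendant open-closed (strong-maximum-principle-flavored) argument: one must rule out the degenerate possibility that $u$ is identically equal to its interior minimum. The clean way is: let $Z = \{x\in\R^n : u(x) = -\eta\}$; it is closed by lower semicontinuity of $u$; and if $x_1\in Z\cap\W$ then, running the computation above at $x_1$, $\cM^-_\cL\varphi(x_1)\le 0$ forces $\int_{B_1} (u(x_1+y)+\eta)|y|^{-(n+\s)}\,dy = 0$, so $u = -\eta$ a.e. in $B_1(x_1)$, hence (again by LSC and $u\ge -\eta$) everywhere in $B_1(x_1)$; iterating, $Z\cap\W$ is relatively open in $\R^n$, so by connectedness considerations near $\p\W$ one reaches a boundary point and a point of $\R^n\sm\W$, where $u\ge 0 > -\eta$, a contradiction — unless $\W$ is disconnected, in which case one argues on the connected component containing $x_0$. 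I would present this cleanly and note that the only genuine hypothesis needed beyond what is stated is either boundedness of $\W$ or attainment of the infimum; everything else is a direct consequence of the explicit form of $\cM^-_\cL$ and Definition~\ref{def:Viscosity_solutions}.
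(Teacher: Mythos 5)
Your argument is essentially correct in its substantive content, but it is a genuinely different route from the paper's. The paper never argues directly at a minimum point: it obtains this maximum principle (and then the comparison principle) by importing the machinery of \cite{Caffarelli09} --- sup/inf convolutions of the solution together with the Stability Lemma \ref{lemma:Stability} --- and its only real task is to verify Assumption 5.1 of that paper, namely that the barrier $\varphi_R=\min(1,|x|^2/R^3)$ satisfies $L\varphi_R>\d$ in $B_R$ for every $L\in\cL$, which is checked here via scale invariance and continuity of the family at the origin. That barrier is precisely what replaces your two work-arounds: it supplies strictness (so one never faces the degenerate case of touching by a constant, whose $\cM^-_\cL$ vanishes and which you must rule out by a strong-maximum-principle chaining) and it penalizes infinity (so no boundedness/attainment hypothesis is needed). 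Your route instead exploits the uniform positivity of $K^-$ on $B_1$, i.e.\ a propagation-of-the-minimum argument; this is perfectly available for the class $\cL$ of this paper and is more elementary, but it is not the paper's argument and it does require the infimum to be attained at an interior point, as you yourself note.

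Two points need repair. First, your computation is not literal as written: if the test function is the constant $\varphi\equiv-\eta$, then $\d(\varphi,x_0;y)=0$ and $\cM^-_\cL\varphi(x_0)=0$, which carries no information; the inequality you want comes from plugging in the glued function $u_{(\varphi,B_r(x_0))}$ (the convention stated after Definition \ref{def:Viscosity_solutions}) and letting $r\to0$, or more cleanly from Lemma \ref{lemma:Principal_value_lemma} with $D\varphi(x_0)=0$, which gives $\inf_{K\in\cK}L_K(u,0)(x_0)\le 0$ while each $L_K(u,0)(x_0)\ge\int (u(x_0+y)+\eta)K^-(y)\,dy\ge0$; since $K^-$ is bounded below by a positive kernel exactly on $B_1$, this forces $u=-\eta$ a.e.\ only in $B_1(x_0)$, which is indeed all your chaining uses. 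Second, the ``easy'' direction $\inf_{\bar\W}u\le\inf_{\R^n\sm\W}u$ does not follow from lower semicontinuity and is in fact false in general: take $\W=B_1$ and a smooth bounded $u$ with $u\equiv0$ on $B_2$ and $u\equiv-5$ outside $B_3$; then for $x\in B_1$ one has $\d(u,x;y)=u(x+y)\le0$ and $Du(x)=0$, so $\cM^-_\cL u\le0$ classically in $B_1$, yet $\inf_{\bar\W}u=0>-5=\inf_{\R^n\sm\W}u$. Lower semicontinuity only compares $u$ on $\p\W$ with nearby exterior values, not with the global exterior infimum. The meaningful content of the theorem --- and what the comparison principle actually uses, and what the argument of \cite{Caffarelli09} yields --- is the inequality $\inf_{\bar\W}u\ge\inf_{\R^n\sm\W}u$ (equivalently $\inf_{\R^n}u=\inf_{\R^n\sm\W}u$); your main argument proves exactly this, so you should state that and drop the LSC claim. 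With those adjustments, and with the boundedness (or attainment) hypothesis made explicit, your proof is a valid alternative for this particular class $\cL$, though less robust than the paper's barrier-based citation, which does not use the positivity of $K^-$ at all.
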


An important point to mention here with respect to our specific operators is that we have to check whenever the Assumption 5.1 in \cite{Caffarelli09} holds. This assumption requires the existence of a barrier used in the proof of the maximum principle stated above. Specifically it says that there is some constant $R_0 \geq 1$ large enough so that for every $R > R_0$, there exists a $\d > 0$ such that for any operator $L \in \cL$, we have that $L\varphi > \d$ in $B_R$, where $\varphi$ is given by
\begin{align*}
\varphi_R = \min(1,|x|^2/R^3),
\end{align*}
But our family is scale invariant therefore it is enough to check that the strict inequality can be attained for $\varphi_1$ in a neighborhood of the origin. Moreover, as our family of linear operators is continuous we only have to check that the strict inequality holds at the origin which is clearly true.

\begin{corollary}[Comparison principle]\label{coro:Comparison_principle}
Let $I$ be a continuous elliptic operator with respect to $\cL$, $u \in LSC(\bar\W) \cap L^1(\w_\s)$ be a viscosity super solution and $v \in USC(\bar\W) \cap L^1(\w_\s)$ be a viscosity sub solution of the same equation $Iw = f$ in $\W$. Then $u \geq v$ in $\R^n \sm \W$ implies $u \geq v$ in $\R^n$.
\end{corollary}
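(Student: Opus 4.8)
The plan is to deduce the comparison principle from the two previously stated results: Theorem \ref{Eqdiff} (equation for the difference of solutions) and the maximum principle for $\cM^-_\cL$. Suppose $u$ is a viscosity supersolution and $v$ a viscosity subsolution of $Iw=f$ in $\W$, with $u\ge v$ on $\R^n\sm\W$. Since $Iu\le f$ and $Iv\ge f$ hold in the viscosity sense in $\W$, Theorem \ref{Eqdiff} applies with $g=f$ and yields that the difference $w:=u-v\in LSC(\W)\cap L^1(\w_\s)$ satisfies $\cM^-_\cL w\le f-f=0$ in the viscosity sense in $\W$. The hypothesis $u\in LSC(\bar\W)$ and $v\in USC(\bar\W)$ guarantees $w=u-v\in LSC(\bar\W)$, so $w$ is an admissible competitor for the maximum principle.

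Next I would apply the maximum principle (the theorem stated just above the corollary) to $w$: since $w$ is a viscosity supersolution of $\cM^-_\cL w\le 0$ in $\W$ and $w\in LSC(\bar\W)\cap L^1(\w_\s)$, we conclude $\inf_{\bar\W}w=\inf_{\R^n\sm\W}w$. By the assumption $u\ge v$ in $\R^n\sm\W$ we have $w\ge 0$ on $\R^n\sm\W$, hence $\inf_{\R^n\sm\W}w\ge 0$, and therefore $\inf_{\bar\W}w\ge 0$. This gives $u\ge v$ on $\bar\W$, and combined with $u\ge v$ on $\R^n\sm\W$ we obtain $u\ge v$ on all of $\R^n$, which is the claim.

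One point that needs a small remark: one must check that $w=u-v$ genuinely lies in $L^1(\w_\s)$ so that the maximum principle applies, but this is immediate since both $u$ and $v$ are assumed to be in $L^1(\w_\s)$ and $L^1(\w_\s)$ is a vector space. A second, more substantive point is whether the difference of a supersolution and a subsolution of $Iw=f$ is handled correctly by Theorem \ref{Eqdiff}; that theorem is stated precisely for $Iu\le f$ and $Iv\ge g$, and here $g=f$, so there is no gap — the content of that theorem (proved via sup/inf convolutions and stability, as indicated) is exactly what bridges the nonlinear inequalities to a single linear extremal inequality.

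The main obstacle, then, is not in the corollary's short argument itself but is already absorbed into the two cited results: Theorem \ref{Eqdiff} (which requires the sup/inf-convolution machinery together with Lemma \ref{lemma:Stability}) and the maximum principle (which requires Assumption 5.1 of \cite{Caffarelli09}, i.e. the scale-invariant barrier $\varphi_R=\min(1,|x|^2/R^3)$ satisfying $L\varphi_R>\d$ on $B_R$, verified in the discussion preceding the corollary using continuity of $\cL$ and scale invariance). Granting those, the proof of the comparison principle is a two-line chain: apply Theorem \ref{Eqdiff} to pass to $\cM^-_\cL(u-v)\le 0$, then apply the maximum principle to transfer the sign of $u-v$ from $\R^n\sm\W$ to $\bar\W$.
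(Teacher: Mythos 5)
Your proposal is correct and is exactly the argument the paper intends: the corollary is stated as a direct consequence of Theorem \ref{Eqdiff} (applied with $g=f$ to get $\cM^-_\cL(u-v)\leq 0$ in the viscosity sense) followed by the maximum principle for $\cM^-_\cL$, with the real work deferred to those two results. Your remarks on the semicontinuity of $u-v$ and its membership in $L^1(\w_\s)$ are accurate and complete the short deduction.
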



\subsubsection{Existence}\label{Subsubsec:Existence}

Existence and uniqueness of a solution in the viscosity sense follows from the comparison principle by using Perron's method, see \cite{Crandall92} for the local case and \cite{Barles08-2} for more related results in the non local case. The additional ingredient we need is the barrier from Lemma \ref{lemma:Barrier1} that guarantees that the boundary values are attained in a continuous way.

\begin{lemma}[Barrier]\label{lemma:Barrier1}
There exists some exponent $\a \in (0,1)$ and some radius $r_0 >0$ sufficiently small such that,
\begin{align*}
\varphi(x) = ((|x|-1)^+)^\a,
\end{align*}
satisfies,
\begin{align*}
\cM_\cL^+ \varphi < -1 \text{ in $B_{1+r_0}\sm\bar B_1$}.
\end{align*}
\end{lemma}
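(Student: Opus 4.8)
The plan is to construct an explicit power-type barrier $\varphi(x) = ((|x|-1)^+)^\alpha$ and verify the differential inequality $\mathcal{M}^+_\cL \varphi < -1$ on the thin annulus $B_{1+r_0}\setminus \bar B_1$ by choosing $\alpha$ small and $r_0$ small. Recall that $\mathcal{M}^+_\cL \varphi = \mathcal{M}^+_\cK \varphi + \beta|D\varphi|$, so we must estimate both pieces. Fix a point $x$ with $1 < |x| < 1+r_0$ and write $d = |x| - 1 \in (0, r_0)$; by rotational symmetry we may assume $x = (1+d)e_1$. The key heuristic is that near the inner sphere $\partial B_1$ the function $\varphi$ behaves like $d^\alpha$, which for $\alpha < 1$ is concave in the radial direction with a blow-up of the second derivative of order $d^{\alpha - 2}$; meanwhile on the set $\{|y|<1\}$ (a definite portion of the ball $B_1$ lying at bounded distance inside) we have $\varphi \equiv 0$ while $\varphi(x) = d^\alpha > 0$, so the increments $\delta(\varphi, x; y)$ are very negative there.

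First I would split $\mathcal{M}^+_\cK\varphi(x) = (2-\sigma)\int \tfrac{\delta^+(\varphi,x;y)\Lambda - \delta^-(\varphi,x;y)\lambda\chi_{B_1}(y)}{|y|^{n+\sigma}}\,dy$ into the region near $x$ and the region away from $x$. For the far region, specifically for $y$ such that $x+y$ lies in the interior of $B_1$ (which captures a solid cone of directions of opening independent of $d$, at distances $|y|$ between roughly $2d$ and $1$), we have $\varphi(x+y) = 0$, hence $\delta(\varphi,x;y) = -\varphi(x) - D\varphi(x)\cdot y\chi_{B_1}(y) = -d^\alpha - D\varphi(x)\cdot y\chi_{B_1}(y)$, which is bounded above by $-d^\alpha/2$ once $d$ is small because $|D\varphi(x)| \le C d^{\alpha-1}$ times $|y|$ is dominated by $d^\alpha$ when $|y| \lesssim d$ and the contribution of larger $|y|$ is offset. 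Integrating $-\delta^-(\varphi,x;y)\lambda\chi_{B_1}(y)/|y|^{n+\sigma} \le -c\lambda d^\alpha/|y|^{n+\sigma}$ over this cone with $|y|$ ranging down to $\sim d$ gives a negative term of size at least $-c\lambda d^\alpha \cdot d^{-\sigma} = -c\lambda d^{\alpha-\sigma}$. Since $\sigma \ge 1 > \alpha$ for $\alpha$ small, this is $\le -c\lambda d^{\alpha - \sigma}$ with a large negative exponent; the factor $(2-\sigma)$ is bounded below away from $\sigma = 2$ but since $\sigma < 2$ strictly we may absorb it, or more carefully keep $(2-\sigma)$ and note the bound is $-c(2-\sigma)\lambda d^{\alpha-\sigma}$ which still tends to $-\infty$ as $d \to 0$.

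Next I would bound the contributions that go the wrong way (the positive part $\delta^+$ weighted by $\Lambda$). The increments $\delta^+(\varphi,x;y)$ are supported where $x+y$ is further from $\partial B_1$ than $x$; using that $\varphi$ is $C^{1,1}$ away from $\partial B_1$ with $|D^2\varphi| \le C d^{\alpha - 2}$ at distance $\sim d$, a Taylor expansion gives $\delta^+(\varphi,x;y) \le C d^{\alpha-2}|y|^2$ for $|y| \le d/2$ and $\delta^+(\varphi,x;y) \le C|x+y-e_1\cdot\text{stuff}|^\alpha \le C(d+|y|)^\alpha$ for larger $|y|$; integrating $\Lambda\delta^+/|y|^{n+\sigma}$ over $|y| < d/2$ gives $\le C\Lambda d^{\alpha-2}\int_0^{d/2} r^{1-\sigma}\,dr \le C\Lambda d^{\alpha-2}d^{2-\sigma} = C\Lambda d^{\alpha-\sigma}$, while for $|y| \ge d/2$ one gets $\le C\Lambda d^{\alpha-\sigma}$ as well by a similar scaling. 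The crucial point is that both the good (negative) term and the bad (positive) terms scale like $d^{\alpha-\sigma}$, but the negative one has a definite constant coming from the solid cone inside $B_1$ where $\varphi$ vanishes identically, and the $C^{1,1}$-Taylor estimate, being only a local quadratic bound, comes with a constant that does not beat it once we also use that the set where $\delta^+ > 0$ near $x$ is genuinely smaller. This balancing, together with $\beta|D\varphi(x)| \le C\beta d^{\alpha-1}$ which is a strictly lower-order power than $d^{\alpha-\sigma}$ since $\sigma \ge 1$, shows $\mathcal{M}^+_\cL\varphi(x) \le -c d^{\alpha-\sigma} + Cd^{\alpha-1} \le -\tfrac{c}{2}d^{\alpha-\sigma}$ for $d$ small, and since $\alpha - \sigma < 0$ the right-hand side is $< -1$ for all $d < r_0$ once $r_0$ is chosen small enough.

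The main obstacle I anticipate is the case $\sigma = 1$, where the scalings $d^{\alpha-\sigma}$ of the diffusive terms and $d^{\alpha-1}$ of the drift term \emph{coincide}, so the barrier only works if the constant in the negative diffusive contribution strictly exceeds $C\beta$. This forces a careful, quantitative lower bound on the solid angle of directions $y/|y|$ for which $x+y$ re-enters $B_1$ and a sharp bookkeeping of the $C^{1,1}$ bound on $\varphi$: the trick is that as $\alpha \to 0^+$ the negative term $-c\lambda d^{\alpha-1}$ has a constant $c = c(\alpha)$ bounded below, whereas one should check whether the positive-side constant degenerates or not; if it does not, one instead exploits that $\alpha$ can be taken so small that $\varphi$ is arbitrarily close to the indicator-type profile making $\delta^-$ dominate, or one simply notes that the $d^\alpha$ prefactor common to all terms can be factored out and the inequality reduces to a fixed (d-independent) comparison of integrals which holds by the positivity of the kernel $K^-$ near the origin. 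I would resolve this by first treating $\sigma \in (1,2)$ where the drift is strictly subcritical and the estimate above closes trivially, and then handling $\sigma = 1$ by choosing $\alpha$ small enough that the $\delta^-$ term on the cone inside $B_1$ dominates both the $\delta^+$ Taylor term and $\beta|D\varphi|$, which is possible because the former has a constant bounded below in $\alpha$ while the latter two have constants that stay bounded (not growing) as $\alpha \to 0$.
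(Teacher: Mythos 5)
Your strategy (a direct pointwise estimate of $\cM^+_\cL\varphi$ on the whole annulus, with power counting in $d=|x|-1$) is genuinely different from the paper's proof, which never computes at points close to $\p B_1$: there, the scale invariance of $\cL$ is used to reduce the claim to the single point $(1+r_0)e_1$, and at that one point the bound is obtained by stability, letting $\a\to0$ so that $\varphi\to\chi_{\R^n\sm B_1}$ and $\cM^+_\cL\varphi((1+r_0)e_1)$ approaches $\cM^+_\cL\chi_{\R^n\sm B_1}((1+r_0)e_1)$, which is very negative for $r_0$ small. Your computation, however, has a genuine gap, located exactly where the paper's detour is needed. The culprit is the compensator $-D\varphi(x)\cdot y\,\chi_{B_1}(y)$: at $x=(1+d)e_1$ one has $|D\varphi(x)|=\a d^{\a-1}$, so for inward vectors $y$ with $x+y\in B_1$ and $d/\a \le |y|<1$ the increment is $\d(\varphi,x;y)=-d^\a+\a d^{\a-1}|y\cdot e_1|>0$, not $\le -d^\a/2$ as you assert (the phrase ``the contribution of larger $|y|$ is offset'' is not substantiated), and your bound $\d^+\le C(d+|y|)^\a$ for $|y|\ge d/2$ ignores this term as well. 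These positive increments fill a cone of fixed opening and are weighted by $\L$ in $\cM^+_\cK$; integrating them gives a contribution of order $\L\a^{\s}d^{\a-\s}/(\s-1)$ for $\s>1$ and of order $\L\,\a\, d^{\a-1}\log(\a/d)$ for $\s=1$. For fixed $\s>1$ this can be absorbed by taking $\a$ small (degenerating as $\s\downarrow 1$), but for $\s=1$ --- precisely the case you flag as delicate --- it is \emph{not} of the same order as your good term $-c\l d^{\a-1}$: for any fixed $\a$ it dominates as $d\to0$, so the claimed bound $\cM^+_\cK\varphi+\b|D\varphi|\le-\tfrac{c}{2}d^{\a-\s}$ cannot hold uniformly up to $\p B_1$ if one works with the decoupled extremal formula, and none of your proposed remedies addresses this. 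Your $\s=1$ discussion only weighs the drift $\b|D\varphi|=\b\a d^{\a-1}$ against the good term (harmless, because of the factor $\a$), and misses the compensator term entirely.

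The repair requires using hypothesis (3), which ties $b$ to the odd part of $K$ at every scale: estimating $L\varphi=L_K\varphi+b\cdot D\varphi$ operator by operator, the net first-order contribution of $b$ plus $\int_{B_1\sm B_r}yK(y)\,dy$ is capped by $\b r^{1-\s}$, which is exactly what kills the logarithm above; the pointwise $\sup$ over kernels discards this constraint. The paper gets this for free: each rescaled operator $\tilde L$ stays in $\cL$, so the estimate proved at the single point $(1+r_0)e_1$ --- where for $\a$ small the gradient $\a r_0^{\a-1}$ is tiny and the problematic region $d/\a\le|y|<1$ is empty --- propagates to every $x=(1+r)e_1$, $r<r_0$, via $L\varphi((1+r)e_1)=(r_0/r)^{\s-\a}\tilde L\tilde\varphi((1+r_0)e_1)\le\cM^+_\cL\varphi((1+r_0)e_1)$. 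If you want a computational proof, either restrict your direct estimate to $d\ge\a$ and use the scaling step for $d<\a$, or redo the estimate for admissible pairs $(K,b)$ using (3); as written, the proposal does not establish the lemma for $\s=1$. (Your care with the factor $(2-\s)$ is correct but beside the point: the paper explicitly does not require uniformity as $\s\to2$ in this lemma.)
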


In the following proof $\a\to0$ as $\s\to2$. If we were looking for a boundary estimate which remains independent of $\s$ as it goes to two we would have to modify this proof a bit by using Lemma \ref{lemma:limit_to_the_classic}. However this barrier will only be used in the existence Theorem \ref{thm:existence} which is only a qualitative result for which we actually do not need the uniform estimate.

\begin{proof}
By the radial symmetry of $\varphi$ it is enough to show that the inequality holds for $x = (1+r)e_1$ with $r\in(0,r_0)$. It is also enough to show that it holds for $x = (1+r_0)e_1$. Indeed consider the rescaling $\tilde \varphi$ of $\varphi$ given by a dilation of magnitude $r_0/r$ and a translation such that $\tilde\varphi \leq \varphi$ and $\tilde\varphi((1+\r)e_1) = \varphi((1+\r)e_1)$ for $\r>0$, i.e.
\begin{align*}
\tilde\varphi(x) = (r_0/r)^\a\varphi((r/r_0)(x-e_1) + e_1).
\end{align*}
Then for every $L \in \cL$ we have there exists some rescaled $\tilde L \in \cL$ such that,
\begin{align*}
L\varphi ((1+r)e_1) &= (r_0/r)^{\s-\a}\tilde L\tilde\varphi((1+r_0)e_1),\\
&\leq \cM_\cL^+ \varphi((1+r_0)e_1).
\end{align*}
By taking the supremum over $L\in\cL$ on the left hand side we conclude that it is enough to prove that $\cM_\cL^+ \varphi((1+r_0)e_1) < -1$ to get the inequality in $B_{1+r_0}\sm\bar B_1$.

Now we just use the Stability Lemma \ref{lemma:Stability}. As $\a$ goes to 0, $\varphi$ converges to $\chi_{\R^n \sm B_1}$, locally uniformly in $\R^n \sm \bar B_1$, then $\cM_\cL^+ \varphi$ converges to $\cM_\cL^+ \chi_{\R^n \sm B_1}$ uniformly in $B_2 \sm B_{1+\e}$ for every $\e>0$. Notice that $\cM_\cL^+ \chi_{\R^n \sm B_1}(x)$ becomes arbitrarily negative as $|x| \to 1^+$. Take then $r_0$ such that $\cM_\cL^+ \chi_{\R^n \sm B_1}((1+r_0)e_1) < -2$ and $\a$ sufficiently small such that $\cM_\cL^+\varphi((1+r_0)e_1) < -1$.
\end{proof}

\begin{theorem}[Existence]\label{thm:existence}
Given a domain $\W \ss \R^n$ with the exterior ball condition, an continuous elliptic operator $I$ with respect to $\cL$ and $f$ and $g$ bounded and continuous functions (in fact $g$ only need to be assumed continuous at $\p\W$), then the Dirichlet problem, 
\begin{align*}
 Iu &= f \text{ in $\W$},\\
 u &= g \text{ in $\R^n\sm\W$},
\end{align*}
has a unique viscosity solution $u$.
\end{theorem}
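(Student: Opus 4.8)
The plan is to use Perron's method in the standard way, relying on the comparison principle (Corollary \ref{coro:Comparison_principle}) for uniqueness and on the barrier from Lemma \ref{lemma:Barrier1} to force continuity up to the boundary. First I would settle uniqueness: if $u_1,u_2$ are two viscosity solutions, then each is simultaneously a subsolution and a supersolution, both agree with $g$ on $\R^n\sm\W$, so Corollary \ref{coro:Comparison_principle} applied twice (with the roles of sub/super exchanged) gives $u_1\le u_2$ and $u_2\le u_1$, hence $u_1=u_2$ in $\R^n$.

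For existence I would define the Perron family
\begin{align*}
\cS = \{\, w \in USC(\R^n)\cap L^1(\w_\s) : \ Iw \ge f \ \text{in the viscosity sense in }\W,\ w \le g \text{ in }\R^n\sm\W,\ w\le \bar w\,\},
\end{align*}
where $\bar w$ is a supersolution (constructed below) taking the value $g$ on $\R^n\sm\W$, and set $u = \sup_{w\in\cS} w$. One first checks $\cS$ is nonempty: since $f$ and $g$ are bounded, a suitable constant (or affine function) minus a large multiple of the existence barrier is a subsolution below $g$ outside $\W$, using that $\cM^+_\cL$ (hence $I$) of a constant vanishes and that $\cM^+_\cL$ controls $I$ from above via ellipticity. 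The usual two lemmas of Perron theory then apply, their proofs being exactly as in \cite{Crandall92} (local case) and \cite{Barles08-2} (nonlocal case), once the ingredients invoked there are in place here: (i) the supremum of subsolutions, after upper-semicontinuous regularization, is again a subsolution — this uses the Stability Lemma \ref{lemma:Stability} together with the half-relaxed limit (the $\G$-convergence) formalism; and (ii) if the regularized $u^*$ failed to be a supersolution at some interior point, one could bump it up on a small ball, staying in $\cS$ and contradicting maximality — this uses Lemma \ref{lemma:Principal_value_lemma} / the bump construction exactly as in the references. Thus $u$ is a viscosity solution of $Iu=f$ in $\W$, with $u=g$ on $\R^n\sm\W$ once boundary continuity is established.

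The remaining, and main, point is attainment of the boundary data continuously, i.e.\ that $u$ is continuous up to $\p\W$ with $u=g$ there; this is where the exterior ball condition and Lemma \ref{lemma:Barrier1} enter. Fix $x_0\in\p\W$ and an exterior ball $B_\rho(z)\ss\R^n\sm\W$ tangent at $x_0$. Rescaling and translating the barrier $\varphi(x)=((|x|-1)^+)^\a$ of Lemma \ref{lemma:Barrier1} to this exterior ball, and multiplying by a large constant chosen in terms of $\|f\|_\8$ and the modulus of continuity of $g$ at $x_0$, I would build functions $\psi^\pm$ with $\cM^\mp_\cL \psi^\pm \lessgtr f$ near $x_0$, with $\psi^-\le g\le\psi^+$ on $\R^n\sm\W$, and with $\psi^\pm(x_0)=g(x_0)$; by comparison these sandwich any element of $\cS$ from above and below near $x_0$, so $u$ has the same one-sided limits as $g$ at $x_0$. (Scale invariance of $\cL$ of order $\s$, already recorded in the excerpt, is what makes the rescaled barrier admissible; note the barrier of Lemma \ref{lemma:Barrier1} is stated only for $B_{1+r_0}\sm\bar B_1$, so one uses it at the correct small scale.) Since this holds at every boundary point, $u\in C(\bar\W)$ and $u=g$ on $\p\W$; combined with $u=g$ on $\R^n\sm\W$ by construction, $u$ is the desired solution, and uniqueness was already shown. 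The main obstacle is purely bookkeeping: verifying that the barrier of Lemma \ref{lemma:Barrier1}, which is tailored to a unit exterior ball and degenerates as $\s\to2$, can be rescaled and superposed to dominate the inhomogeneity $f$ and match the boundary modulus of $g$ — but since existence is only a qualitative statement, no uniformity in $\s$ is needed and the construction goes through as in \cite{Barles08-2}.
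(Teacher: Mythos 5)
Your proposal follows essentially the same route as the paper: uniqueness from Corollary \ref{coro:Comparison_principle}, existence by Perron's method resting on Lemma \ref{lemma:Stability} and the comparison principle, and continuous attainment of the boundary data via the barrier of Lemma \ref{lemma:Barrier1} rescaled and placed on the exterior ball, then compared with $u$. The only cosmetic difference is that the paper works with an $\e$-shifted upper barrier ($\psi \le g(x_0)+\e$ near $x_0$) rather than one equal to $g(x_0)$ at $x_0$, which is what the mere continuity of $g$ actually permits; otherwise the argument is the same.
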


\begin{proof}
The uniqueness part follows from the comparison principle. For the existence we have an stability result coming from Lemma \ref{lemma:Stability} and the comparison principle coming from Corollary \ref{coro:Comparison_principle} therefore Perron's method applies to show the existence of a viscosity solution $u$, defined as the smallest viscosity super solution above the boundary values given by $g$. Moreover we obtain $\|u\|_\8 \leq \|g\|_\8$. The Dirichlet boundary problem gets solved by $u$ provided that there exists barriers that force $u$ to take the boundary value in a continuous way. We will show that for $x_0 \in \p\W$ and $\e>0$, there exists some $\d>0$ such that we can find an continuous upper barrier $\psi$ that satisfies
\begin{align*}
\psi &\geq u \text{ in $\R^n$},\\
\psi &\leq g(x_0) + \e \text{ in $B_\d(x_0)$}.
\end{align*}
It would imply that $u$ is upper semicontinuous at $x_0$ and a similar argument with lower barriers shows that $u$ is continuous at $x_0$.

By translating the system of coordinates to $(x_0,g(x_0))$ we can assume $(x_0,g(x_0))=(0,0)$. Let $\d_0>0$ such that $|g(x)| < \e/2$ for $x \in B_{\d_0}$ and assume that $B_{\d_0/2}((\d_0/2)e_1) \ss B_{\d_0} \cap (\R^n \sm \W)$ by rotating the system of coordinates. Consider for $r \in (0,\d_0/2)$,
\begin{align*}
\psi_r(x) = \frac{\max(\|g\|_\8,\|f\|_\8)}{\varphi(r_0)}\max\1\varphi(r_0),\varphi\1\frac{x-re_1}{r}\2\2  + \e/2,
\end{align*}
where $\varphi$ and $r_0$ are the ones from Lemma \ref{lemma:Barrier1}.

\begin{figure}[h]
  \includegraphics[width = 7cm]{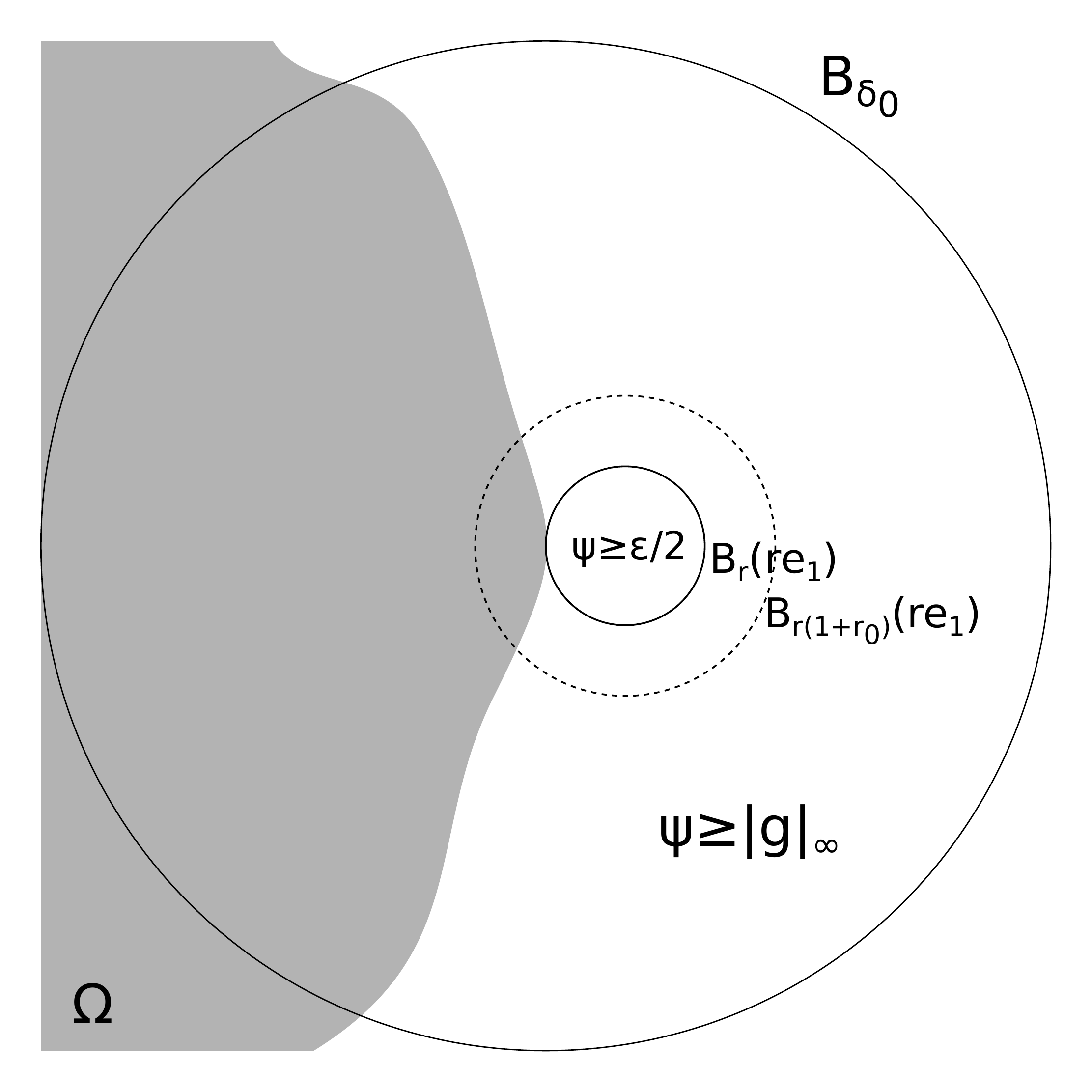}
  \caption{Barrier placement}
\end{figure}

By choosing $r = \d_0/(2+r_0)$ and $\psi = \psi_r = \psi_{\d_0/(2+r_0)}$, we obtain that $B_{r(1+r_0)}(re_1) \ss B_{\d_0/2}$ and then $\psi \geq g$ in $\R^n\sm\W$. Moreover $\psi \geq \|g\|_\8 \geq u$ in $\R^n \sm B_{r(1+r_0)}(re_1)$ which implies,
\begin{align*}
\psi \geq u \text{ in $\R^n \sm \1 B_{r(1+r_0)}(re_1) \cap \W\2$}.
\end{align*}

Because $\varphi$ is a supersolution outside $B_1$ we know that $\psi$ satisfies,
\begin{align*}
\cM^+_{\cL}\psi < -\|f\|_\8 \text{ in $B_{r(1+r_0)}(re_1) \sm \bar B_r(re_1) \supseteq B_{r(1+r_0)}(re_1) \cap \W$}.
\end{align*}
which by the comparison principle implies that $\psi \geq u$ in $\R^n$.

We use now the continuity of $\varphi$. There exists some $\d \in (0,r_0)$ such that
\begin{align*}
\varphi \leq \frac{\e\varphi(r_0)}{2\max(\|g\|_\8,\|f\|_\8)} \text{ in $B_{1+\d}$}.
\end{align*}
It implies that $\psi \leq \e$ in $B_\d$ with which we conclude the construction. 
\end{proof}

\section{Aleksandrov-Bakelman-Pucci estimate}\label{Sec:ABP_estimate}

In this section we prove an Aleksandrov-Bakelman-Pucci (ABP) estimate that start by assuming that the support of the positive part of the right hand side is well localized and concludes an estimate in measure which is also well localized. The main difficulty here is how to do that by having a gradient term with a bad sign. Lemma \ref{lemma:Barrier2} gives us a barrier that helps us to deal with it.

The first thing we do is to discuss the classical setup of the ABP estimate by introducing the convex envelope of the solution.

\begin{definition}[Convex envelope]\label{def:Convex_envelope}
For a function $u$ defined in $\R^n$, lower semicontinuous in $B_1$ and non negative in $\R^n \sm B_1$ we define the convex envelope $\G_u$ as the largest non positive convex function below $u$ in $B_{10}$, i.e.\ for $x \in B_1$
\begin{align*}
\G_u(x) &= \sup\{f(x): \text{$f$ is convex and $f \leq \min(u,0)$ in $B_{10}$}\},\\
&= \sup\{f(x): \text{$f$ is affine and $f \leq \min(u,0)$ in $B_{10}$}\}.
\end{align*}
For $x \in B_{10}\sm \R^n$ we just define $\G_u(x) = 0$. Whenever there is no chance of confusion we will use the notation $\G$ instead of $\G_u$.
\end{definition}

By the convexity of $\G$ we know that for any $x \in B_{10}$ there is always a plane that supports the graph of $\G$ at $(x,\G(x))$. By this we mean that for some $p \in \R^n$, $p\cdot(y-x) + \G(x) \leq \G(y)$ for every $y \in B_{10}$. We define in this way the (non empty) set of sub differentials of $\G$ at $x$, denoted by $D\G(x)$, as the set of slopes $p$ of the supporting planes of the graph of $\G$ at $(x,\G(x))$, i.e.\
\begin{align*}
D\G(x) = &\{p \in \R^n:p\cdot(y-x) + \G(x) \leq \G(y) \text{ for every $y \in B_{10}$}\}. 
\end{align*}
We denote also for $A \ss B_{10}$,
\begin{align*}
D\G(A) &= \bigcup_{x\in A}D\G(x),\\
\|D\G\|_{L^\8(A)} &= \sup\{|p|: p \in D\G(A)\}.
\end{align*}


\subsection{Hypothesis}

We can assume without lost of generality that $K^- = (2-\s)\l\chi_{B_4}|y|^{-(n+\s)}$. This follows by rescaling the equation by a factor of 4 which would spread the support of $K^-$. This assumption will be used in Lemma \ref{lemma:Barrier2}.

The hypothesis of this Section are the following ones. Let $f$ be a continuous function in $B_1$ bounded by above. Let $u$ satisfies,
\begin{alignat}{2}
\label{eq:ABP_hypothesis1} \cM^-_\cL u &\leq f &&\text{ in the viscosity sense in $B_1$},\\
\label{eq:ABP_hypothesis2} u &\geq 0 &&\text{ in $\R^n \sm B_1$},\\
\label{eq:ABP_hypothesis3} \sup_{B_1} u^- &\leq 1.
\end{alignat}
Eventually we will also require $\supp f^+$ to be contained in a ball $B_{\r_0}$ with $\r_0 \in (0,1)$ arbitrary for this part (it will be fixed in the proof of the point estimate).


\subsection{Main Lemma}

The main Lemma of this section and its first corollary are essentially the same as Lemma 8.1 in \cite{Caffarelli09}. We include its proof because of the technicalities we found when dealing with our difference operator $\d$, instead of the symmetric difference found in \cite{Caffarelli09}. We also choose to give a presentation where it can be noticed that the size of the configurations are not necessarily arbitrarily small. By this we mean that we can choose $k = \lfloor1/(2-\s)\rfloor$ in Corollary \ref{coro:Main_corollary}.

We fix now the following geometric configurations,
\begin{align*}
R_r(x) &= B_r(x) \sm B_{r/2}(x),\\
R_i(x) &= B_{r_i}(x) \sm B_{r_{i+1}}(x) \text{ for } r_i = \r_02^{-i}.
\end{align*}
For some $\r_0\in(0,1)$. We did not include the factor $2^{-1/(2-\s)}$ as in \cite{Caffarelli09} because we are not interested in recovering the classical ABP estimate as $\s$ goes to 2.

\begin{lemma}[Main Lemma]\label{lemma:Main_Lemma}
Given $\r_0\in(0,1)$, $u$ satisfying the hypothesis \eqref{eq:ABP_hypothesis1}, \eqref{eq:ABP_hypothesis2}, \eqref{eq:ABP_hypothesis3}, $P(x) = p\cdot x + h$ an affine function and $x_0 \in B_1$ and $M>0$ and $\m \in(0,1)$ such that:
\begin{enumerate}
\item $P \leq u$ in $B_{10}$,
\item $\sup_{B_1} P \geq -1$,
\item $M\m \r_0(1-2^{-(2-\s)k})\geq C$ for some universal $C>0$ to be fixed in the proof,
\item For every $i = 0,1,\ldots,(k-1)$,
\begin{align}\label{eq:Main_Lemma_hypotesis}
\frac{|\{u \geq P + (\|f^+\|_{L^\8(B_{r_k}(x_0))} + \b|p|)Mr_i^2\}\cap R_i(x_0)|}{|R_i(x_0)|} \geq \m.
\end{align}
\end{enumerate}
Then,
\begin{align}\label{eq:Main_Lemma_conclusion}
u \geq P + (\|f^+\|_{L^\8(B_{r_k}(x_0))} + \b|p|)r_k^2 \text{ in $B_{r_{k+1}}(x_0)$}.
\end{align}
\end{lemma}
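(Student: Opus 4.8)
The plan is to construct a suitable test function touching $u$ from below at an appropriate point and then use the viscosity inequality $\cM^-_\cL u \le f$ together with the scaling-invariance built into $\cL$. The natural ansatz, following Lemma 8.1 in \cite{Caffarelli09}, is to add to the plane $P$ a small paraboloid-type bump supported near $x_0$: set $w = P + c\,\psi$, where $\psi$ is a smooth function with $\psi \le 0$ on $B_{r_{k+1}}(x_0)^c$ intersected with the relevant region, $\psi$ strictly positive and of order $r_k^2$-type magnitude near $x_0$, and $c \sim (\|f^+\|_{L^\infty(B_{r_k}(x_0))} + \b|p|)$. The key point of the geometric configuration is that hypothesis (4) guarantees that on a definite fraction $\m$ of each annulus $R_i(x_0)$ the function $u$ lies above $P + c M r_i^2$, so in computing $\cM^-_\cL w$ at a contact point these annuli contribute a large \emph{positive} amount $\d^+(u,\cdot;y)K^-(y)$, which is precisely what competes against the negative gradient term $-\b|Dw|$ and the right-hand side $f$. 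Summing these positive contributions over $i = 0,\dots,k-1$ and using $r_i = \r_0 2^{-i}$ and the singularity $|y|^{-(n+\s)}$ of $K^-$ produces, after the geometric sum, a factor proportional to $M\m\r_0(1 - 2^{-(2-\s)k})$, which hypothesis (3) makes larger than any prescribed universal constant.

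The steps I would carry out, in order, are: first, argue by contradiction, supposing \eqref{eq:Main_Lemma_conclusion} fails, so there is a point $\bar x \in B_{r_{k+1}}(x_0)$ with $u(\bar x) < P(\bar x) + c r_k^2$. Second, build the competitor $w = P + c\psi$ with $\psi$ chosen so that (a) $\psi < 0$ outside $B_{r_{k+1}}(x_0)$ on $B_{10}$ minus a small ball — more precisely so that $w \le u$ holds on the region where we can control it and $w$ still has a point of interior contact with $u$ from below somewhere in $B_{r_{k+1}}(x_0)$ — and (b) $|D\psi|$, $\|\psi\|_{L^\infty}$, and the second-order behavior of $\psi$ are all under universal control after rescaling to unit size. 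Third, translate and dilate the configuration by the map $y \mapsto x_0 + r_k y$ so that the annuli $R_i(x_0)$ become annuli of radii $\r_0 2^{-(i-k)}$ around the origin; because $\cL$ is scaling invariant of order $\s$ (as established in the excerpt), the inequality $\cM^-_\cL u \le f$ rescales into an inequality of the same type with right-hand side bounded by $r_k^\s \|f^+\|_{L^\infty(B_{r_k}(x_0))}$ (up to the harmless factor $r_k^{\s-\a}\le 1$). Fourth, at the contact point apply the viscosity inequality to the test function obtained from $w$ (cutting off outside a small ball and gluing $u$, as in Definition~\ref{def:Viscosity_solutions}), giving $\cM^-_\cK w(\text{contact}) - \b|Dw| \le f$; then split $\cM^-_\cK$ as an integral, bound the contribution of $\d^-$ by the universal $C^{1,1}$ bound on $\psi$ plus the contribution from the plane (which is zero for an affine function away from the unit-ball cutoff — here one must be a little careful with the $\chi_{B_1}$ term in $\d$, which is exactly the technicality the author flags), and bound the contribution of $\d^+$ from below using hypothesis (4) on the annuli $R_i(x_0)$.

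The main obstacle I expect is the bookkeeping around the non-symmetric difference operator $\d(u,x;y) = u(x+y)-u(x)-Du(x)\cdot y\chi_{B_1}(y)$ rather than the symmetric second difference used in \cite{Caffarelli09}: the truncation at $B_1$ means that when we rescale, the gradient correction term does not simply disappear, and one has to track it carefully — this is the entire reason hypothesis \eqref{hypothesis_odd} and the parameter $\b$ enter, and why the conclusion and hypothesis (4) both carry the extra $\b|p|$ next to $\|f^+\|_\infty$. Concretely, one must verify that the contribution to $\cM^-_\cK w$ coming from $\d^-$ on the ``outer'' region $B_{10}\setminus\bigcup_i R_i(x_0)$, together with the gradient term $\b|Dw| \le \b(|p| + c\|D\psi\|_\infty)$, stays bounded by a universal constant times $c$ (after rescaling), so that it can be absorbed once hypothesis (3) makes the positive contribution dominate. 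The secondary technical point is choosing $\psi$ so that the contact point of $w$ with $u$ from below genuinely lies in the interior and so that $w$ actually sits below $u$ on all of $B_{10}$ after adding $P$ — this uses hypotheses (1) and (2) ($P \le u$ on $B_{10}$ and $\sup_{B_1} P \ge -1$, which together with $\sup_{B_1}u^- \le 1$ keeps everything at scale one) and is where the constant in (3) gets its universal value.
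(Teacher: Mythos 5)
Your proposal follows essentially the same route as the paper's proof: argue by contradiction, add to the plane a bump of height comparable to $C_0r_k^2$ (writing $C_0=\|f^+\|_{L^\infty(B_{r_k}(x_0))}+\beta|p|$) supported at scale $r_k$, locate a contact point, evaluate the extremal operator there in the principal-value/glued-test-function sense, bound the positive part from below over the annuli $R_i(x_0)$ via hypothesis (4) and Chebyshev to produce the factor $M\mu\rho_0(1-2^{-(2-\sigma)k})$, and absorb the drift and bump contributions into a universal multiple of $C_0$, contradicting hypothesis (3). The paper's implementation differs only in detail: it performs no rescaling, it works with the difference $u-v$ for $v=P\chi_{B_2}+2C_0r_k^2(\varphi-1/2)$ so the contact is with a constant (zero gradient at the contact point, the $\beta|p|$ term entering instead through $\mathcal{M}^-_{\mathcal{L}}v$), and the one step your sketch leaves vague --- the plane's contribution for $|y|>1$, where $\delta$ of an affine function does \emph{not} vanish (it vanishes only for $|y|\leq1$, thanks to the gradient correction) --- is disposed of by truncating the plane to $P\chi_{B_2}$ and symmetrizing against the even kernel, using hypotheses (1)--(2) together with \eqref{eq:ABP_hypothesis3} to get $-14/10\leq P\chi_{B_2}\leq-10/12$ in $B_1$, whereas your gluing-with-$u$ variant sidesteps this at the price of the $\delta^-$ bookkeeping you describe.
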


\begin{proof}
Denote $C_0 = \|f^+\|_{L^\8(B_{r_k}(x_0))} + \b|p|$ and let $\varphi = \varphi_0((\cdot-x_0)/r_k)$ be a smooth bump function taking values between zero and one such that, $\supp \varphi_0 = B_{3/4}$ and $\varphi_0 = 1$ in $B_{1/2}$. We want to use the following function $v$ as a lower barrier to show that $u - P$ separates a distance $C_0r_k^2$ from zero in $B_{r_{k+1}}(x_0)$. Let
\begin{align*}
v = P\chi_{B_2} + 2C_0r_k^2(\varphi - 1/2).
\end{align*}

By contradicting \eqref{eq:Main_Lemma_conclusion}, we have that $u - v$ has a negative infimum at some point $x_1 \in B_{3r_k/4}(x_0)$. We use comparison principle given by Corollary \ref{coro:Comparison_principle} and Lemma \ref{lemma:Principal_value_lemma} to estimate $\cM^-(u-v)(x_1)$ from above. On the other hand we can estimate $\cM^-(u-v)(x_1)$ from below by using the hypothesis \eqref{eq:Main_Lemma_hypotesis}. This will allows us to fix $C$ sufficiently large on the third hypothesis to get a contradiction.

From Corollary \ref{coro:Comparison_principle} applied in $B_1$,
\begin{align*}
\cM^-_\cL(u-v) &\leq f - \cM^-_\cL v,\\
&\leq f + \b|p| + 2C_0r_k^{2-\s}\|\cM^-_\cK\varphi_0\|_\8 + 2C_0\b r_k\|D\varphi_0\|_\8 - \cM^-_\cK(P\chi_{B_2}),\\
&\leq C_0C - \cM^-_\cK(P\chi_{B_2}).
\end{align*}
We estimate the last term in $B_1$ by using the two following observations for $x \in B_1$. For $y \in B_1$, $\d(P\chi_{B_2},x;y) = \d(P,x;y) = 0$. For $y \in \R^n \sm B_1$ we use that the integral gets symmetrized, (recall that $K^- = 0$ outside of $B_4$),
\begin{align*}
\cM^-_\cK(P\chi_{B_2}) &\geq -\L(2-\s)\int_{\R^n\sm B_4} \frac{\d_e(P\chi_{B_2},x;y)^-}{|y|^{n+\s}},\\
2\d_e(P\chi_{B_2},x;y) &= P\chi_{B_2}(x+y) + P\chi_{B_2}(x-y) - 2P\chi_{B_2}(x).
\end{align*}
We show now that $\d_e(P\chi_{B_2},x;y)$ is non negative for $x\in B_1$. This is immediate if $(x\pm y) \in B_2$ where $P\chi_{B_2}$ is affine. In the other case we use that $P$ is non positive in $B_{10}$ and crosses the level set $-1$ in $B_1$ in order to get that $P\chi_{B_2} \geq -14/10$ and $P\chi_{B_2} \leq -10/12$ in $B_1$. It implies that, if $(x+y) \in \R^n\sm B_2$, then
\begin{align*}
2\d_e(P\chi_{B_2},x;y) = P\chi_{B_2}(x-y) - 2P\chi_{B_2}(x) \geq -\frac{14}{10}+2\frac{10}{12} > 0.
\end{align*}
We conclude from here that $\cM^-_\cL(u-v) \leq C_0C$ holds in $B_1$ in the viscosity sense.

At the minimum $x_1$, $(u-v)$ can be touched by below by a constant function (zero gradient). By using Lemma \ref{lemma:Principal_value_lemma} we have that $\cM^-_\cL(u-v,0)(x_1)$ is defined in the principal value sense and $\cM^-_\cL(u-v,0)(x_1) \leq CC_0$ (Go to Lemma \ref{lemma:Principal_value_lemma} and its following Remark for the definition of $\cM^-_\cL(\cdot,0)$). Now we estimate $\cM^-_\cL(u-v,0)(x_1)$ from the other side using that $(u-v)-(u-v)(x_1) \geq (u-P)\chi_{B_2} \geq 0$,
\begin{align*}
\cM^-_\cL(u-v,0)(x_1) &= (2-\s)\l\lim_{\e\to0}\int_{\R^n\sm B_\e}\frac{(u-v)(x_1+y) - (u-v)(x_1)}{|y|^{n+\s}}dy,\\
&\geq (2-\s)\l\sum_{i=1}^{k-1}\int_{x_1 + y \in R_i(x_0)}\frac{(u-P)(x_1+y)}{|y|^{n+\s}}dy.
\end{align*}
Given that $x_1 \in B_{3r_k/4}(x_0)$ we obtain that for $(x_1+y)\in R_i(x_0)$ with $i=0,1,\ldots,(k-1)$, $|y| \sim r_i$. Applying Chebyshev and using the hypothesis \eqref{eq:Main_Lemma_hypotesis},
\begin{align*}
\cM^-_\cL(u-v,0)(x_1) &\geq (2-\s)\l\sum_{i=1}^{k-1}C_0Mr_i^{2-\s}\m,\\
&\geq (2-\s)\l C_0M\m\frac{r_0^{2-\s}-r_k^{2-\s}}{1-2^{-(2-\s)}},\\
&\geq CC_0M\m \r_0(1-2^{-(2-\s)k}).
\end{align*}
By combining the inequalities we obtain that $C \geq M\m \r_0(1-2^{-(2-\s)k})$ from which we can get the contradiction with the third hypothesis of the Lemma.
\end{proof}


The following Corollary is just a contrapositive of the previous Lemma combined with Corollary 8.5 in \cite{Caffarelli09}. We omit the proof. 

\begin{corollary}\label{coro:Main_corollary}
Given $\r_0\in(0,1)$, $u$ satisfying the hypothesis \eqref{eq:ABP_hypothesis1}, \eqref{eq:ABP_hypothesis2}, \eqref{eq:ABP_hypothesis3} and $x_0 \in\{u=\G\}$ with $P(x) = p\cdot x + h$ its supporting plane. Then, for $k = \lfloor1/(2-\s)\rfloor$ and $M$ sufficiently large there is some radius $r \in \{r_0,r_1,\ldots,r_{k-1}\}$ such that:
\begin{enumerate}
\item
\begin{align*}
\frac{|\{u\leq P + (\|f^+\|_{L^\8(B_{r_k}(x_0))} + \b|p|)(M/\r_0)r^2\}\cap R_r(x_0)|}{|R_r(x_0)|} \geq 4^{-n}.
\end{align*}
\item
\begin{align*}
\G \leq P + (\|f^+\|_{L^\8(B_{r_k}(x_0))} + \b|p|)(M/\r_0)r^2 \text{ in $B_{r/2}(x_0)$}.
\end{align*}
\item
\begin{align*}
\frac{|D\G(B_{r/4}(x_0))|}{|B_{r/4}(x_0)|} \leq \1(\|f^+\|_{L^\8(B_{r_k}(x_0))} + \b|p|)(M/\r_0)\2^n.
\end{align*}
\end{enumerate}
\end{corollary}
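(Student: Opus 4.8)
The plan is to derive Corollary \ref{coro:Main_corollary} from the Main Lemma by taking the contrapositive, in the same spirit as Corollary 8.5 in \cite{Caffarelli95}. Let $C_0 = \|f^+\|_{L^\infty(B_{r_k}(x_0))} + \b|p|$ denote the relevant right-hand-side constant, and let $P$ be the supporting plane of $\G$ at a contact point $x_0 \in \{u = \G\}$. First I would observe that since $x_0 \in \{u = \G\}$ and $P$ supports $\G$ there, the hypotheses (1) $P \leq u$ in $B_{10}$ and (2) $\sup_{B_1} P \geq -1$ of the Main Lemma are satisfied: the first is because $P \leq \G \leq u$, and the second because $\G \leq u$ and $\sup_{B_1} u^- \leq 1$ forces $\G \geq -1$ somewhere. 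I would fix $k = \lfloor 1/(2-\s)\rfloor$, so that $1 - 2^{-(2-\s)k}$ is bounded below by a universal constant (since $(2-\s)k$ lies in a bounded range away from $0$), and then choose $M$ large enough — depending only on universal constants and $\r_0$ — so that the third hypothesis $M\m\r_0(1 - 2^{-(2-\s)k}) \geq C$ of the Main Lemma holds with $\m = 4^{-n}$.

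With these choices fixed, the contrapositive of the Main Lemma reads: if the conclusion \eqref{eq:Main_Lemma_conclusion} fails — which it does at $x_0$ itself, since $\G(x_0) = u(x_0) = P(x_0)$ so $u(x_0) = P(x_0) < P(x_0) + C_0 r_k^2$ provided $C_0 > 0$ (and if $C_0 = 0$ the statement is handled separately, or one perturbs) — then at least one of the hypotheses (1)–(4) must fail. Since (1), (2), (3) hold by construction, hypothesis (4) must fail for some index $i \in \{0,1,\dots,k-1\}$: there is a radius $r = r_i$ with
\begin{align*}
\frac{|\{u \geq P + C_0 M r_i^2\} \cap R_i(x_0)|}{|R_i(x_0)|} < 4^{-n}.
\end{align*}
Passing to complements within $R_i(x_0) = R_r(x_0)$ gives
\begin{align*}
\frac{|\{u \leq P + C_0 M r^2\} \cap R_r(x_0)|}{|R_r(x_0)|} \geq 1 - 4^{-n} \geq 4^{-n},
\end{align*}
which, after renaming $M/\r_0$ in place of $M$ to match the normalization in the statement, is assertion (1). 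Assertion (2) then follows from a standard convexity argument: on the annular region where $u$ exceeds the plane $P$ by more than $C_0 M r^2 /\r_0$ — a set of large proportion in $R_r(x_0)$ — the convex envelope $\G$, being the largest convex minorant, cannot be too far above $P$ in the inner ball $B_{r/2}(x_0)$; quantitatively one uses that $\G$ is squeezed between $P$ and the values of $u$ on the surrounding annulus, exactly as in Lemma 8.4/8.5 of \cite{Caffarelli95}.

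For assertion (3), the estimate on $|D\G(B_{r/4}(x_0))|$, I would invoke the area-formula / measure-of-the-image bound for the sub-differential of a convex function, which says that the Lebesgue measure of $D\G(B_{r/4}(x_0))$ is controlled by $r^n$ times the supremum of the "second-order" behavior of $\G$ on a slightly larger ball; combined with assertion (2), which pins $\G$ to within $O(C_0 (M/\r_0) r^2)$ of an affine function on $B_{r/2}(x_0)$, one gets that the gradients $D\G$ over $B_{r/4}(x_0)$ fill out a set of measure at most $(C_0 (M/\r_0))^n |B_{r/4}(x_0)|$ — this is precisely the content of Corollary 8.5 in \cite{Caffarelli95}, which applies verbatim once assertions (1) and (2) are in hand. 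The main obstacle in all of this is purely bookkeeping: one must track that the constant $C$ produced inside the proof of the Main Lemma really is universal and independent of $\s$ (this is where the factor $(2-\s)$ and the choice $k = \lfloor 1/(2-\s)\rfloor$ do their work, keeping $(2-\s)k$ in a compact interval), and that the normalization $M \mapsto M/\r_0$ is consistent throughout; there is no new analytic difficulty beyond what the Main Lemma and the cited convexity lemmas already provide, which is why the proof is omitted in the text.
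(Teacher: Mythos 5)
Your proposal is correct and follows essentially the paper's intended (omitted) argument: check hypotheses (1)--(3) of Lemma \ref{lemma:Main_Lemma} at the contact point (with $M/\r_0$ playing the role of $M$ and $k=\lfloor 1/(2-\s)\rfloor$ keeping $1-2^{-(2-\s)k}$ universally bounded below), use the failure of its conclusion at $x_0$ to negate hypothesis (4) for some $r=r_i$, and then upgrade the resulting ring estimate to conclusions (2) and (3) by the standard convex-envelope facts (flatness of $\G$ above its supporting plane on $B_{r/2}(x_0)$, and the resulting bound on the measure of the subdifferential image over $B_{r/4}(x_0)$). The only slip is bibliographic: the convexity lemmas you invoke are Lemma 8.4/Corollary 8.5 of \cite{Caffarelli09} (Caffarelli--Silvestre), not of \cite{Caffarelli95}.
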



The following barrier allow us to control how does $D\G$ spread from the support of $f^+$.

\begin{lemma}\label{lemma:Barrier2}
Given $u$ satisfying the hypothesis \eqref{eq:ABP_hypothesis1}, \eqref{eq:ABP_hypothesis2} and \eqref{eq:ABP_hypothesis3}, $\r_0 \in (0,1)$ and $\supp(f^+) \ss B_{\r_0}$ then there is some $\a>2$ sufficiently large and independent of $\s$ such that,
\begin{align*}
\varphi(x) = \1\1\frac{(|x|-\r_0)^+}{2}\2^\a-1\2\chi_{B_2}(x) \leq u \text{ in $B_1$}.
\end{align*}
\end{lemma}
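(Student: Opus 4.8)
The plan is to construct the radial barrier $\varphi$ explicitly and use the comparison principle (Corollary \ref{coro:Comparison_principle}) together with the maximum principle to conclude $\varphi \le u$ in $B_1$. The key point is that $\varphi$ is supported in $B_2$, is non-positive everywhere, equals $-1$ on $\bar B_{\rho_0}$, and is $\ge 0$ on $\R^n \setminus B_1$ (in fact it vanishes on a neighborhood of $\partial B_1$ once $\alpha$ is large, since $((|x|-\rho_0)/2)^\alpha \ge 1$ exactly when $|x| \ge \rho_0 + 2$, but we only need $\varphi \le 0 \le u$ on $\R^n \setminus B_1$; note that $\varphi \le 0$ on all of $\R^n$ because $((|x|-\rho_0)^+/2)^\alpha - 1 \le 0$ precisely in $B_{\rho_0+2} \supseteq B_2$, and outside $B_2$ we set $\varphi = 0$). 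So the exterior inequality $\varphi \le u$ on $\R^n \setminus B_1$ follows from \eqref{eq:ABP_hypothesis2}. What remains is to show $\varphi \le u$ on $B_1$, and for this I would show that $\varphi$ is a strict subsolution of the extremal equation on $B_1 \setminus \bar B_{\rho_0}$ while being bounded above by $u$ on $\bar B_{\rho_0}$ (where $\varphi \equiv -1 \le -\sup_{B_1} u^- \le u$ by \eqref{eq:ABP_hypothesis3}).

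First I would record that on $\bar B_{\rho_0}$ we have $\varphi = -1 \le u$ directly from hypothesis \eqref{eq:ABP_hypothesis3}. Then on the annular region $B_1 \setminus \bar B_{\rho_0}$, where $\supp f^+$ does not meet, we know $\mathcal{M}^-_{\cL} u \le f \le 0$ in the viscosity sense. The goal is to choose $\alpha > 2$ (large, independent of $\sigma$) so that $\mathcal{M}^+_{\cL} \varphi > 0$ in $B_1 \setminus \bar B_{\rho_0}$ — or more precisely a strict supersolution inequality — which then lets the comparison principle be applied in the annulus with exterior data (on $\bar B_{\rho_0}$ and on $\R^n \setminus B_1$) already dominated by $u$. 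To get $\mathcal{M}^+_{\cL}\varphi > 0$ on the annulus I would compute the contributions: near the boundary of $B_{\rho_0}$ the function $\varphi$ behaves like $-1 + c(|x|-\rho_0)^\alpha$, so the positive part of the second difference $\delta^+(\varphi,x;y)$ dominates (since $\varphi$ is increasing and convex-like in the radial variable there once $\alpha > 2$), giving a large positive contribution from $\mathcal{M}^+_{\cK}\varphi$, while the drift term $\beta|D\varphi|$ is controlled because $|D\varphi| \sim \alpha(|x|-\rho_0)^{\alpha-1}$ is small near $\partial B_{\rho_0}$ and the $\sigma$-scaling hypothesis \eqref{hypothesis_odd} keeps $\beta$ uniform; one picks $\alpha$ large so the $(2-\sigma)|y|^{-(n+\sigma)}$-weighted gain beats everything. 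The uniform-in-$\sigma$ choice of $\alpha$ is exactly the kind of estimate made possible by the $(2-\sigma)$ normalization and would mirror the computation for $\varphi_R$ in Assumption 5.1 discussed before Corollary \ref{coro:Comparison_principle}.

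Concretely, I would proceed in the following steps. Step 1: verify $\varphi \le 0$ on $\R^n$ and $\varphi = 0$ on $\R^n \setminus B_2$, so $\varphi \le u$ on $\R^n \setminus B_1$ by \eqref{eq:ABP_hypothesis2}. Step 2: verify $\varphi = -1 \le u$ on $\bar B_{\rho_0}$ by \eqref{eq:ABP_hypothesis3}. Step 3: by radial symmetry and scaling reduce the subsolution check to a single representative point, say $x = t e_1$ with $t \in (\rho_0, 1)$, and estimate $\mathcal{M}^+_{\cL}\varphi(x)$ from below; split the defining integral into the region $B_{r(x)}$ (with $r(x)$ comparable to $\operatorname{dist}(x, \partial(B_1 \setminus \bar B_{\rho_0}))$) where the $C^{1,1}$ expansion $\delta(\varphi, x; y) = \tfrac12 y^t D^2\varphi(x) y + o(|y|^2)$ applies, and the far region, whose contribution is bounded by $\|\varphi\|_\infty$ times a convergent integral, hence $O(1)$ uniformly in $\sigma$. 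Step 4: observe that for $\alpha$ large the near-region contribution to $\mathcal{M}^+_{\cK}\varphi$ grows like $\alpha^2$ (through the $D^2\varphi$ term in the radial direction) while $\beta|D\varphi|$ grows only like $\alpha$, so choose $\alpha$ large, independent of $\sigma$, to make $\mathcal{M}^+_{\cL}\varphi > 0$ throughout $B_1 \setminus \bar B_{\rho_0}$. Step 5: apply Corollary \ref{coro:Comparison_principle} (comparison) in $B_1 \setminus \bar B_{\rho_0}$ between the subsolution $\varphi$ and the supersolution $u$ of $\mathcal{M}^-_{\cL}(\cdot) \le 0$ — using that $u$ is a supersolution there since $f \le 0$ off $\supp f^+ \subseteq B_{\rho_0}$, and $\mathcal{M}^-_{\cL}(\varphi) \ge \mathcal{M}^+_{\cL}(\varphi) - 2\beta|D\varphi| $ is not quite the right direction, so instead one uses directly that $\varphi$ is a strict classical subsolution of $\mathcal{M}^+_{\cL}\varphi \ge 0$, hence a subsolution of $\mathcal{M}^-_{\cL}\varphi \le$ anything it needs, and the ordering on the complement of the annulus forces $\varphi \le u$ inside. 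The main obstacle is Step 4: verifying quantitatively that the diffusive gain in $\mathcal{M}^+_{\cK}\varphi$ near $\partial B_{\rho_0}$ dominates both the drift term $\beta|D\varphi|$ and the (possibly negative) far-field contribution, with a choice of $\alpha$ uniform in $\sigma \in [1,2)$; one must be careful that near $\partial B_1$ as well $\varphi$ is still increasing radially, so $\delta^+$ dominates there too, and that the non-symmetric difference $\delta$ rather than the symmetrized one does not spoil the sign — this is handled exactly as in the treatment of $\cM^-_\cK(P\chi_{B_2})$ in the proof of Lemma \ref{lemma:Main_Lemma}, by passing the tail through the positivity of $K^+$.
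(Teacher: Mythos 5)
Your overall reduction (ordering $\varphi\leq u$ on $\bar B_{\r_0}$ by \eqref{eq:ABP_hypothesis3} and on $\R^n\sm B_1$ by \eqref{eq:ABP_hypothesis2}, plus a strict subsolution property of $\varphi$ on the annulus and comparison) is the paper's strategy, but there are two genuine gaps. First, you aim at $\cM^+_\cL\varphi>0$, which is the wrong extremal operator. Since $u$ only satisfies $\cM^-_\cL u\leq f\leq 0$ in $B_1\sm\bar B_{\r_0}$, the comparison step needs $\cM^-_\cL\varphi>0$ there, so that by Theorem \ref{Eqdiff} one gets $\cM^-_\cL(u-\varphi)\leq \cM^-_\cL u-\cM^-_\cL\varphi\leq 0$ in the viscosity sense and the maximum principle applies. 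Positivity of $\cM^+_\cL\varphi$ is strictly weaker (always $\cM^+_\cL\geq\cM^-_\cL$), and your Step 5 attempt to convert it (``hence a subsolution of $\cM^-_\cL\varphi\leq$ anything it needs'') is not a valid implication; this is exactly where the argument breaks. The paper proves $\cM^-_\cL\varphi>0$ in $B_1\sm\bar B_{\r_0}$.

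Second, the quantitative scheme of Steps 3--4 does not close as stated. Near $\p B_{\r_0}$, writing $d=|x|-\r_0$, both the drift $\b|D\varphi|\sim\b\a 2^{-\a}d^{\a-1}$ and the local second-order gain, which is of order $(2-\s)\l\a^2 2^{-\a}d^{\a-\s}$, degenerate as $d\to0$, so a far-field contribution that you only bound by $O(1)$ in absolute value cannot be absorbed by the local term; one must use the sign structure (convexity of $\varphi$ in $B_2$ for $|y|<1$, and $\varphi\leq-1/2$ in $B_1$ together with $\varphi\geq-1$ for the tail) to make the nonlocal contributions nonnegative, as the paper does. Moreover, even after repairing the signs, a purely local comparison of the $D^2\varphi$ gain against the drift forces $\a$ of size at least $C\b/((2-\s)\l)$ because the kernels carry the $(2-\s)$ normalization while $\b$ does not, so your single unified argument does not give $\a$ independent of $\s$. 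The paper achieves uniformity by splitting cases: for $\s$ near $2$ it uses Lemma \ref{lemma:limit_to_the_classic}, where the gain is $\l(\a-1)$ against $\b$ with no $(2-\s)$ loss; for $\s\in[1,2-\e)$ it extracts a fixed positive contribution from the ring $B_4\sm B_3$ (this is where the normalization $K^-=(2-\s)\l\chi_{B_4}|y|^{-(n+\s)}$, assumed at the start of Section \ref{Sec:ABP_estimate}, is used) and beats the drift because $\b|D\varphi|\leq\b\a2^{-\a}$ is small for large $\a$. Note also that your claim that the drift ``grows like $\a$'' overlooks the $2^{-\a}$ factor: it is precisely the smallness of $|D\varphi|$ for large $\a$ that makes the choice of $\a$ possible.
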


\begin{proof}
We will show that $\cM^-_{\cL} \varphi > 0$ in $B_1 \sm \bar B_{\r_0}$, for $\a$ sufficiently large. Notice also that $\varphi \leq u$ in the complement of $B_1 \sm \bar B_{\r_0}$. The reason for this is because $\varphi \leq 0$ everywhere and $u$ can not take its minimum value $-1$ outside of the support of $f^+$. After proving that $\cM^-_{\cL} \varphi > 0$ in $B_1 \sm \bar B_{\r_0}$ we would get the conclusion $\varphi \leq u$ by applying the comparison principle in $B_1\sm \bar B_{\r_0}$.

Fix $x\in B_1 \sm \bar B_{\r_0}$. By using Lemma \ref{lemma:limit_to_the_classic} we have that, as $\s$ goes to two, $\cM^-_\cL\varphi$ goes to $\tilde\cM^- D^2\varphi - \b|D\varphi|$. By a standard computation for $x \in B_1\sm \bar B_{\r_0}$, $\a > 2$ and $|x| = r$,
\begin{align*}
\cM^- D^2\varphi(x) - \b|D\varphi(x)| &= \frac{\a(r - \r_0)^{\a-2}}{2^\a}\1\l\1(\a-1) + (n-1)\frac{r - \r_0}{r}\2\right.,\\
&\left. {} - \b(r-\r_0)\2,\\
&\geq \frac{\a(r - \r_0)^{\a-2}}{2^\a}\1 \l(\a-1) - \b\2.
\end{align*}
So for $\a-1 > \b/\l$ and $\s \in [2-\e,2)$, for some $\e$ sufficiently small, we get that $\cM^-_{\cL} \varphi > 0$ in $B_1 \sm \bar B_{\r_0}$.

Now we look at the case where $\s \in [1,2-\e)$. Fix $x\in B_1 \sm \bar B_{\r_0}$. In $B_2$, $\varphi$ is convex and then $\d(\varphi,x;y) \geq 0$ for $y \in B_1$. Therefore we just consider the integrals outside $B_1$ where it gets symmetrized 
\begin{align*}
\cM^-_\cK \varphi &\geq (2-\s)\int_{\R^n \sm B_1} \frac{\l\chi_{B_4}(y)\d_e(\varphi,x;y)^+ - \L\d_e(\varphi,x;y)^-}{|y|^{n+\s}}dy,\\
&\geq \e\int_{B_4 \sm B_1} \frac{\l\d_e(\varphi,x;y)^+ - \L\d_e(\varphi,x;y)^-}{|y|^{n+\s}}dy.
\end{align*}
For $x\in B_1$, the symmetric difference $\d_e(\varphi,x;y)$ is always non negative. If $(x\pm y) \in B_2$ then if follows by the convexity of $\varphi$ in $B_2$. In the other case we use that $\varphi \leq -1/2$ in $B_1$ (because $\a>1$) and $\varphi \geq -1$ in $\R^n$. If $(x+y) \in \R^n\sm B_2$ then,
\begin{align*}
2\d_e(\varphi,x;y) = \varphi(x-y) - 2\varphi(x) \geq -1 + 2\frac{1}{2} = 0.
\end{align*}
The previous estimate then can be continued in the following way,
\begin{align*}
\cM^-_\cK \varphi &\geq \e\l \int_{B_4 \sm B_1} \frac{\d_e(\varphi,x;y)^+}{|y|^{n+\s}}dy,\\
&\geq \frac{\e\l}{2} \int_{B_4\sm B_3} \frac{dy}{|y|^{n+\s}}dy,\\
&\geq \frac{\e\l|\p B_1|}{2}(3^{-\s} - 4^{-\s}).
\end{align*}
The last member of the inequality remains bounded from below by a constant $C>0$ uniformly for $\s \in [1,2-\e)$. Now we just have to choose $\a$ sufficiently large to make the remaining drift term sufficiently small in $B_1 \sm \bar B_{\r_0}$,
\begin{align*}
\cM^-_\cL \varphi &= \cM^-_\cK \varphi - \b|D\varphi|,\\
&\geq C - \b2^{-\a}\a(r-\r_0)^{\a-1},\\
&\geq C - \b2^{-\a}\a
\end{align*}
We conclude the Lemma in this case by chosing $\a$ such that $C/\b>\a2^{-\a}$.
\end{proof}


The following estimate results as a combination of Theorem 8.7 and the beginning of the proof of Lemma 10.1 in \cite{Caffarelli09} adapted to our case by using the barrier from Lemma \ref{lemma:Barrier2}

\begin{theorem}[ABP estimate]\label{thm:ABP_estimate}
Let $u$ satisfies the hypothesis \eqref{eq:ABP_hypothesis1}, \eqref{eq:ABP_hypothesis2} and \eqref{eq:ABP_hypothesis3}, $\r_0 \in (0,1)$ and $\supp(f^+) \ss B_{\r_0}$. Then, for some universal constants $c$ and $C$, independent of $\s$,
\begin{align*}
c\r_0^{n\a} \leq (\|f^+\|_\8^n + 1) |\{u \leq \G + C\r_0^{-1}(\|f^+\|_\8 + 1)\} \cap B|,
\end{align*}
where $B$ is the ball of radius $(2+16\sqrt{n})\r_0$ and the exponent $\a$ is the one from Lemma \ref{lemma:Barrier2}.
\end{theorem}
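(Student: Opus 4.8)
The plan is to run the convex-envelope ABP scheme of \cite{Caffarelli09}, using Lemma \ref{lemma:Barrier2} to confine the useful part of the contact set inside $B=B_{(2+16\sqrt n)\r_0}$ and to control how deeply the convex envelope dips there. We may assume $\r_0$ is below a universal threshold, since when $(2+16\sqrt n)\r_0\ge1$ the ball $B$ already contains $\bar B_1$ and the bound follows from the classical ABP estimate applied to the whole contact set. We also normalize $\sup_{B_1}u^-=1$, so $\inf_{B_1}u=-1$; by the argument in the proof of Lemma \ref{lemma:Barrier2} this value is attained inside $\supp f^+\ss B_{\r_0}$.

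Extending $\varphi$ from Lemma \ref{lemma:Barrier2} by zero outside $B_2$ gives $\varphi\le u$ on $\R^n$ (outside $B_1$ because $\varphi\le0\le u$), hence the convex envelope $\G=\G_u$ of $\min(u,0)$ over $B_{10}$ dominates the convex envelope $\G_\varphi$ of $\varphi$. As $\varphi$ is convex and non-decreasing in $|x|$ on $B_2$, $\G_\varphi$ agrees with $\varphi$ on a ball of radius of order one around $B_{\r_0}$; from $\G\ge\G_\varphi$ one reads off: (i) $\{\G=-1\}\ss\bar B_{\r_0}$, so $\G$ attains its minimum $-1$ inside $\bar B_{\r_0}$; and (ii) $\G(x)\ge-1+c_0\r_0^\a$ whenever $(1+16\sqrt n)\r_0\le|x|\le 3/2$, with $c_0>0$ universal. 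Moreover, $\G$ being convex with $-1\le\G\le0$ on $B_{10}$, every $p\in D\G(\bar B_1)$ has $|p|\le C$ universally.

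The core step is the localized lower bound $B_{c_1\r_0^{\a-1}}\ss D\G(\{u=\G\}\cap B)$ for a universal $c_1>0$. Indeed, for $|q|\le c_1\r_0^{\a-1}$, slide $x\mapsto q\cdot x+t$ upward until it first touches the graph of $\G$; comparing $\G(x)-q\cdot x$ near the bottom of the valley (value about $-1$, inside $\bar B_{\r_0}$) with its values on $\p B_{(1+16\sqrt n)\r_0}$ (at least $-1+c_0\r_0^\a-C|q|\r_0$ by (ii)) and on $\p B_{10}$, the smallness of $|q|$ forces the touching set into $B_{(1+16\sqrt n)\r_0}$; and since any affine piece of $\G$ of such a small slope, issuing from near the bottom, would violate $\G\ge\varphi=((|x|-\r_0)^+/2)^\a-1$ already at $|x|$ of order $\r_0$, its exposed contact points lie in $B$ as well. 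Therefore $|D\G(\{u=\G\}\cap B)|\ge c\,\r_0^{n(\a-1)}$. This is the only point where the non-local drift genuinely interferes: without symmetry the gradient of $\G$ could escape any small ball, and it is the sub-barrier of Lemma \ref{lemma:Barrier2}---with $\a$ picked large enough to dominate $\b|D\varphi|$---that rules this out; the factor $16\sqrt n$ enters through the passage between balls and the cubes used next.

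It remains to convert this into the stated measure estimate by the covering argument behind Theorem 8.7 in \cite{Caffarelli09}. For each $x_0\in\{u=\G\}\cap B_{(1+16\sqrt n)\r_0}$, apply Corollary \ref{coro:Main_corollary} with $P$ its supporting plane; since $C_0:=\|f^+\|_{L^\8(B_{r_k}(x_0))}+\b|p|\le C(\|f^+\|_\8+1)$, $r\le\r_0$, and the internal constant $M$ may be taken universal, we have $C_0(M/\r_0)r^2\le C\r_0^{-1}(\|f^+\|_\8+1)$. Hence Corollary \ref{coro:Main_corollary}(1) puts a $4^{-n}$-fraction of $R_r(x_0)\ss B$ into $\{u\le\G+C\r_0^{-1}(\|f^+\|_\8+1)\}$, while Corollary \ref{coro:Main_corollary}(3) gives $|D\G(B_{r/4}(x_0))|\le(C(\|f^+\|_\8+1)/\r_0)^n|B_{r/4}(x_0)|$. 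Passing to the disjoint cubes $Q_j$ produced by the Calder\'on--Zygmund argument of \cite{Caffarelli09} (Corollary 8.5 there) that cover $\{u=\G\}\cap B_{(1+16\sqrt n)\r_0}$ and summing, the bound $|D\G(\{u=\G\}\cap B_{(1+16\sqrt n)\r_0})|\ge c\,\r_0^{n(\a-1)}$ forces $\sum_j|Q_j|\ge c\,\r_0^{n(\a-1)}\,\r_0^{n}\,(\|f^+\|_\8+1)^{-n}$, and then the density property yields
\begin{align*}
\big|\{u\le\G+C\r_0^{-1}(\|f^+\|_\8+1)\}\cap B\big|\ \ge\ \frac{c\,\r_0^{n\a}}{(\|f^+\|_\8+1)^n}\ \ge\ \frac{c\,\r_0^{n\a}}{2^{n-1}(\|f^+\|_\8^{\,n}+1)},
\end{align*}
which is the assertion. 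The main obstacle is the localization step: showing, quantitatively and using only $\G\ge\varphi$, that the gradient of $\G$ cannot escape $B_{(2+16\sqrt n)\r_0}$; this is where the geometry of the barrier and the drift term are reconciled, the rest being the scale bookkeeping already present in \cite{Caffarelli09}.
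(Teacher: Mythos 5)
Your proof is correct and follows essentially the same route as the paper's: the barrier of Lemma \ref{lemma:Barrier2} is used to show that every slope of size at most $c\r_0^{\a-1}$ is attained by $\G$ at contact points inside a ball of radius comparable to $\r_0$ (giving $|D\G|\geq c\r_0^{n(\a-1)}$ there), and then the Calder\'on--Zygmund covering based on Corollary \ref{coro:Main_corollary}, with $|p|\leq C$ from the normalization, converts this into the stated measure bound exactly as in the paper. Your extra care about where the sliding planes touch and about the extreme points of the affine pieces of $\G$ only makes explicit steps the paper leaves implicit, so no substantive difference.
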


\begin{proof}
We use the barrier from Lemma \ref{lemma:Barrier2} in order to say that, for some universal $c$,
\begin{align}\label{eq:ABP}
D\G(B_{2\r_0}) \supseteq B_{c\r_0^{\a-1}}.
\end{align}
This can be justified by noticing that any plane with slope less than $\frac{(\r_0/2)^\a}{\r_0/3}$ can be brought from below such that it touches $\min(\varphi,\varphi(2\r_0))$ in $B_{\r_0}$ and is above $-1$ in $B_{\r_0}$. Therefore it crosses $u$ and can be translated down until it touches $u$ for the first time inside $B_{2\r_0}$.

\begin{figure}[h]
  \includegraphics[width = 9cm]{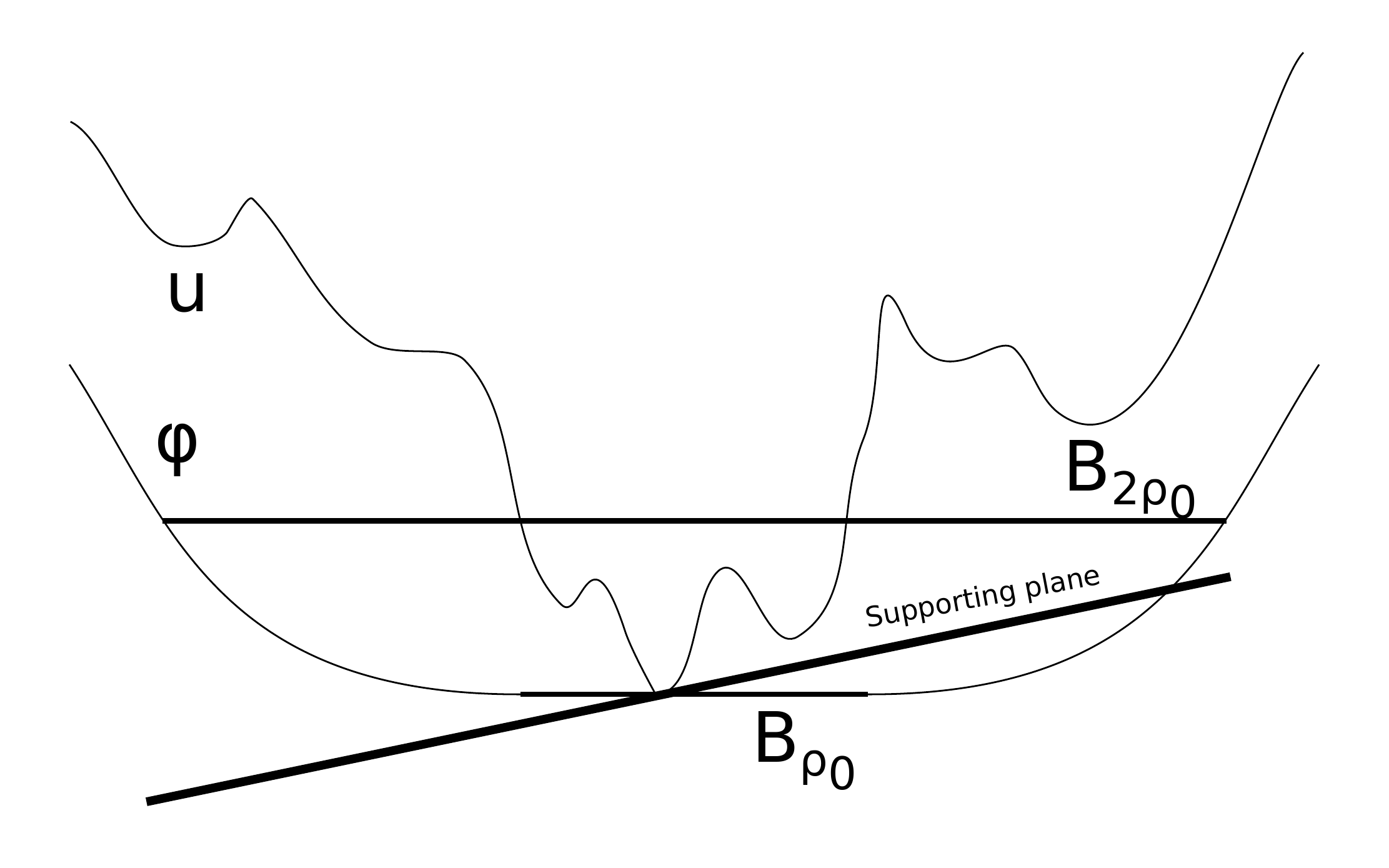}
\end{figure}

For the covering we start by finding a family $\{Q_i\}$ of $\{u=\G\} \cap B_{2\r_0}$ of dyadic cubes as in Theorem 10.1 in \cite{Caffarelli09} such that,
\begin{enumerate}
\item They are pairwise disjoints and every cube intersects $\{u=\G\} \cap B_{2\r_0}$.
\item They cover $\{u=\G\}\cap B_{2\r_0}$ and are all contained in $B_{3\r_0}$.
\item For some universal $C>0$,
\begin{align*}
\frac{|D\G(Q_i)|}{|Q_i|} \leq C\r_0^{-n}\1\|f^+\|_\8^n + 1\2.
\end{align*}
\item For some universal $\m \in (0,1)$, sufficiently small, and $C>0$, 
\begin{align*}
\frac{|\{u \leq \G + C\r_0^{-1}(\|f^+\|_\8 + 1)\}\cap 16\sqrt{n}Q_i|}{|16\sqrt{n}Q_i|} \geq \m
\end{align*}
\end{enumerate}
Notice that we have estimated $|p| \leq \|D\G\|_{L^\8(B_1)}$ by one because of the hypothesis \eqref{eq:ABP_hypothesis3}, combined with the fact that the support of $\G$ is sufficiently large.

By \eqref{eq:ABP} there exists a universal constant $c>0$ such that
\begin{align*}
c\r_0^{n(\a-1)} &\leq \sum |D\G(Q_i)|,\\
&\leq C\r_0^{-n}\1\|f^+\|_\8^n + 1\2 \sum |Q_i|,\\
&= C\r_0^{-n}\1\|f^+\|_\8^n + 1\2 \left|\bigcup Q_i\right|.
\end{align*}
Also $\{16\sqrt{n}Q_i\}$ covers $\bigcup Q_i$ and each one of this cubes is contained in $B$, the ball with radius $(2+16\sqrt{n})\r_0$. We extract then a sub covering $\{16\sqrt{n}\tilde Q_i\}$ with the finite intersection property and continue the estimate with,
\begin{align*}
\left|\bigcup Q_i\right| &\leq \sum |16\sqrt{n}\tilde Q_i|,\\
&\leq C\sum |\{u \leq \G + C\r_0^{-1}(\|f^+\|_\8 + 1)\}\cap 16\sqrt{n}\tilde Q_i|,\\
&\leq C\left|\{u \leq \G + C\r_0^{-1}(\|f^+\|_\8 + 1)\}\cap B\right|.
\end{align*}
Which concludes the proof.
\end{proof}

\section{H\"older regularity}\label{Sec:Holder_regularity}

By having an ABP estimate we can obtain as consequences a point estimate ($L^\e$ lemma, weak Harnack), a Harnack inequality and a H\"older modulus of continuity. We go directly to the H\"older regularity from the point estimate. We expect the Harnack inequality to hold similarly however we don't require it in our regularity result which follows \cite{Caffarelli09}.


\subsection{Point estimate}

A point estimate gives a way to control the distribution of positive super solutions by the infimum of the such solution. Next we fix some of the renormalized hypothesis.

\subsubsection{Hypothesis}

The hypothesis of this part need to be stated in domains sufficiently large such that we have enough room to construct a barrier which will localize the right hand side of the equation. We assume the following: Let $u$ satisfies,
\begin{alignat}{2}
\label{eq:PE_hypothesis1} \cM^-_\cL u &\leq 1 &&\text{ in the viscosity sense in $B_{4\sqrt{n}}$},\\
\label{eq:PE_hypothesis2} u &\geq 0 &&\text{ in $\R^n$},\\
\label{eq:PE_hypothesis3} \inf_{Q_3} u &\leq 1.
\end{alignat}
Because we need to apply a dilated version of the ABP estimate we will also be assuming that $K^- = (2-\s)\l\chi_{B_{16\sqrt{n}}}|y|^{-(n+\s)}$.


\subsubsection{A special function}

\begin{lemma}[Barrier]\label{lemma:Barrier3}
There exists some exponent $p>0$ sufficiently large and some radius $r_0 \in (0,2)$ sufficiently small, both independent of the order $\s \in [1,2)$, such that the function $\varphi = \min(|x|^{-p},r_0^{-p})$ satisfies $\cM^-_\cL \varphi > 0$ in $\R^n \sm B_2$.
\end{lemma}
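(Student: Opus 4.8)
The statement to prove is Lemma~\ref{lemma:Barrier3}: for $p>0$ large and $r_0\in(0,2)$ small, both uniform in $\s\in[1,2)$, the function $\varphi=\min(|x|^{-p},r_0^{-p})$ satisfies $\cM^-_\cL\varphi>0$ in $\R^n\sm B_2$. I would follow exactly the same two-regime strategy already used in the proof of Lemma~\ref{lemma:Barrier2}: split $\s\in[2-\e,2)$ (where one passes to the limit via Lemma~\ref{lemma:limit_to_the_classic} and works with the classical second-order extremal operator) from $\s\in[1,2-\e)$ (where one keeps a genuine factor $(2-\s)\geq\e>0$ in front of the integral and argues directly). In both regimes the sign of $\cM^-_\cL\varphi=\cM^-_\cK\varphi-\b|D\varphi|$ at a point $x$ with $|x|=r\geq 2$ is governed by a competition between the (positive) diffusive contribution coming from the strict convexity/superharmonicity of the power $|x|^{-p}$ and the (negative) drift term $\b|D\varphi|$, and the point is that choosing $p$ large makes the ratio of diffusion to drift blow up.

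\textbf{The second-order regime.} For $\s$ close to $2$, by Lemma~\ref{lemma:limit_to_the_classic} the quantity $\cM^-_\cK\varphi(x)$ converges, as $\s\to2$, to $|\p B_1|\,\tilde\cM^- D^2\varphi(x)$, which is comparable to $\cM^- D^2\varphi(x)$. Since $|x|^{-p}$ is smooth away from the origin, a direct computation at $|x|=r\geq 2$ gives $D^2(|x|^{-p})$ with radial eigenvalue $p(p+1)r^{-p-2}$ and tangential eigenvalues $-p\,r^{-p-2}$ (multiplicity $n-1$), so $\tilde\cM^- D^2\varphi(x)=r^{-p-2}\bigl(\l\,p(p+1)-\L(n-1)p\bigr)$ up to a positive constant, and $|D\varphi(x)|=p\,r^{-p-1}\leq p\,r^{-p-2}\cdot r$. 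Because we work in $\R^n\sm B_2$ where $r\geq 2$, the drift contributes at most something like $\b\,p\,r^{-p-1}$. Choosing $p$ with $\l(p+1)>\L(n-1)+\b\cdot(\text{something uniform in }r\text{ on }r\geq2)$ — and here one uses that $|D\varphi|/(\text{Hessian scale})=O(1/r)=O(1)$ — makes the limit strictly positive, uniformly on compact subsets of $\R^n\sm\bar B_2$; one then fixes $\e$ small so that $\cM^-_\cL\varphi>0$ for all $\s\in[2-\e,2)$. (One also has to check the outer tail of $\varphi$ does not ruin this, but since $\varphi$ is bounded and the kernel integrable away from the origin, and since $\varphi$ is constant for $|x|\leq r_0$, this is harmless; this is precisely the place where taking $r_0$ small and using Lemma~\ref{lemma:limit_to_the_classic}'s hypothesis $\varphi\in C^2(x)\cap L^\infty$ is relevant.)

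\textbf{The lower-order regime.} For $\s\in[1,2-\e)$ I keep the factor $(2-\s)\geq\e$ and estimate $\cM^-_\cK\varphi(x)$ directly. Since $\varphi$ restricted to a neighborhood of $x$ (with $|x|\geq 2$) is smooth and, for $p$ large, behaves like $|x|^{-p}$ which is convex in the radial direction but concave tangentially, the near-origin-of-$y$ part of $\d(\varphi,x;y)$ is controlled by $y^tD^2\varphi(x)y$ whose good (radial) part is of order $p^2$ and whose bad (tangential) part is only of order $p$, so for $p$ large the near part is favorable up to the usual $(2-\s)$ factor. For the part where $|y|$ is not small one wants $\d_e(\varphi,x;y)=\tfrac12(\varphi(x+y)+\varphi(x-y)-2\varphi(x))\geq 0$ for a definite range of $y$, which holds because $\varphi$ is a decreasing radial profile that is bounded ($\leq r_0^{-p}$) and $\varphi(x)$ is already small ($\leq 2^{-p}$) when $|x|\geq2$, so translating to a point far away can only increase $\varphi$; exactly as in Lemma~\ref{lemma:Barrier2} one extracts a fixed annulus of $y$'s contributing a lower bound $\geq c\,\e\,\l\,r^{-p}$ with $c$ universal. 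Comparing with $\b|D\varphi(x)|=\b\,p\,r^{-p-1}\leq \tfrac12\b\,p\,r^{-p}$ (using $r\geq2$), one sees that after dividing by $r^{-p}$ the diffusive lower bound is a universal constant while the drift is $O(\b p)$ — so this naive splitting is \emph{not} enough, and the correct bookkeeping must instead compare the diffusive gain, which genuinely scales like $p^2$ from the radial Hessian eigenvalue, against the drift which scales like $p$; hence $p$ large wins. This is the step I expect to be the main obstacle: one must be careful to capture the $p^2$ gain from the radial second derivative (not just the crude annulus estimate) while simultaneously keeping the tangential $-p$ loss and the outer tail under control, all uniformly in $\s\in[1,2-\e)$ and in $r\geq2$. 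Once the constants are arranged so that, say, $\e\l\,c\,p^2-(\text{tangential loss})-\b p-(\text{tail})>0$, choosing $p$ large and then $r_0$ small (so that $\varphi$ is constant on a large enough ball that the tail and the jump at $|y|$ crossing $B_{r_0}$ are controlled) finishes the proof. Combining the two regimes over the fixed $\e$ yields the claim uniformly for all $\s\in[1,2)$.
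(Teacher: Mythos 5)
Your two-regime skeleton (pass to the classical limit via Lemma \ref{lemma:limit_to_the_classic} for $\s$ near $2$; keep the factor $(2-\s)\geq\e$ otherwise) is the same as the paper's, and your Hessian computation at a point of $\p B_2$ is essentially the paper's computation at $2e_1$. But there are two genuine gaps. First, in the regime $\s\in[1,2-\e)$ your substitute mechanism does not close. The quadratic approximation $\d(\varphi,x;y)\approx \tfrac12 y^tD^2\varphi(x)y$ is only valid for $|y|$ of order $r/p$ (the derivatives of $t^{-p}$ grow by a fixed factor over radial increments of size $r/p$), so the diffusive gain you can extract from the radial eigenvalue is of order $(2-\s)\,\l\,p^2 r^{-(p+2)}(r/p)^{2-\s}$, i.e.\ of order $\e\,\l\,p^{\s}r^{-p-\s}$, while the drift costs $\b\,p\,r^{-(p+1)}$. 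At $\s=1$ the ratio of gain to drift is of order $\e\l/\b$, independent of $p$: ``$p$ large wins'' fails exactly in the critical case $\D^{1/2}u+|Du|$ that the lemma is designed for (and the required $p$ blows up as $\s\downarrow 1$). The paper's actual argument in this regime is different and you never invoke it: since $p\geq n$ makes $|y|^{-p}$ non-integrable at the origin, the positive contribution $\int_{B_1(-2e_1)}\d^+(\varphi,2e_1;y)K^-(y)\,dy$ tends to infinity as $r_0\to0$ (this uses the standing assumption of that section that $K^-=(2-\s)\l\chi_{B_{16\sqrt{n}}}|y|^{-(n+\s)}$, so the positive part of the extremal kernel reaches the origin from $2e_1$), while the integral over $\R^n\sm B_1(-2e_1)$ is bounded below uniformly in $p$ and $r_0$; one then chooses $r_0$ small to dominate $\b p2^{-(p+1)}$. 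In your write-up the smallness of $r_0$ is treated as a nuisance (``tail and jump \ldots controlled''), whereas it is the engine of the proof; you also have the geometry backwards, since small $r_0$ makes the plateau smaller and higher, not ``constant on a large enough ball''.

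Second, the reduction in $r$. The inequality must hold on all of $\R^n\sm B_2$, and your uniformity claim is incorrect: $|D\varphi|$ divided by the Hessian scale $p\,r^{-(p+2)}$ equals $r$, not $O(1/r)$, so for fixed $p$ the drift beats the local second-order gain once $r$ is of order $\l p/\b$; ``uniformly on compact subsets of $\R^n\sm\bar B_2$'' does not give the statement. The paper avoids this entirely by first using rotational symmetry and the scale invariance of the class $\cL$ (together with monotonicity of $\cM^-_\cL$ at a contact point) to reduce the whole lemma to the single point $2e_1$, and only then running the two regimes there. Without that reduction, or some replacement for it, both of your regimes leave the large-$r$ case unproved.
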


\begin{proof}
By the rotational symmetry of $\varphi$ and the kernels $K^\pm$ we only need to show that the inequality gets satisfies for $x = re_1$ with $r\geq2$. It is also enough to prove it for $r=2$. For $r>2$ one would obtain it from the scaling of $\cL$ in the following way. Let $\varphi_r = (r-2)^{-p}\varphi(x/(r-2))$, then $\varphi_r \geq \varphi$, they coincide in $2e_1$ and from the previous considerations about the scaling of $\cL$,
\begin{align*}
\cM^-_\cL \varphi(re_1) &\geq C(r)\cM^-_\cL \varphi_r(2e_1) \geq C(r)\cM^-_\cL \varphi(2e_1),
\end{align*}
for some $C(r) >0$.

We prove the result now for $\s$ close to 2. From Lemma \ref{lemma:limit_to_the_classic},
\begin{align}\label{pucciTilde}
\lim_{\s\to2}\cM_\cK^- \varphi(2e_1) = \int_{\p B_1} \1\1\theta^t D^2 \varphi(2e_1)\theta\2^+ \l - \1\theta^t D^2\varphi(e_1)\theta\2^- \L \2d\theta,
\end{align}
where,
\begin{align*}
D^2 \varphi(e_1) = p2^{-(p+2)}\diag(p,-1,\ldots,-1).
\end{align*}
Therefore for every $p>p_0$, with $p_0$ sufficiently large depending on $\l,\L$ and $\b$, we can make the left hand side of \eqref{pucciTilde} greater that $p2^{-(p+2)}3\b$. It implies that there exists some $\e>0$ such that for every $\s \in (2-\e,2)$ we also have that $\cM_\cK^- \varphi(2e_1) > p2^{-(p+2)}2\b$. By taking $p_0$ even larger we get,
\begin{align*}
\cM_\cL^- \varphi(2e_1) = \cM_\cK^- \varphi(2e_1) - \b p2^{-(p+1)} > \b p2^{-(p+1)} > 0.
\end{align*}

For $\s \in[1,2-\e]$ we use the fact that for $p \geq n$, $|y|^{-p}$ is not integrable around the origin.
\begin{align*}
\cM_\cK^- \varphi(2e_1) &\geq \e \int \frac{\d^+(\varphi,2e_1;y)\l\chi_{B_1}(y) - \d^-(\varphi,2e_1;y)\L}{|y|^{n+\s}}dy,\\
&\geq \e \int_{B_1(-2e_1)} + \e\int_{\R^n \sm B_1(-2e_1)}.
\end{align*}
The first integral grows towards infinity as $r_0$ goes to zero if $p \geq n$. The last integral is always bounded by below because $\varphi \in C^2(B_1(2e_1)) \cap L^1(\w_1)$ uniformly as $p$ goes to infinity and $r_0$ goes to zero. Therefore for $r_0$ sufficiently small and $p$ even larger we can also get $\cM_\cK^- \varphi(2e_1) > p2^{-(p+2)}2\b$ and conclude as in the previous case.
\end{proof}


\subsubsection{Discrete point estimate}

\begin{lemma}[First estimate]\label{lemma:First_estimate}
Given $u$ satisfying the hypothesis \eqref{eq:PE_hypothesis1}, \eqref{eq:PE_hypothesis2}, \eqref{eq:PE_hypothesis3}  then for some universal constants $M>1$ and $\m \in(0,1)$,
\begin{align*}
\frac{|\{u > M\} \cap Q_1|}{|Q_1|} < \m 
\end{align*}
\end{lemma}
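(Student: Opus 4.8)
The plan is to apply the ABP estimate (Theorem \ref{thm:ABP_estimate}) to a suitable modification of $u$, the standard Krylov--Safonov trick. First I would use the barrier $\varphi$ from Lemma \ref{lemma:Barrier3} to localize: set $w = u - (1 + \|\varphi\|_\infty^{-1}\cdot\text{(const)})\varphi$ or, more precisely, subtract a scaled copy of $\min(|x|^{-p}, r_0^{-p})$ so that the new function $v$ is non-negative outside a small ball $B_{\rho_0}$ (with $\rho_0$ chosen so that $B_{\rho_0} \subset Q_3 \subset B_{4\sqrt n}$) and still satisfies $\mathcal{M}^-_\cL v \leq 1$ inside $B_{4\sqrt n}$ while having $\supp f^+ \subset B_{\rho_0}$ after accounting for the fact that $\mathcal{M}^-_\cL\varphi > 0$ outside $B_2$. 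The point of Lemma \ref{lemma:Barrier3} is exactly that subtracting such a supersolution-type function does not spoil the differential inequality in the region where we need it, but forces the right-hand side (the ``obstacle'') to be supported in a small ball, which is the hypothesis $\supp(f^+) \subset B_{\rho_0}$ required by Theorem \ref{thm:ABP_estimate}.

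Next I would apply the (rescaled) ABP estimate of Theorem \ref{thm:ABP_estimate} to $v$ on the appropriate ball. Using the normalization $\inf_{Q_3} u \leq 1$ together with $u \geq 0$ and the bound $\mathcal{M}^-_\cL u \leq 1$, one checks that the hypotheses \eqref{eq:ABP_hypothesis1}--\eqref{eq:ABP_hypothesis3} hold for $v$ (after the harmless dilation that moves $K^-$ to be supported in $B_{16\sqrt n}$, as flagged in the hypotheses of this subsection). Since $\|f^+\|_\infty$ is bounded by a universal constant here (the right-hand side is just $1$ plus contributions from the barrier, all controlled because $\varphi$ is smooth away from the origin), the ABP estimate yields
\begin{align*}
c \leq C\,\big|\{v \leq \Gamma_v + C\} \cap B\big|,
\end{align*}
i.e.\ the contact set of $v$ with its convex envelope has measure bounded below by a universal constant, and on that set $v$ — hence $u$, since $v \leq u$ up to a bounded additive constant — is controlled from above by a universal constant, say $u \leq M$. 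Intersecting with $Q_1$ and passing to complements gives $|\{u > M\} \cap Q_1| < (1-\text{const})|Q_1| =: \mu |Q_1|$.

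The main obstacle I anticipate is bookkeeping the geometry and the constants: one must arrange the radii so that $Q_1$ (where we want the conclusion), $Q_3$ (where the infimum is controlled), the small ball $B_{\rho_0}$ (support of the localized obstacle), the ball $B_2$ outside which Lemma \ref{lemma:Barrier3} applies, and the ball $B$ of radius $(2 + 16\sqrt n)\rho_0$ appearing in Theorem \ref{thm:ABP_estimate} all fit together inside $B_{4\sqrt n}$ where the equation is assumed to hold — this is precisely why the hypotheses of this subsection are stated on the large ball $B_{4\sqrt n}$ rather than $B_1$. A secondary technical point is verifying that the function $v$ obtained by subtracting the barrier is still an admissible subsolution for the ABP machinery, in particular that it lies in $L^1(\omega_\sigma)$ and that the normalization $\sup v^- \leq 1$ (or a universal constant) holds after rescaling; both follow from $u \geq 0$, $\inf_{Q_3} u \leq 1$, and the explicit form of $\varphi$, but the constants need to be tracked so that $M$ and $\mu$ come out universal and independent of $\sigma$.
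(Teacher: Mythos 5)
Your overall strategy is the same as the paper's: build a barrier from the special function of Lemma \ref{lemma:Barrier3}, subtract it from $u$, rescale, feed the result into the ABP estimate of Theorem \ref{thm:ABP_estimate}, and read off a lower bound for $|\{u\le M\}\cap Q_1|$. However, the specification of the barrier has two genuine gaps. First, you cannot simply subtract a scaled copy of $\min(|x|^{-p},r_0^{-p})$: that function is strictly positive everywhere, so $v$ can become negative wherever $u$ is small, in particular outside $B_{4\sqrt n}$, and hypothesis \eqref{eq:ABP_hypothesis2} (non-negativity outside the rescaled \emph{unit} ball, i.e.\ outside the large ball) fails. Your claim that $v$ is ``non-negative outside a small ball $B_{\rho_0}$'' is neither guaranteed nor what the ABP estimate asks for: non-negativity is needed outside the large ball, while only the localization of the new right-hand side belongs to the small ball. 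The paper therefore truncates the barrier to vanish outside $B_{4\sqrt n}$ (and mollifies, since $\min(|x|^{-p},r_0^{-p})$ is only Lipschitz at $|x|=r_0$, so that the new right-hand side is bounded and continuous), and one must recheck that the inequality $\cM^-_\cL\psi>2$ on the annulus survives the truncation. Second, the barrier must satisfy $\psi\ge 2$ on $B_{3\sqrt n}\supseteq Q_3$. This is where hypothesis \eqref{eq:PE_hypothesis3} really enters: together with $\inf_{Q_3}u\le 1$ it forces $u-\psi$ to drop below a fixed negative level, so the convex envelope $\G_v$ is nontrivial and the contact-set measure bound actually bites. In your write-up \eqref{eq:PE_hypothesis3} is invoked only for the normalization $\sup v^-\le 1$, which is the opposite (and harmless) direction; without pinning $\psi\ge 2$ on $Q_3$ nothing prevents $v$ from being bounded below by a large positive constant, in which case $\G_v\equiv 0$, the set $\{v\le \G_v+C\}\cap B$ can be empty, and the step ``$c\le C|\{v\le\G_v+C\}\cap B|$'' has no content (think of $u$ a huge constant, which is excluded precisely by \eqref{eq:PE_hypothesis3}).

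Two smaller points. The function of Lemma \ref{lemma:Barrier3} is a strict \emph{subsolution}, $\cM^-_\cL\varphi>0$, not ``supersolution-type''; it is exactly this sign that makes the new right-hand side $1-\cM^-_\cL\psi$ nonpositive away from the tiny ball, via $\cM^-_\cL(u-\psi)\le f-\cM^-_\cL\psi$. Moreover the rescaling must be a \emph{compression} $\varphi(C_1x)$ with $C_1$ large, so that the region $B_{2/C_1}$ where the subsolution property is not known shrinks into the small ball $B_{\rho_0}$, with $\rho_0$ chosen so that after the final dilation by $4\sqrt n$ the ABP ball of radius $(2+16\sqrt n)\rho_0$ lands inside the cube where the density estimate is claimed; listing ``the ball $B_2$'' among the regions that must fit inside $B_{4\sqrt n}$ suggests this compression is not yet in place. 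Once the barrier is truncated, $\ge 2$ on $Q_3$, and a strict subsolution outside $B_{\rho_0}$, the remainder of your argument (rescale, apply Theorem \ref{thm:ABP_estimate}, use $\G_v\le 0$ and the boundedness of the barrier to convert $v\le\G_v+C$ into $u\le M$) matches the paper's proof.
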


\begin{proof}
We construct from the function $\varphi$ in Lemma \ref{lemma:Barrier3} a non negative smooth barrier $\psi$, bounded by above, such that,
\begin{enumerate}
\item $\psi = 0$ in $\R^n \sm B_{4\sqrt{n}}$,
\item $\psi \geq 2$ in $B_{3\sqrt{n}} \supseteq Q_3$,
\item $\cM^-_\cL \psi > 2$ in $B_{4\sqrt{n}}\sm B_{1/(4+32\sqrt{n})}$.
\end{enumerate}
The following function (almost) does the work for some constants $C_0$ and $C_1$, 
\begin{align*}
\tilde\psi(x) &= C_0\varphi(C_1x).
\end{align*}
$C_1$ has to be chosen such that it concentrates the negative part of $\cM^-_\cL \tilde\psi$ inside $B_{1/(5+32\sqrt{n})}$. $C_0$ is chosen such that $\tilde\psi > 3$ in $B_{5\sqrt{n}}$ and $\cM^-_\cL \tilde\psi > 3$ in $B_{5\sqrt{n}}\sm B_{1/(4+32\sqrt{n})}$. Then we convolve $\tilde \psi$ with a smooth function in order to have a continuous right hand side. We also need to truncate it to make it zero outside $\R^n \sm B_{4\sqrt{n}}$. Clearly this can be made such that $\psi$ gives us all the conditions required.

Now we consider,
\begin{align*}
v(x) = \frac{(u - \psi)(4\sqrt{n}x)}{(4\sqrt{n})^\s}
\end{align*}
and verify the hypothesis of the ABP estimate with right hand side,
\begin{align*}
\cM_\cL^- v \leq 1 - \cM_\cL^-\psi(4\sqrt{n}\,\cdot) = \tilde f.
\end{align*}
Where $\tilde f^+$ is supported inside of $B_{\r_0}$ with $\r_0 = 1/(16\sqrt{n}+128n)$ and it is bounded by above by a universal constant. Then we have that for some universal constants,
\begin{align*}
(1-\m) \leq |\{v \leq \G_v + C\} \cap B_{(2+16\sqrt{n})\r_0}|.
\end{align*}
Which implies for $u$ that,
\begin{align*}
(1-\m) \leq |\{u \leq M\} \cap B_{1/2}| \leq |\{u \leq M\} \cap Q_1|.
\end{align*}
\end{proof}

By applying the previous Lemma at smaller scales and combining it with Lemma 4.2 in \cite{Caffarelli95} we get the following result. Here the fact that $\cL$ remains invariant by dilations is used in a fundamental way. 

\begin{corollary}[Discrete point estimate]\label{coro:Discrete_point_estimate}
Given $u$ satisfying the hypothesis \eqref{eq:PE_hypothesis1}, \eqref{eq:PE_hypothesis2}, \eqref{eq:PE_hypothesis3}  then for some $M>1$ and $\m \in(0,1)$ sufficiently small and every integer $k \geq 1$,
\begin{align*}
\frac{|\{u > M^k\} \cap Q_1|}{|Q_1|} \leq \m^k
\end{align*}
\end{corollary}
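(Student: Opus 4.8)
The plan is to iterate Lemma \ref{lemma:First_estimate} using a Calder\'on–Zygmund decomposition in the spirit of Lemma 4.2 in \cite{Caffarelli95}, exploiting the scale invariance of $\cL$ (order $\s$) to rescale the hypotheses at every dyadic level. We argue by induction on $k$. The base case $k=1$ is exactly Lemma \ref{lemma:First_estimate}. For the inductive step, assume the bound holds for $k$ with universal constants $M$ and $\m$ (the same $M,\m$ produced by the First estimate). Set $A = \{u > M^{k+1}\} \cap Q_1$ and $B = \{u > M\} \cap Q_1$. The key claim is that $|A| \le \m |B|$, which combined with Lemma \ref{lemma:First_estimate} (giving $|B| < \m |Q_1|$) and the inductive hypothesis applied to the renormalized function yields $|A| \le \m \cdot \m^k |Q_1| = \m^{k+1}|Q_1|$.

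To prove $|A| \le \m|B|$ I would apply the Calder\'on–Zygmund cube decomposition to $Q_1$: dyadically subdivide $Q_1$, keeping a cube $Q$ when $|A \cap Q|/|Q| > \m$ (actually one should select cubes where the density first exceeds $\m$, so that the predecessor $\tilde Q$ of any selected $Q$ satisfies $|A \cap \tilde Q|/|\tilde Q| \le \m$). Then $A \subseteq \bigcup Q$ up to a null set, and by the stopping condition $|A| \le \m \sum |\tilde Q| $ — wait, more precisely $|A| \le \m \sum |Q| \cdot (\text{bounded overlap factor})$; the clean statement from \cite{Caffarelli95} is that it suffices to show each selected cube $Q$ (more precisely its dyadic parent) satisfies $\tilde Q \subseteq B$. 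So the real content is: \emph{if $\tilde Q$ is a dyadic subcube of $Q_1$ with $|\{u > M^{k+1}\} \cap \tilde Q| > \m |\tilde Q|$ for a child $Q$ of $\tilde Q$, then $\tilde Q \subseteq \{u > M\}$}, i.e.\ $u > M$ on all of $\tilde Q$.

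The mechanism for this implication is a rescaling argument. Suppose toward a contradiction that $\tilde Q = Q_{1/2^j}(x_0)$ has a point $\bar x$ with $u(\bar x) \le M$. Consider $\tilde u(x) = u(x_0 + 2^{-j} x)/M^k$; using $\s \le 2$ and $2^{-j} < 1$, the scaling relations for $\cL$ in \eqref{scaling} (with the factor $r^{\s-\a} \le 1$ absorbing constants, and the drift term staying controlled thanks to hypothesis (\ref{hypothesis_odd})) show that $\tilde u$ satisfies $\cM^-_\cL \tilde u \le 1$ in a ball of the appropriate size containing $B_{4\sqrt n}$-scaled geometry, $\tilde u \ge 0$ everywhere, and $\inf_{Q_3} \tilde u \le u(\bar x)/M^k \le M/M^k \le 1$ provided $k\ge1$ and $M>1$ (here one needs $\bar x$ to lie inside the appropriately scaled $Q_3$, which is why the cube $Q_3$, rather than a point, appears in the hypotheses — one must arrange the overlap geometry so that $\bar x \in$ scaled $Q_3$; this is the standard $Q_1$ vs $Q_3$ dilation bookkeeping of \cite{Caffarelli95}). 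Then Lemma \ref{lemma:First_estimate} applies to $\tilde u$ and gives $|\{\tilde u > M\} \cap Q_1|/|Q_1| < \m$, i.e.\ $|\{u > M^{k+1}\} \cap \tilde Q|/|\tilde Q| < \m$, contradicting the selection of $Q$. Hence $\tilde Q \subseteq \{u > M\}$, completing the Calder\'on–Zygmund step and the induction.

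The main obstacle is purely geometric bookkeeping: making sure that when one rescales a selected dyadic cube $\tilde Q \subset Q_1$ so that it becomes $Q_1$, the enlarged domains $B_{4\sqrt n}$ (where $\cM^-_\cL \le 1$ is needed) and $Q_3$ (which must contain the low-value point $\bar x$) still sit inside the region where the original hypotheses are valid — this forces the stopping rule to be phrased in terms of the parent cube and uses that the hypothesis is posed on the generous domain $B_{4\sqrt n}$ rather than $B_1$, and it is exactly why the statement is posed with $Q_3$ and $Q_1$ rather than $B_1$. The scaling of the drift term requires only a routine check that hypothesis (\ref{hypothesis_odd}) is preserved under dilation, which was already verified in the excerpt. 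Everything else — the dyadic decomposition and the overlap counting — is the standard argument of Lemma 4.2 in \cite{Caffarelli95}, which is why I would simply invoke it once the rescaling implication above is in place.
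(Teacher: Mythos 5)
Your overall strategy is the one the paper has in mind: iterate Lemma \ref{lemma:First_estimate} through the Calder\'on--Zygmund lemma (Lemma 4.2 of \cite{Caffarelli95}), using the dilation invariance of $\cL$ (including hypothesis (\ref{hypothesis_odd}) for the drift) to rescale the hypotheses on each dyadic cube. However, the bookkeeping of your inductive step is wrong in two places that feed each other, and as written the step does not close. First, with your choice $B=\{u>M\}\cap Q_1$ the Calder\'on--Zygmund lemma can only yield $|\{u>M^{k+1}\}\cap Q_1|\leq \m|B|\leq \m^2|Q_1|$, not $\m^{k+1}|Q_1|$; the sentence ``$|B|<\m|Q_1|$ and the inductive hypothesis give $\m\cdot\m^k$'' conflates two different sets. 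You need $B=\{u>M^k\}\cap Q_1$, so that the inductive hypothesis bounds $|B|$ by $\m^k|Q_1|$, and correspondingly the contradiction point must satisfy $u(\bar x)\leq M^k$, not $u(\bar x)\leq M$ (the normalization $\tilde u = M^{-k}u$ then gives $\inf \tilde u\leq 1$ exactly, which is why the level $M^k$ is the right one).

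Second, and more seriously, your contradiction never materializes because you anchor the rescaling at the predecessor $\tilde Q$. Mapping $\tilde Q$ to $Q_1$, the First estimate gives $|\{u>M^{k+1}\}\cap \tilde Q|<\m|\tilde Q|$, which does not contradict the selection criterion $|\{u>M^{k+1}\}\cap Q|>\m|Q|$ on the child (recall $|Q|=2^{-n}|\tilde Q|$); in fact it is precisely the stopping-rule property that $\tilde Q$ already satisfies, so nothing is gained. The correct normalization sends the selected cube $Q=Q_{2^{-j}}(y_0)$ to $Q_1$, i.e.\ $\tilde u(x)=M^{-k}u(y_0+2^{-j}x)$; then the dyadic parent $\tilde Q$ is contained in the concentric tripling of $Q$, hence the low-value point $\bar x\in\tilde Q$ lands in the scaled $Q_3$ --- this is exactly the role of $Q_3$ in \eqref{eq:PE_hypothesis3} --- while $\cM^-_\cL\tilde u\leq 2^{-j\s}M^{-k}\leq 1$ holds in the scaled $B_{4\sqrt n}$, which stays inside the original $B_{4\sqrt n}$ since $2^{-j}\leq 1/2$ and $y_0\in Q_1$. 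Lemma \ref{lemma:First_estimate} then gives $|\{u>M^{k+1}\}\cap Q|<\m|Q|$, contradicting the selection of $Q$ and proving $\tilde Q\subseteq\{u>M^k\}$. With these two corrections (level $M^k$ for $B$ and for the contact point, rescaling anchored at $Q$ rather than $\tilde Q$), the remainder of your argument --- the cube decomposition of \cite{Caffarelli95} and the scale-invariance check for $\cL$ --- goes through and coincides with the paper's intended proof.
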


By rescaling the previous result and a standard covering argument we obtain a full point estimate.

\begin{theorem}[Point estimate]\label{thm:Point_estimate}
Let $u\geq 0$ satisfies,
\begin{align*}
\cM^-_\cL u &\leq f \text{ in the viscosity sense in $B_1$}.
\end{align*}
Then for some $\e,C>0$ universal constants and every $t>0$,
\begin{align*}
\frac{|\{u > t\} \cap B_{1/4}|}{|B_{1/4}|} \leq C\1\|f^+\|_\8 + \inf_{B_{1/2}} u\2^\e t^{-\e}
\end{align*}
\end{theorem}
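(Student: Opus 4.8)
The plan is to derive the full point estimate from the discrete version (Corollary~\ref{coro:Discrete_point_estimate}) by a combination of rescaling and a covering argument, exactly in the spirit of the passage from the $L^\e$-lemma to the weak Harnack inequality in \cite{Caffarelli09,Caffarelli95}. First I would reduce to the normalized situation: given $u \geq 0$ with $\cM^-_\cL u \leq f$ in $B_1$, set
\[
\bar u = \frac{u}{\|f^+\|_\8 + \inf_{B_{1/2}} u + \d}
\]
for $\d > 0$ (to be sent to zero at the end), so that $\bar u \geq 0$, $\inf_{B_{1/2}} \bar u \leq 1$, and $\cM^-_\cL \bar u \leq \bar f$ with $\|\bar f^+\|_\8 \leq 1$. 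The scaling property of $\cL$ (order $\s \geq 1$, with the right-hand side only improving under $r \in (0,1)$ dilations) guarantees this rescaled inequality is of the same admissible type.

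Next I would run the standard covering argument. Pick a point $x_* \in B_{1/2}$ with $\bar u(x_*)$ close to $\inf_{B_{1/2}}\bar u \leq 1$, so by a Harnack-chain-type / direct covering of $B_{1/4}$ by finitely many small cubes, together with the scale invariance, one reduces to the configuration of the hypotheses \eqref{eq:PE_hypothesis1}, \eqref{eq:PE_hypothesis2}, \eqref{eq:PE_hypothesis3}: after a translation and a dilation by a universal factor, $\cM^-_\cL \bar u \leq 1$ in $B_{4\sqrt n}$ and $\inf_{Q_3} \bar u \leq 1$. Here one uses that $\bar u \geq 0$ in all of $\R^n$, which is available. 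Then Corollary~\ref{coro:Discrete_point_estimate} gives universal $M > 1$, $\m \in (0,1)$ with
\[
\frac{|\{\bar u > M^k\} \cap Q_1|}{|Q_1|} \leq \m^k \quad \text{for every integer } k \geq 1.
\]
Converting this geometric decay into an $L^\e$ bound is routine: choosing $\e$ so that $M^\e \m < 1$ and interpolating between the dyadic levels $t \in [M^k, M^{k+1})$ yields
\[
\frac{|\{\bar u > t\} \cap Q_1|}{|Q_1|} \leq C t^{-\e}
\]
for all $t > 0$ (the case $t \leq 1$ being trivial since the left side is at most $1$). Unwinding the normalization and covering $B_{1/4}$ by a universal number of such unit cubes (rescaled back) gives
\[
\frac{|\{u > t\} \cap B_{1/4}|}{|B_{1/4}|} \leq C\bigl(\|f^+\|_\8 + \inf_{B_{1/2}} u + \d\bigr)^\e t^{-\e},
\]
and letting $\d \to 0$ finishes the proof.

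The one genuinely delicate point is that Corollary~\ref{coro:Discrete_point_estimate} (hence Lemma~\ref{lemma:First_estimate} and the ABP estimate behind it) is stated with a \emph{fixed} large enough domain $B_{4\sqrt n}$ and a specific enlarged support assumption on $K^-$, namely $K^-=(2-\s)\l\chi_{B_{16\sqrt n}}|y|^{-(n+\s)}$. So the covering/rescaling argument must be arranged so that every small cube in $B_{1/4}$, after dilating up to the reference scale, still sees the equation on all of $B_{4\sqrt n}$ — this forces us to work from the start with $\cM^-_\cL u \leq f$ known on a neighborhood large enough that dilations of $B_{1/4}$'s subcubes fit inside $B_1$; the sign condition $u \geq 0$ on \emph{all} of $\R^n$ (not just a ball) is exactly what makes the nonlocal tails harmless under these dilations. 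I expect the bookkeeping of constants through these successive rescalings (and checking that $\cL$ really is preserved, using the $\s$-scaling computation in Section~\ref{Subsubsec:Hypothesis}) to be the main technical obstacle, but no new idea beyond \cite{Caffarelli09} is needed.
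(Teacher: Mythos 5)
Your proposal is correct and takes essentially the same route the paper intends: the paper's own proof is the single remark that the theorem follows from Corollary \ref{coro:Discrete_point_estimate} ``by rescaling the previous result and a standard covering argument'', and your normalization, use of the scale invariance of $\cL$ (with the right-hand side only improving under contractions), conversion of the decay at levels $M^k$ into the power bound $Ct^{-\e}$, and covering of $B_{1/4}$ by rescaled unit-cube configurations are precisely that argument. The one step you compress --- verifying $\inf_{Q_3}\bar u\leq 1$ for covering cubes far from the near-infimum point $x_*$, which requires the overlapping-cube iteration at levels $M^{k_0}$ that you call ``Harnack-chain-type'' --- is exactly the part the paper likewise delegates to the standard references.
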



\subsection{Regularity}

By having a Point estimate as above we also have H\"older regularity for solutions of equations which are elliptic with respect to $\cL$ by applying a diminish of oscillation argument. We refer to Lemma 12.2 in \cite{Caffarelli09} and Theorem 25 in \cite{Caffarelli11} for the proof. Again we notice that for the referred proofs to work in our setting it is fundamental that $\cL$ remains invariant by dilations.

\begin{theorem}[H\"older regularity]\label{thm:holder}
Consider $f$ a bounded continuous functions, $I$ and continuous elliptic operator with respect to $\cL$ (see Definition \ref{def:Ellipticity} and the hypothesis in Section \ref{Subsubsec:Hypothesis} imposed on $\cL$). Let $u$ be the viscosity solution of the Dirichlet problem,
\begin{align*}
Iu &= f \text{ in $B_1$},\\
u &= g \text{ in $\R^n \sm B_1$}.
\end{align*}
Then $u \in C^\a(B_{1/2})$ for some universal $\a\in(0,1)$ and satisfies,
\begin{align*}
\|u\|_{C^\a(B_{1/2})} \leq C(\|u\|_{L^\8(B_1)} + \|g\|_{L^1(\w_\s)} + \|f\|_{L^\8(B_1)} + \|I0\|_{L^\8(B_1)}),
\end{align*}
for some universal $C$.
\end{theorem}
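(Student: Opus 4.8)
The plan is to derive the Hölder modulus of continuity from the Point estimate (Theorem~\ref{thm:Point_estimate}) via the classical diminish of oscillation argument of Krylov and Safonov, adapted to the integro-differential setting exactly as in Section~12 of \cite{Caffarelli09} and Section~5 of \cite{Caffarelli11}. First I would reduce to the homogeneous situation: replacing $u$ by $u - I0$ estimates and using the ellipticity of $I$ with respect to $\cL$ one gets $\cM^-_\cL u \leq f - I0$ and $\cM^+_\cL u \geq f - I0$ in the viscosity sense (these follow from Definition~\ref{def:Ellipticity} applied to the pair $(u,0)$, and are available as soon as one knows $Iu$ makes sense, which is guaranteed since $u$ solves the Dirichlet problem and $I$ is a continuous operator). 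After rescaling, one may assume $\|u\|_{L^\infty(B_1)}\le 1$, $\|f\|_{L^\infty(B_1)}\le\eta$ and $\|I0\|_{L^\infty(B_1)}\le\eta$ for a small $\eta$ to be fixed; the general estimate is then recovered by dividing by $\|u\|_{L^\infty(B_1)} + \|g\|_{L^1(\w_\s)} + \|f\|_{L^\infty(B_1)} + \|I0\|_{L^\infty(B_1)}$.

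Next I would establish the key diminish of oscillation lemma: there is a universal $\theta\in(0,1)$ and a small $\eta>0$ such that if $\operatorname{osc}_{B_1}u\le 1$, $\|f^+\|_\infty\le\eta$, $\|I0\|_\infty\le\eta$, and $u$ satisfies the two extremal inequalities, then $\operatorname{osc}_{B_{1/2}}u\le 1-\theta$, \emph{provided} one also controls a tail quantity measuring how far $u$ deviates from $[0,1]$ outside $B_1$ (this is the role of the $L^1(\w_\s)$ norm of $g$ and is what forces the $\|g\|_{L^1(\w_\s)}$ term in the final estimate). The proof of this lemma is the standard dichotomy: in the ball $B_{1/2}$ at least half of the measure lies either in $\{u\le 1/2\}$ or in $\{u\ge 1/2\}$; in the first case apply the Point estimate to $v = u - \inf_{B_1}u$ (nonnegative on a large ball after a further truncation/localization handled by the barriers already built, with the tail contribution absorbed into $f$), in the second case apply it to $1-v$ using the $\cM^+_\cL$ inequality. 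Either way Theorem~\ref{thm:Point_estimate} yields that $u$ cannot be too close to both endpoints of its oscillation interval on a set of full measure, which forces the oscillation to contract by a fixed factor.

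Finally I would iterate the diminish of oscillation at the dyadic scales $B_{2^{-k}}(x_0)$ for each $x_0\in B_{1/2}$, crucially using that the family $\cL$ is scale invariant of order $\s$ (as verified in Section~\ref{Subsubsec:Hypothesis}): the rescaled function $\tilde u(x) = 2^{k\a}\bigl(u(2^{-k}x + x_0) - \text{(center value)}\bigr)$ again satisfies extremal inequalities with respect to $\cL$, with right hand side $2^{k(\a-\s)}$ times the original, and since $\a$ will be chosen so that $\a<\s$ the right hand side only shrinks as $k$ grows, so the hypotheses of the diminish of oscillation lemma are preserved at every scale. Summing the geometric series $\operatorname{osc}_{B_{2^{-k}}(x_0)}u \le C(1-\theta)^k$ gives the Hölder bound with exponent $\a = \log_2\frac{1}{1-\theta}$ (truncated below $\s$ if necessary), and tracking the constants through the reduction yields the stated dependence on $\|u\|_{L^\infty(B_1)} + \|g\|_{L^1(\w_\s)} + \|f\|_{L^\infty(B_1)} + \|I0\|_{L^\infty(B_1)}$. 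The main obstacle is the bookkeeping of the nonlocal tail: unlike the second order case, the hypotheses of the Point estimate ($u\ge0$ on all of $\R^n$, the renormalized setup of Section~4.1) must be re-established at each scale, so one must carefully pass the exterior behavior of $u$ into the right hand side via the barrier functions of Lemmas~\ref{lemma:Barrier2} and~\ref{lemma:Barrier3} and check that this extra source term stays below the threshold $\eta$ uniformly in $k$ and in $\s\in[1,2)$; this is precisely the point where the scale invariance of order $\s$ of $\cL$ and the uniform-in-$\s$ constants from Lemma~\ref{lemma:limit_to_the_classic} are indispensable.
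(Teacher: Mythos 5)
Your proposal is correct and follows essentially the same route as the paper, which for this theorem simply invokes the point estimate (Theorem \ref{thm:Point_estimate}) together with the diminish of oscillation and dyadic iteration arguments of Lemma 12.2 in \cite{Caffarelli09} and Theorem 25 in \cite{Caffarelli11}, stressing exactly the point you stress, namely that the scale invariance of $\cL$ (and uniformity in $\s$) lets those proofs go through; the only small imprecision is that the exterior tail is absorbed into the right hand side by a direct estimate against the extremal kernel $K^+$ after truncation (choosing $\a$ small), not via the barriers of Lemmas \ref{lemma:Barrier2} and \ref{lemma:Barrier3}, which enter only inside the point estimate itself.
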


Also for translation invariant equations we recover $C^{1,\a}$ regularity by considering incremental quotients of the solution. We also refer to Lemma 13.1 in \cite{Caffarelli09} and Theorem 27 in \cite{Caffarelli11} for the proof. In this case we need to add an additional assumption over the family of kernels. Let $\cK_1 \ss \cK$ defined by the additional restriction that for some fixed constant $C$, $|DK| \leq C|y|^{n+\s+1}$. Let $\cL_1 \ss \cL$ be the family of linear operators such that for every $L = L_K + b\cdot D$ we have that $K \in \cK_1$. 

\begin{theorem}[Regularity for translation invariant equations]\label{thm:holder_gradient}
Consider $f$ a bounded continuous functions, $I$ a translation invariant continuous elliptic operator with respect to $\cL_1$ (see Definition \ref{def:Ellipticity} and the hypothesis in Section \ref{Subsubsec:Hypothesis} imposed on $\cL \supseteq \cL_1$). Let $u$ be the viscosity solution of the Dirichlet problem,
\begin{align*}
Iu &= f \text{ in $B_1$},\\
u &= g \text{ in $\R^n \sm B_1$}.
\end{align*}
Then $u \in C^\a(B_{1/2})$ for some universal $\a\in(0,1)$ and satisfies,
\begin{align*}
\|u\|_{C^{1,\a}(B_{1/2})} \leq C(\|u\|_{L^\8(B_1)} + \|g\|_{L^1(\w_\s)} + \|f\|_{L^\8(B_1)} + \|I0\|_{L^\8(B_1)}),
\end{align*}
for some universal $C$.
\end{theorem}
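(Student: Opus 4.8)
The plan is to derive Theorem \ref{thm:holder_gradient} by the incremental quotient method, following the template of Lemma 13.1 in \cite{Caffarelli09} and Theorem 27 in \cite{Caffarelli11}, but checking that the extra gradient term $b\cdot D$ and the non-symmetric kernels do not break the argument. First I would reduce to a normalized situation: by the linear rescaling $u \mapsto u/(\|u\|_{L^\8(B_1)} + \|g\|_{L^1(\w_\s)} + \|f\|_{L^\8(B_1)} + \|I0\|_{L^\8(B_1)})$ we may assume all these quantities are bounded by one, and by the already-established Theorem \ref{thm:holder} we know $u \in C^\a(B_{3/4})$ with the corresponding estimate, which in particular gives $u \in L^1(\w_\s)$ control on all translates. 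The key object is the incremental quotient in direction $e$ at scale $h$,
\begin{align*}
v_{h,e}(x) = \frac{u(x+he) - u(x)}{|h|^\g},
\end{align*}
for a suitable $\g \in (\a, \min(2\a,1))$; because $I$ is translation invariant, $u(\cdot + he)$ solves $Iw = f(\cdot + he)$, and by Theorem \ref{Eqdiff} (the equation for the difference of solutions) $v_{h,e}$ satisfies $\cM^-_{\cL_1} v_{h,e} \le |h|^{-\g}(f(\cdot+he)-f)$ and $\cM^+_{\cL_1} v_{h,e} \ge |h|^{-\g}(f(\cdot+he)-f)$ in $B_{3/4}$ in the viscosity sense.

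Next I would run the standard iteration: since $u\in C^\a$, the $C^\a$ seminorm controls $\|v_{h,e}\|_{L^\8}$ on a slightly smaller ball whenever $\g \le \a$, but we want $\g > \a$, so the iteration must bootstrap. One applies the point estimate (Theorem \ref{thm:Point_estimate}), or rather the full Harnack/oscillation consequence of it, to $v_{h,e}$ on a sequence of dyadic balls $B_{2^{-k}}(x_0)$; at each scale the $L^1(\w_\s)$ tail of $v_{h,e}$ is controlled by the $C^\g$ bound already obtained at the previous scale together with the global $C^\a$ bound, and the right-hand side $|h|^{-\g}(f(\cdot+he)-f)$ contributes a term that is harmless once we absorb it using that $f$ is bounded (here if $f$ were merely bounded one uses $\g$ small enough; if one wants to exploit continuity of $f$ more is available but not needed). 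The diminish-of-oscillation at each scale, uniform in $h$ and $e$, yields a uniform bound $\|v_{h,e}\|_{C^{\a}(B_{1/2})} \le C$ independent of $h$, which by the standard characterization of Hölder spaces via incremental quotients upgrades to $u \in C^{\g+\a}$; iterating a finite number of times (the number depending only on $\a$) pushes past $1$, at which point the quotients converge and we obtain $Du \in C^\a(B_{1/2})$ with the asserted estimate.

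The main obstacle I expect is controlling the behavior of the incremental quotients under the non-symmetric rescaling at small scales, precisely the issue flagged in the introduction: when we dilate $v_{h,e}$ to unit scale to apply Theorem \ref{thm:Point_estimate}, the family $\cL_1$ must remain invariant, and the extra gradient term $\tilde b = r^{\s-1}(b + \int_{B_1\sm B_r} yK(y)dy)$ generated by the dilation must stay within the allowed class, which is exactly guaranteed by hypothesis \ref{hypothesis_odd} and the scaling computation already carried out in Section \ref{Subsubsec:Hypothesis}. The additional restriction $|DK| \le C|y|^{n+\s+1}$ defining $\cK_1$ enters because the incremental quotient of the kernel, $h^{-1}(K(y) - K(y+h e)) $ type terms appearing when one differentiates $L_K u$, must themselves define admissible kernels uniformly in $h$; one checks $\cK_1$ is closed under this operation in the limit, just as in \cite{Caffarelli09}. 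Apart from this bookkeeping, every analytic ingredient — the point estimate, the equation for differences, stability, and dilation invariance of $\cL_1$ — has been established above, so the remaining work is the routine (if lengthy) iteration, and I would refer the reader to \cite[Lemma 13.1]{Caffarelli09} and \cite[Theorem 27]{Caffarelli11} for the details, emphasizing only the two points above where the gradient term and the asymmetry require the modifications proved in this paper.
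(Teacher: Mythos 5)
Your overall route---incremental quotients of order $\gamma$, the equation for differences (Theorem \ref{Eqdiff}), the interior H\"older estimate applied at every dyadic scale, and the dilation invariance of $\cL_1$ checked via the computation of Section \ref{Subsubsec:Hypothesis}---is the same one the paper intends, since the paper itself simply defers to Lemma 13.1 of \cite{Caffarelli09} and Theorem 27 of \cite{Caffarelli11} and highlights exactly the two points you highlight (scale invariance of the class and the restricted family $\cK_1$). However, one step of your sketch fails as written: you claim that at each scale the $L^1(\w_\s)$ tail of $v_{h,e}$ is controlled by ``the global $C^\a$ bound''. There is no global $C^\a$ bound: Theorem \ref{thm:holder} is an interior estimate, and outside $B_1$ the solution coincides with the boundary datum $g$, which is merely in $L^1(\w_\s)$. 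Hence $\|v_{h,e}\|_{L^1(\w_\s)}$ is of size $|h|^{-\gamma}\|g\|_{L^1(\w_\s)}$ and blows up as $h\to 0$, so the point estimate / diminish of oscillation cannot be applied to $v_{h,e}$ with tails that are uniform in $h$.

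This is precisely where the hypothesis on $\cK_1$ enters, and its role is different from the one you assign to it (it is not that the ``differentiated kernels'' must remain admissible members of the class; note also that the bound should read $|DK|\leq C|y|^{-(n+\s+1)}$). The standard fix, which is the content of the cited proofs, is to cut off the increment far from the ball where one works and to rewrite the far-field contribution of $u(\cdot+he)-u(\cdot)$ tested against $K$ as an integral of $u$ itself against $K(\cdot)-K(\cdot-he)$; the $C^1$ bound on the kernels away from the origin then controls this term by $C|h|\,\|u\|_{L^1(\w_\s)}$, which after division by $|h|^{\gamma}$ with $\gamma\leq 1$ stays bounded and is absorbed into the right-hand side. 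The drift part creates no analogous difficulty since $b\cdot D$ is translation invariant, so no incremental quotient of $b$ appears; this is already encoded in Theorem \ref{Eqdiff}. With that correction, the rest of your iteration (raising the exponent of the quotients by $\a$ at each step until it exceeds one and then passing to the limit in $h$) is the routine argument of \cite{Caffarelli09,Caffarelli11}, and your verification that the rescaled operators stay in $\cL_1$ is indeed the only point where the lack of symmetry requires the structure developed in this paper.
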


{\bf Acknowledgment:} The author would like to thank Luis Caffarelli for proposing the problem and for various useful discussions.

\bibliographystyle{plain}
\bibliography{mybibliography}

\end{document}